\numberwithin{equation}{section}
\numberwithin{figure}{section}
\theoremstyle{plain}
\newtheorem{thm}{\protect\theoremname}[section]
  \theoremstyle{plain}
  \newtheorem{prop}[thm]{\protect\propositionname}
  \theoremstyle{plain}
  \newtheorem{lem}[thm]{\protect\lemmaname}
  \theoremstyle{definition}
  \newtheorem{defn}[thm]{\protect\definitionname}
  \theoremstyle{plain}
  \newtheorem{cor}[thm]{\protect\corollaryname}
  \theoremstyle{remark}
  \newtheorem*{rem*}{\protect\remarkname}
  \providecommand{\corollaryname}{Corollary}
  \providecommand{\definitionname}{Definition}
  \providecommand{\lemmaname}{Lemma}
  \providecommand{\propositionname}{Proposition}
  \providecommand{\remarkname}{Remark}
\providecommand{\theoremname}{Theorem}
\begin{document}

\title{Induction and restriction of $(\varphi,\Gamma)$-modules}
\begin{abstract}
Let $L$ be a non-archimedean local field of characteristic 0. We
present a variant of the theory of $(\varphi,\Gamma)$-modules associated
with Lubin-Tate groups, developed by Kisin and Ren \cite{=00005BKi-Re=00005D},
in which we replace the Lubin-Tate tower by the maximal abelian extension
$\Gamma=Gal(L^{ab}/L).$ This variation allows us to compute the functors
of induction and restriction for $(\varphi,\Gamma)$-modules, when
the ground field $L$ changes. We also give a self-contained account
of the Cherbonnier-Colmez theorem on overconvergence in our setting.
\end{abstract}

\author{Ehud de Shalit and Gal Porat}

\address{Hebrew University, Jerusalem, Israel}

\email{ehud.deshalit@mail.huji.ac.il, galporat1@gmail.com}

\maketitle
Let $L$ be a finite extension of the field $\mathbb{Q}_{p}$. Let
$\mathcal{O}$ denote its ring of integers, $\kappa$ its residue
field, $q$ the cardinality of $\kappa,$ $G=Gal(\overline{L}/L)$
the absolute Galois group, and $\mathrm{Rep}{}_{\mathcal{O}}(L)$
the category of finitely generated $\mathcal{O}$-modules equipped
with a continuous $G$-action. In order to study $\mathrm{Rep_{\mathcal{O}}}(L)$
Fontaine introduced in \cite{=00005BFo=00005D} the category $\mathrm{Mod}{}_{\varphi,\Gamma}(\mathbf{A}_{L})$
of $(\varphi,\Gamma)$-modules over the ``period ring'' $\mathbf{A}_{L}$.
He then constructed a functorial equivalence between the rather complicated
category $\mathrm{Rep_{\mathcal{O}}}(L)$, and the seemingly simpler
full subcategory $\mathrm{Mod}{}_{\varphi,\Gamma}^{\acute{e}t}(\mathbf{A}_{L})$
of $\mathrm{Mod}{}_{\varphi,\Gamma}(\mathbf{A}_{L})$ consisting of
\emph{étale} $(\varphi,\Gamma)$-modules.

The ring $\mathbf{A}_{L}$ and the category $\mathrm{Mod}{}_{\varphi,\Gamma}(\mathbf{A}_{L})$
depend on the choice of a certain $\Gamma$-extension $L_{\infty}/L$,
which in Fontaine's work, and in most of the applications that followed,
was taken to be the cyclotomic $\mathbb{Z}_{p}$-extension of $L$.
Kisin and Ren \cite{=00005BKi-Re=00005D} introduced a variant, in
which $L_{\infty}/L$ is the extension generated by the torsion points
of a Lubin-Tate group $\mathcal{G}$ defined over $\mathcal{O}.$
Using this new variant they were able to generalize results of Berger,
Colmez and Wach on crystalline representations, which were previously
available only for $L$ unramified over $\mathbb{Q}_{p}$ and $L_{\infty}$
its cyclotomic extension.

A full account of the Kisin-Ren theory was provided in a recent monograph
\cite{=00005BSchn=00005D} of Schneider. Among other things, this
book substitutes Scholze's notion of ``tilting'' for the original
``field of norms'' approach. However, the constructions in both
these references depend strongly on the Lubin-Tate group law, and
in particular on the choice of a uniformizer of $L$. This makes it
difficult to express the functors of restriction and induction in
the language of $(\varphi,\Gamma)$-modules, when we let the base
field $L$ change. Similar difficulties, arising from the incompatibility
of Lubin-Tate theory with base-field extensions, were encountered
in Iwasawa's development of local class field theory \cite{=00005BIw=00005D},
and in the generalization of local class field theory to meta-abelian
extensions studied in \cite{=00005BK-dS=00005D}.

\bigskip{}

The goal of this note is to present yet another variant of the Kisin-Ren
theory, in which the extension $L_{\infty}/L$ is replaced by the
maximal abelian extension $L^{ab}/L.$ Thus $\Gamma=Gal(L^{ab}/L).$
As such, no choice of a uniformizer or a Lubin-Tate group is involved,
and the ambiguity mentioned above is resolved. For our purpose it
is also necessary to replace the fields of norms of Fontaine and Wintenberger
by their \emph{completed perfections }which we shall denote by the
letter $F$. More precisely, if the field of norms of a Lubin-Tate
tower is (non-canonically) isomorphic to $\kappa((\omega)),$ then
our $F$ will be isomorphic to the completion of the perfection of
$\overline{\kappa}((\omega))$. Intrinsically, $F$ is defined to
be the tilt $(\widehat{L^{ab}})^{\flat}$ of the completion $\widehat{L^{ab}}$
of $L^{ab}.$ Such ``complete perfections'' of norm-fields were
already shown to be useful in the work of Cherbonnier and Colmez \cite{=00005BCh-Co=00005D}
and Kedlaya and Liu \cite{=00005BKe-L=00005D}\cite{=00005BKe=00005D},
who nevertheless avoided the extension of scalars from $\kappa$ to
$\overline{\kappa}$, and did not consider these objects in the Lubin-Tate
setting. The coefficient ring for our $(\varphi,\Gamma)$-modules
is modified accordingly. As $F$ is perfect, one can simply take as
coefficients the ring $\widetilde{\mathrm{\mathbf{A}}}_{L}=W(F)_{L}$
of Witt vectors of $F$, tensored over $W(\kappa)$ with $\mathcal{O}$,
instead of the smaller $\mathrm{\mathbf{A}}_{L}$, whose construction
would require further work.

The structure of $\mathbf{\widetilde{A}}_{L}$ and the new category
$\mathrm{Mod}_{\varphi,\Gamma}(L)$ of $(\varphi,\Gamma)$-modules
over it are elucidated in §1. The main theorem on the equivalence
between $\mathrm{Rep}{}_{\mathcal{O}}(L)$ and the full subcategory
$\mathrm{Mod}_{\varphi,\Gamma}^{\acute{e}t}(L)$ is given in §2. These
two sections repeat well-known results. As we follow \cite{=00005BSchn=00005D}
closely, proofs are omitted. Once we have established the new variant,
the computations of the functors of induction and restriction are
straightforward, and are given in §3. The main result concerning these
functors is Theorem \ref{thm:Ind-Res}.

We remark that for ``cyclotomic $(\varphi,\Gamma)$-modules'', i.e.
when $\Gamma$ is the cyclotomic extension, a similar result was obtained
in Liu's thesis, in the framework of $(\varphi,\Gamma)$-modules over
the Robba ring, \emph{cf.} \cite{=00005BLiu=00005D}, Proposition
2.1. Recall that the Robba ring does not admit an integral structure,
and the appropriate $(\varphi,\Gamma)$-modules over the Robba ring
classify $L$-vector space representations of $G$ rather than $\mathcal{O}$-module
representations. More importantly, in the Kisin-Ren setting, the fields
of norms attached to two Lubin-Tate towers over $L_{1}$ and $L_{2}$,
where $L_{1}\subset L_{2}$ is a finite extension, are not comparable,
even if the associated primes are. See \cite{C-E}. Replacing the
fields of norms by $F=(\widehat{L^{ab}})^{\flat}$ is essential for
the inclusion $\widetilde{\mathrm{\mathbf{A}}}_{L_{1}}\subset\widetilde{\mathrm{\mathbf{A}}}_{L_{2}}$,
without which one can not proceed. 

\medskip{}

Besides compatibility with induction and restriction, working with
$\widetilde{\mathrm{\mathbf{A}}}_{L}$ as coefficients instead of
$\mathbf{A}_{L}$ (and the full abelian extension instead of the cyclotomic
or Lubin-Tate tower) has another advantage over the Kisin-Ren modules
studied in \cite{=00005BKi-Re=00005D} and \cite{=00005BSchn=00005D}.
As observed in \cite{=00005BFou-Xie=00005D} and \cite{=00005BBe-Fou=00005D},
the Cherbonnier-Colmez theorem, asserting that étale $(\varphi,\Gamma)$-modules
are overconvergent \cite{=00005BCh-Co=00005D}, no longer holds for
$\mathbf{A}_{L}$-modules in the Lubin-Tate setting, as soon as $L\ne\mathbb{Q}_{p}.$
This has to do with the fact that now $\Gamma\simeq\mathcal{O}_{L}^{\times}$
rather than $\mathcal{\mathbb{Z}}_{p}^{\times}$, and is related to
the question of $L$-analyticity. See the discussion in \cite{=00005BSchn=00005D}
§4.3. In our setting, the Cherbonnier-Colmez theorem \emph{does hold}. 

In §4 we introduce the ring of overconvergent periods $\mathbf{\widetilde{A}}_{L}^{\dagger}$
and the category $\mathrm{Mod}_{\varphi,\Gamma}^{\dagger}(L)$ of
overconvergent $(\varphi,\Gamma)$-modules. Just as $\mathbf{A}_{L}^{\dagger}$
can be realized as a ring of power series converging in some annulus
$R<|X|<1,$ $\mathbf{\widetilde{A}}_{L}^{\dagger}$ can be realized
as a ring of power series converging in some \emph{(pre)perfectoid
}annulus. Base change from $\mathbf{\widetilde{A}}_{L}^{\dagger}$
to $\widetilde{\mathrm{\mathbf{A}}}_{L}$ induces an equivalence of
categories between the full subcategory $\mathrm{Mod}_{\varphi,\Gamma}^{\dagger\acute{e}t}(L)$
of overconvergent étale $(\varphi,\Gamma)$-modules, and $\mathrm{Mod}_{\varphi,\Gamma}^{\acute{e}t}(L)$.
In §4 we give an elementary, self-contained proof of this result.
Once again we are motivated by Kedlaya's paper \cite{=00005BKe=00005D},
but the reader familiar with his proof will notice that we prove the
basic lemmas directly over $L$, rather than after a base change $L'/L$
trivializing the $(\varphi,\Gamma)$-module modulo $p,$ thus avoiding
the need for Galois descent. As explained by Kedlaya, if the field
of norms is replaced by its completed perfection and the ring $\mathbf{A}_{L}$
by $\widetilde{\mathrm{\mathbf{A}}}_{L}$, then overconvergence can
be deduced solely from an analysis of the matrix representing $\varphi$
(at least if there is no $p$-torsion, but torsion modules are easy
to deal with). The structure of $\Gamma$ intervenes only at the second
stage, when one tries to descend to $\mathbf{A}_{L}$ and its overconvergent
subring. It is this second stage that works for the cyclotomic $\Gamma$
but fails in the Lubin-Tate setting if $L\ne\mathbb{Q}_{p}.$

\medskip{}

As should be clear from the introduction, our improvement over what
is already in the literature is modest, and this paper replicates
ideas and results scattered throughout many of the sources that we
have cited. Nevertheless, the better compatibility with induction
and restriction, as well as with overconvergence, makes one wonder
whether $\mathbf{\widetilde{A}}_{L}$ should not substitute for $\mathbf{A}_{L}$
as the basic period ring. Development of $p$-adic Hodge theory so
far relied on descent from $\mathbf{\widetilde{A}}_{L}$ to $\mathbf{A}_{L}$.
For example, the very definition of the operator $\psi$ relies on
$\varphi$ \emph{not} being bijective, and the study of locally analytic
vectors and $p$-adic differential equations is also conducted over
the \emph{non-perfectoid} Robba ring. However, the predominance of
perfectoid rings in Scholze's work and their many applications, together
with the two observations made above, support such thoughts.

\section{$(\varphi,\Gamma)$-modules over $L$}

\subsection{Tilting}

The following fundamental constructions are due to Fontaine and Scholze.
Let $L$ be a finite extension of $\mathbb{Q}_{p}$ and write $\mathbb{C}_{p}$
for the completion of a fixed algebraic closure $\overline{L}$ of
$L$. If
\[
L\subset K\subset\mathbb{C}_{p}
\]
is any complete intermediate field, we write $K^{\flat}$ for the
collection of sequences
\[
x=(\dots,x_{2},x_{1},x_{0})
\]
where $x_{i}\in K$ and $x_{i+1}^{p}=x_{i}$. We define $x\cdot y$
by component-wise multiplication and $x+y=z$ where
\[
z_{i}=\lim_{j\to\infty}(x_{i+j}+y_{i+j})^{p^{j}}
\]
(the limit exists). We let $|x|_{\flat}=|x_{0}|.$ Then $K^{\flat}$
becomes a field of characteristic $p$ and $|\cdot|_{\flat}$ is a
complete non-archimedean absolute value on $K^{\flat}.$ For example,
$L^{\flat}\simeq\mathbb{F}_{q}$, the residue field of $\mathcal{O}=\mathcal{O}_{L}.$
The field $K^{\flat}\subset\mathbb{C}_{p}^{\flat}$ is called the
\emph{tilt} of $K$.

Recall that $K$ is called a \emph{perfectoid} \cite{=00005BScho=00005D}
if it is in addition non-discrete and every element of the ring $\mathcal{O}_{K}/p\mathcal{O}_{K}$
is a $p$-th power. In this case $K^{\flat}$ is perfect, and can
be identified with the field of fractions of the perfection of $\mathcal{O}_{K}/p\mathcal{O}_{K}$.
The field $\mathbb{C}_{p}$ itself is such a perfectoid.

The group $Aut_{cont}(K/L)$ of continuous automorphisms of $K$ over
$L$ acts by functoriality on $K^{\flat}$.

As an example, consider a Lubin-Tate formal group $\mathcal{G}$ over
$\mathcal{O}$, associated with the prime $\pi$ of $\mathcal{O}$.
We fix a formal parameter $X$ on $\mathcal{G}$ and denote by $[a]$
the endomorphism of $\mathcal{G}$ whose expression in $X$ starts
with $aX+(\mathrm{higher\,terms})$. We let $\phi=[\pi],$ so that
\[
\phi(X)=\pi X+\cdots\equiv X^{q}\mod\pi.
\]

For $n\ge1$ let $\omega_{n}\in\mathcal{G}[\pi^{n}]$ be such that
$\omega_{1}\ne0$ and $[\pi](\omega_{n+1})=\omega_{n}.$ Let $L_{n}=L(\omega_{n})$
and $L_{\infty}=\bigcup L_{n}.$ Its completion $K=\widehat{L_{\infty}}$
is a perfectoid subfield of $\mathbb{C}_{p}$. Since $\omega_{n+1}^{q}\equiv\omega_{n}\mod\pi$,
the choice of $\omega_{n}$ defines an element $\omega\in K^{\flat}$
and $\kappa((\omega))\subset K^{\flat}$ where $\mathbb{\kappa=\mathcal{O}}/\pi\mathcal{O}\simeq\mathbb{F}_{q}.$
To be precise, letting $q=p^{f},$ $\omega$ is the unique element
$x\in K^{\flat}$ in which $x_{nf}\equiv\omega_{n+m}^{q^{m}}\mod p$
for large enough $m$. Moreover (\cite{=00005BSchn=00005D}, Prop.
1.4.17), $K^{\flat}$ is the completion of the perfection of $\kappa((\omega)).$
Thus elements of $K^{\flat}$ are formal power series $\sum a_{m}\omega^{m}$
where $m\in\mathbb{Z}[p^{-1}],$ $a_{m}\in\kappa$ and for any real
number $M$ there are only finitely many $m\in\mathbb{Z}[p^{-1}]$
with $m<M$ and $a_{m}\ne0.$ The field $\kappa((\omega))$ is called
the \emph{field of norms} of the extension $L_{\infty}/L,$ and is
independent of the choice of $\omega$. Note that
\[
|\omega|_{\flat}=|\pi|^{q/(q-1)}.
\]

\bigskip{}

In this work, however, we consider $L^{ab}=L^{nr}L_{\infty},$ the
maximal abelian extension of $L$ in $\overline{L}$, and let
\[
K=\widehat{L^{ab}},\,\,\,\,\,\,F=K^{\flat}.
\]
\begin{prop}
(i) The field $K$ is a perfectoid.

(ii) The field $F$ can be identified with the field of formal power
series $\sum a_{m}\omega^{m}$ where $m\in\mathbb{Z}[p^{-1}],$ $a_{m}\in\overline{\kappa}$,
and for any real number $M$ there are only finitely many $m\in\mathbb{Z}[p^{-1}]$
with $m<M$ and $a_{m}\ne0.$ Alternatively, it is the completed perfection
of $\overline{\kappa}((\omega)).$
\end{prop}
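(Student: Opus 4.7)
The plan is to identify $L^{ab}$ explicitly as the maximal unramified extension of $L_{\infty}$, and then transport both parts of the proposition to known facts about $\widehat{L_{\infty}}$ and its tilt. By local class field theory, the Artin reciprocity map identifies $Gal(L^{ab}/L)$ with the profinite completion $\widehat{L^{\times}} \cong \mathcal{O}^{\times} \times \widehat{\pi^{\mathbb{Z}}}$. Lubin-Tate theory identifies $L_{\infty}$ with the fixed field of $\widehat{\pi^{\mathbb{Z}}}$, while $L^{nr}$ is the fixed field of $\mathcal{O}^{\times}$. Since these subgroups intersect trivially, $L^{ab} = L^{nr} \cdot L_{\infty}$; and since $L_{\infty}/L$ is totally ramified with residue field $\kappa$, the subextension $L^{ab}/L_{\infty}$ is unramified with residue field $\overline{\kappa}$, so that $L^{ab}$ coincides with the maximal unramified extension $L_{\infty}^{nr}$ of $L_{\infty}$.

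For (i), the field $\widehat{L_{\infty}}$ is perfectoid, as recalled in the excerpt, and its completed maximal unramified extension $K = \widehat{L^{ab}}$ has algebraically closed (hence perfect) residue field $\overline{\kappa}$ and retains the non-discrete value group of $\widehat{L_{\infty}}$. To verify surjectivity of the $p$-th power map on $\mathcal{O}_{K}/p$, I would approximate by elements of $\mathcal{O}_{L^{ab}}/p$: any such element lies in $\mathcal{O}_{L'}/p$ for some finite unramified extension $L'/\widehat{L_{\infty}}$, and because the residue-field extension is perfect and Frobenius is already surjective on $\mathcal{O}_{\widehat{L_{\infty}}}/p$, it remains surjective on $\mathcal{O}_{L'}/p$, hence on $\mathcal{O}_{K}/p$ by density.

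For (ii), functoriality of the tilt applied to the inclusion $\widehat{L_{\infty}} \subset K$ yields an isometric embedding of $\widehat{L_{\infty}}^{\flat}$---the completed perfection of $\kappa((\omega))$---into $F$; combined with the canonical lift of the residue field $\overline{\kappa}$ of $F$, this shows that $F$ contains the completed perfection $E$ of $\overline{\kappa}((\omega))$. For the reverse inclusion, I would invoke Scholze's tilting equivalence for finite \'etale algebras over perfectoid fields: each finite unramified subextension of $K/\widehat{L_{\infty}}$ tilts to the finite unramified extension of $\widehat{L_{\infty}}^{\flat}$ obtained by the corresponding residue-field extension from $\kappa$ into $\overline{\kappa}$, and passing to the completed union of all such layers identifies $F$ with $E$.

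The main obstacle I anticipate is this reverse inclusion in (ii), namely showing cleanly that passage to the maximal unramified extension commutes with both tilting and $p$-adic completion. I would address it either by invoking Scholze's tilting equivalence layer by layer on the finite unramified subextensions of $L^{ab}/L_{\infty}$, or more elementarily by proving directly that every element of $\mathcal{O}_{K}$ is a $p$-adic limit of elements of finite unramified extensions of $\widehat{L_{\infty}}$ and then transferring the resulting power-series description in $\omega$ through the tilt.
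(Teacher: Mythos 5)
Your argument is correct, and it diverges from the paper's in instructive ways. For (i) the paper also exhausts $\mathcal{O}_{K}/p$ by the finite layers $\mathcal{O}_{L_{d}^{nr}L_{\infty}}/p$, but it justifies the surjectivity of Frobenius at each layer by observing that $L_{d}^{nr}L_{\infty}/L$ is arithmetically profinite, hence has perfectoid completion; you instead work over the perfectoid base $\widehat{L_{\infty}}$ and note that Frobenius surjectivity on $\mathcal{O}/p$ is preserved under finite unramified extension because the residue extension is perfect (and the image of Frobenius is a subring, so surjectivity on $\kappa'\otimes_{\kappa}\mathcal{O}_{\widehat{L_{\infty}}}/p$ follows from surjectivity on simple tensors). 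Your route is the more elementary one, as it avoids invoking the APF machinery for the twisted towers $L_{d}^{nr}L_{\infty}/L$, at the mild cost of needing $\mathcal{O}_{L'}/p\cong\kappa'\otimes_{\kappa}\mathcal{O}_{\widehat{L_{\infty}}}/p$ for the unramified layers. For (ii) the paper gives no argument at all, merely asserting that Schneider's Proposition 1.4.17 carries over "with minor modifications"; your sketch via functoriality of the tilt (for the inclusion $E\subseteq F$) and the tilting equivalence for finite \'etale extensions (for the reverse) is a genuine proof strategy and correctly isolates the one delicate point, namely that tilting commutes with completed unions. A clean way to close that gap, in the spirit of Proposition \ref{prop:Perfectoid_basics}: $E$ is a perfectoid subfield of $F$ containing each $K_{d}^{\flat}=\kappa_{d}\cdot\widehat{L_{\infty}}^{\flat}$, so its untilt inside $K$ contains every $L_{d}^{nr}L_{\infty}$, hence is dense in $K$, hence equals $K$ by completeness, forcing $E=F$.
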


\begin{proof}
(i) $K$ is complete and non-discrete. The ring $\mathcal{O}_{K}/p\mathcal{O}_{K}$
is the union of $\mathcal{O}_{L_{d}^{nr}L_{\infty}}/p\mathcal{O}_{L_{d}^{nr}L_{\infty}}$for
$d=1,2,\dots$, where $L_{d}^{nr}$ is the unramified extension of
$L$ of degree $d$. The extension $L_{d}^{nr}L_{\infty}/L$ is arithmetically
profinite (its ramification groups, in the upper numbering, are open),
hence its completion is a perfectoid field, and every element of $\mathcal{O}_{L_{d}^{nr}L_{\infty}}/p\mathcal{O}_{L_{d}^{nr}L_{\infty}}$
is a $p$-th power. It follows that every element of $\mathcal{O}_{K}/p\mathcal{O}_{K}$
is a $p$-th power as well.

(ii) The proof in \cite{=00005BSchn=00005D}, Proposition 1.4.17,
carries over to our case with minor modifications.
\end{proof}
The field $F$ is a non-discrete complete valuation field. Its ring
of integers $\mathcal{O}_{F}$ is the perfection of $\mathcal{O}_{K}/p\mathcal{O}_{K}.$
For $M>0$ we denote by $\mathfrak{m}_{F}^{(M)}$ the ideal of $\mathcal{O}_{F}$
consisting of all $x$ with
\[
|x|_{\flat}\le|\pi|^{Mq/(q-1)}.
\]
 Thus $\mathfrak{m}_{F}^{(M)}$ can be identified with the ideal of
all the formal power series $\sum a_{m}\omega^{m}$ as above with
$a_{m}=0$ for $m<M.$

\medskip{}

The group $\Gamma=Gal(L^{ab}/L)=Aut_{cont}(K/L)$ is isomorphic to
the profinite completion $\widehat{L^{\times}}$ of $L^{\times}.$
We denote by $[a,L^{ab}/L]$ the local Artin symbol of $a\in\widehat{L^{\times}}$
and by $\chi_{L}:\Gamma\simeq\widehat{L^{\times}}$ the character
defined by
\[
\chi_{L}(\gamma)=a\Leftrightarrow\gamma=[a^{-1},L^{ab}/L].
\]
Thus, if $\gamma$ is the geometric Frobenius of $L^{ab}/L_{\infty}$,
$\chi_{L}(\gamma)=\pi.$ Lubin-Tate theory tells us, on the other
hand \cite{=00005BIw=00005D}, that if $\gamma\in Gal(L^{ab}/L^{nr})$,
then $\chi_{L}(\gamma)\in\mathcal{O}_{L}^{\times}$ and
\[
\gamma(\omega_{n})=[\chi_{L}(\gamma)](\omega_{n}).
\]

Let $H=Gal(\overline{L}/L^{ab})=Aut_{cont}(\mathbb{C}_{p}/K)$ and
$G=Gal(\overline{L}/L)=Aut_{cont}(\mathbb{C}_{p}/L)$, so that $G/H=\Gamma.$
Let $F^{sep}$ denote the separable closure of $F$ in $\mathbb{C}_{p}^{\flat}.$
The following Proposition is well-known, see \cite{=00005BScho=00005D}
Theorem 3.7.
\begin{prop}
\label{prop:Perfectoid_basics}(i) The field $F^{sep}$ is algebraically
closed and dense in $\mathbb{C}_{p}^{\flat}.$ 

(ii) The action of $G$ on $\mathbb{C}_{p}^{\flat}$ induces an isomorphism
\[
H\simeq Gal(F^{sep}/F)=Aut_{cont}(\mathbb{C}_{p}^{\flat}/F).
\]

(iii) If $K'$ is a finite extension of $K$ in $\mathbb{C}_{p}$
then $K'$ is a perfectoid, and $[K'^{\flat}:F]=[K':K].$ Every finite
extension of $F$ in $\mathbb{C}_{p}^{\flat}$ is $K'^{\flat}$ for
a unique $K'$ as above, and if $K'$ is Galois, $Gal(K'/K)\simeq Gal(K'^{\flat}/F).$

(iv) (Ax-Sen-Tate theorem) $\mathbb{C}_{p}^{G}=L$ and $\mathbb{C}_{p}^{H}=K.$
Likewise $(\mathbb{C}_{p}^{\flat})^{H}=F.$
\end{prop}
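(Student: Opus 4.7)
The plan is to deduce all four parts from Scholze's tilting equivalence for the perfectoid field $K$, together with the Ax-Sen-Tate theorem. The heart of the matter is (iii): since $K$ is perfectoid by the previous proposition, Scholze's tilting correspondence provides an equivalence of categories between finite extensions $K'$ of $K$ inside $\mathbb{C}_p$ and finite separable extensions $F'$ of $F$ inside $\mathbb{C}_p^\flat$, given by $K'\mapsto K'^\flat$. The perfectoid-ness of $K'$ follows from almost purity: $\mathcal{O}_{K'}/p$ is almost finite \'etale over $\mathcal{O}_K/p$, so Frobenius is surjective modulo $p$ up to an almost-isomorphism, which refines to honest surjectivity after perfection. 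The degree equality $[K'^\flat:F]=[K':K]$ and the identification $Gal(K'/K)\simeq Gal(K'^\flat/F)$ for Galois $K'/K$ are built into the equivalence.

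Next I would derive (i). Inside $\mathbb{C}_p$, the union of all finite extensions $K'/K$ in $\overline{L}$ is $\overline{L}$ itself, which is dense in $\mathbb{C}_p$. Since tilting is compatible with the valuations via $|x|_\flat=|x_0|$, the union of the $K'^\flat$ is dense in $\mathbb{C}_p^\flat$; by (iii), this union is precisely $F^{sep}$. For algebraic closedness, $F$ is perfect (being a tilt), hence so is $F^{sep}$, so any purely inseparable extension is trivial and $F^{sep}$ coincides with its algebraic closure in $\mathbb{C}_p^\flat$.

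For (ii), functoriality of tilting yields a continuous homomorphism $H=Aut_{cont}(\mathbb{C}_p/K)\to Aut_{cont}(\mathbb{C}_p^\flat/F)$. Taking the inverse limit over finite Galois extensions $K'/K$ inside $\overline{L}$ of the isomorphisms $Gal(K'/K)\simeq Gal(K'^\flat/F)$ from (iii) gives $H\simeq Gal(F^{sep}/F)$, and the identification $Gal(F^{sep}/F)=Aut_{cont}(\mathbb{C}_p^\flat/F)$ then follows from the density in (i), since any continuous automorphism of $\mathbb{C}_p^\flat$ fixing $F$ is determined by its restriction to the dense subfield $F^{sep}$.

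Finally, part (iv) combines Ax-Sen-Tate with Galois descent. The equality $\mathbb{C}_p^G=L$ is the classical Ax-Sen-Tate theorem. For $\mathbb{C}_p^H=K$, note that $\overline{L}^H=L^{ab}$ by Galois theory, and Ax-Sen-Tate then identifies $\mathbb{C}_p^H$ with the closure of $L^{ab}$ in $\mathbb{C}_p$, which is $K$ by definition. For $(\mathbb{C}_p^\flat)^H=F$, one may either tilt the previous equality, using that $H$-invariants commute with tilting (a consequence of (ii) and the compatibility of functors), or apply directly the characteristic-$p$ form of Ax-Sen-Tate to $\mathbb{C}_p^\flat$ realized as the completion of $F^{sep}$ via (i). The main obstacle in this plan is (iii): the perfectoid degree equality is the nontrivial input and requires the almost purity theorem, so I expect to cite Scholze's Theorem 3.7 directly rather than reprove it.
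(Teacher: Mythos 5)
Your proposal is correct and matches the paper, which offers no proof of its own and simply cites Scholze's Theorem 3.7; your sketch is a faithful unpacking of exactly that result (tilting equivalence plus almost purity for (iii), density and perfectness for (i), inverse limits for (ii), Ax--Sen--Tate for (iv)), and you rightly identify that the degree equality in (iii) is the genuinely deep input to be cited rather than reproved.
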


If $\alpha\in F$ we shall sometimes write $|\alpha|$ for $|\alpha|_{\flat}$,
to ease the notation. We also write $\varphi(\alpha)=\alpha^{q}$
for the Frobenius automorphism of order $q$. If $\alpha\ne0$
\[
\lim_{n\to\infty}|\varphi^{-n}(\alpha)|=1.
\]

If $A=(\alpha_{ij})\in M_{d}(F)$ we let $|A|=\max\{|\alpha_{ij}|\}.$
Then
\[
|A+B|\le\max\{|A|,|B|\},\,\,\,\,|AB|\le|A||B|.
\]

The following technical Lemma on matrices will be needed in the section
on overconvergence. The reader interested only in the formulas for
induction and restriction of $(\varphi,\Gamma)$-modules, can skip
it.
\begin{lem}
\label{lem: estimate}Let $A\in GL_{d}(F)$ be given. Then there exists
a constant $c$, depending only on $A$, so that for every $B\in M_{d}(F)$
there exist $U,V\in M_{d}(F)$ with $|V|\le c$ such that
\[
A^{-1}\varphi(U)A-U=B-V.
\]
\end{lem}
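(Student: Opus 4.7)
The plan is to solve the equation $A^{-1}\varphi(U)A-U=B-V$ by an iterative procedure that uses the perfectness of $F$ (so that $\varphi^{-1}$ is available on $M_{d}(F)$) to progressively shrink the obstruction, stopping as soon as it falls below the prescribed bound $c$. Set $\sigma(X):=A^{-1}\varphi(X)A$ and $K:=|A|\cdot|A^{-1}|\ge 1$, and take any real number $c>K^{1/(q-1)}$; this depends only on $A$, as required.

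If $|B|\le c$ the claim is trivial with $U=0$ and $V=B$, so assume $|B|>c$. Put $B_{0}:=B$ and inductively define
\[
U_{n+1}:=\varphi^{-1}(AB_{n}A^{-1}),\qquad B_{n+1}:=U_{n+1}.
\]
By construction $\sigma(U_{n+1})=B_{n}$, and a telescoping sum gives
\[
\sigma(U_{1}+\cdots+U_{N})-(U_{1}+\cdots+U_{N})=\sum_{i=1}^{N}(B_{i-1}-B_{i})=B-B_{N},
\]
so that $U:=U_{1}+\cdots+U_{N}$ and $V:=B_{N}$ solve the required identity, provided we can arrange $|B_{N}|\le c$.

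The quantitative step is the estimate $|B_{n+1}|\le (K|B_{n}|)^{1/q}$, which follows from $|\varphi^{-1}(X)|=|X|^{1/q}$ applied entrywise. The real-variable map $f(t):=K^{1/q}t^{1/q}$ is increasing and concave with unique fixed point $t^{*}=K^{1/(q-1)}<c$, satisfies $f(t)<t$ for $t>t^{*}$, and preserves the interval $[t^{*},\infty)$. Hence the sequence $|B_{n}|$ is monotonically non-increasing and bounded below by $t^{*}$, so it converges to $t^{*}$; in particular $|B_{N}|\le c$ for some finite $N$, which I would fix to obtain $U$ and $V$.

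The only potential obstacle is verifying the norm recursion and that it forces $|B_{n}|\le c$ in finitely many steps, an elementary one-dimensional monotone-convergence argument. Perfectness of $F$ is used decisively when inverting $\varphi$ entrywise; this is one of the places where the replacement of the field of norms $\kappa((\omega))$ by its completed perfection in our setup becomes indispensable.
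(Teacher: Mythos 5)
Your construction is the paper's own proof in recursive form: unwinding $U_{n+1}=\varphi^{-1}(AB_{n}A^{-1})$ gives exactly the finite sum $U=\sum_{i=1}^{N}\varphi^{-1}(A)\cdots\varphi^{-i}(A)\cdot\varphi^{-i}(B)\cdot\varphi^{-i}(A)^{-1}\cdots\varphi^{-1}(A)^{-1}$ used there, with the same tail term $V=B_{N}$ and the same constant $c>(|A||A^{-1}|)^{1/(q-1)}$. One small wording correction: since you only have the inequality $|B_{n+1}|\le f(|B_{n}|)$, the sequence $|B_{n}|$ itself need not be monotone or converge to $t^{*}$; rather it is dominated (because $f$ is increasing) by the orbit $t_{n+1}=f(t_{n})$, $t_{0}=|B|$, which decreases to $t^{*}<c$, and that suffices to get $|B_{N}|\le c$ for some finite $N$.
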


\begin{proof}
Let
\[
U=\sum_{i=1}^{N}\varphi^{-1}(A)\varphi^{-2}(A)\cdots\varphi^{-i}(A)\cdot\varphi^{-i}(B)\cdot\varphi^{-i}(A)^{-1}\cdots\varphi^{-2}(A)^{-1}\varphi^{-1}(A)^{-1}.
\]
Then
\[
A^{-1}\varphi(U)A-U=B-V
\]
where
\[
V=\varphi^{-1}(A)\varphi^{-2}(A)\cdots\varphi^{-N}(A)\cdot\varphi^{-N}(B)\cdot\varphi^{-N}(A)^{-1}\cdots\varphi^{-2}(A)^{-1}\varphi^{-1}(A)^{-1}.
\]
Now
\[
|\varphi^{-1}(A)\varphi^{-2}(A)\cdots\varphi^{-N}(A)|\le|A|^{q^{-1}+q^{-2}+\cdots+q^{-N}}
\]
is bounded independently of $N,$ and similarly $|\varphi^{-N}(A)^{-1}\cdots\varphi^{-2}(A)^{-1}\varphi^{-1}(A)^{-1}|.$
On the other hand, by selecting $N$ large enough we can let $|\varphi^{-N}(B)|$
be as close as we want to 1. This concludes the proof, in fact with
any $c>(|A||A^{-1}|)^{1/(q-1)}.$
\end{proof}

\subsection{The coefficient ring}

Consider the usual ring of Witt vectors $W(F).$ It contains the subring
$W(\kappa)$, which is the ring of integers of $L^{0}=L\cap\mathbb{Q}_{p}^{nr}.$
We let
\[
\widetilde{\mathbf{A}}_{L}=W(F)_{L}=\mathcal{O}\otimes_{W(\kappa)}W(F).
\]
The action of $\Gamma$ on $F$ defines an action of $\Gamma$ on
$W(F),$ and as it fixes $W(\kappa)$ point-wise, it extends to $\widetilde{\mathbf{A}}_{L}$
$\mathcal{O}$-linearly. Similarly, letting $\varphi(x)=x^{q}$ be
the Frobenius automorphism of order $q$ of $F,$ we denote by $\varphi$
the induced $\mathcal{O}$-linear automorphism of $\widetilde{\mathbf{A}}_{L}.$
The actions of $\Gamma$ and $\varphi$ on $\widetilde{\mathbf{A}}_{L}$
commute with each other. The structure of $\widetilde{\mathbf{A}}_{L}$
is given in the next Proposition. Let $v_{p}$ be the $p$-adic valuation
on $\mathbb{C}_{p}^{\times},$ normalized by $v_{p}(p)=1$, fix a
Lubin-Tate group $\mathcal{G}$ over $\mathcal{O}$ associated with
the prime $\pi$, and let $L_{\infty}/L$ be its Lubin-Tate tower,
$L_{n}=L(\omega_{n})$ as before.
\begin{prop}
\label{prop:Another look at A_L}(i) Let $W_{L}$ be the completion
of the ring of integers in the maximal unramified extension of $L$,
i.e. $W_{L}=\mathcal{O}\otimes_{W(\kappa)}W(\overline{\kappa}).$
The $\mathcal{O}$-algebra $\widetilde{\mathbf{A}}_{L}$ is isomorphic
to the ring of all power series $\sum a_{m}X^{m}$ where $m\in\mathbb{Z}[p^{-1}],$
$a_{m}\in W_{L}$ and for any real number $M$ there are only finitely
many $m$ with $m<M$ and $v_{p}(a_{m})<M.$ Under reduction modulo
$\pi$ (a prime of $\mathcal{O}$) $X^{m}$ goes to $\omega^{m}$.

(ii) Under the isomorphism in (i), if $\gamma\in Gal(L^{ab}/L^{nr})$
then
\[
\gamma(a_{m})=a_{m},\,\,\,\,\gamma(X)=\lim_{n\to\infty}([\chi_{L}(\gamma)](X^{1/q^{n}}))^{q^{n}},
\]
$Gal(L^{ab}/L_{\infty})\simeq Gal(L^{nr}/L)$ acts on $a_{m}$ via
the natural action on $W_{L}$ and trivially on $X,$ while $\varphi$
acts naturally on $W_{L}$ and
\[
\varphi(X)=X^{q}.
\]
\end{prop}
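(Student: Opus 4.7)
The plan is to realize $X$ as the Teichm\"uller lift of $\omega$ inside $W(F)$, identify $\widetilde{\mathbf{A}}_{L}$ with the claimed ring of formal series by lifting modulo $\pi$ from $F$, and deduce the actions of $\Gamma$ and $\varphi$ from the functoriality of Teichm\"uller. Since $F$ is a perfect field of characteristic $p$, the Witt ring $W(F)$ carries a unique multiplicative Teichm\"uller section $\tau:F\to W(F)$. I set $X:=\tau(\omega)\in W(F)\subset\widetilde{\mathbf{A}}_{L}$; multiplicativity of $\tau$ provides compatible $q^{n}$-th roots $X^{1/q^{n}}=\tau(\omega^{1/q^{n}})$ inside $\widetilde{\mathbf{A}}_{L}$, and gives $\varphi(X)=\tau(\omega^{q})=X^{q}$, which is the last clause of (ii).

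For (i), let $R$ denote the ring of formal series appearing in the statement and let $\Psi:R\to\widetilde{\mathbf{A}}_{L}$ send each $a_{m}X^{m}$ to the evident element; the box-finiteness hypothesis guarantees $\pi$-adic convergence of the image, so $\Psi$ is a well-defined $\mathcal{O}$-algebra map. For surjectivity, given $c\in\widetilde{\mathbf{A}}_{L}$, I would reduce modulo $\pi$, expand the image in $F$ using the structural description of $F$ established above, lift each coefficient to $W_{L}$ via Teichm\"uller for $\overline{\kappa}$, subtract from $c$ to pass into $\pi\widetilde{\mathbf{A}}_{L}$, divide by $\pi$, and iterate; $\pi$-adic completeness of $\widetilde{\mathbf{A}}_{L}$ packages this into a convergent series, and the layerwise bounded-below exponents assemble into the global box-finiteness condition. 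For injectivity, reduction modulo $\pi$ of a hypothetical nonzero element of $\ker\Psi$ would give a nontrivial relation among the $\omega^{m}$ in $F$, which is impossible; then dividing out $\pi$ repeatedly (using $\pi$-torsion-freeness of $\widetilde{\mathbf{A}}_{L}$) drives each $a_{m}$ into $\bigcap_{n}\pi^{n}W_{L}=0$.

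For the Galois formulas in (ii), the key observation is that $\tau$ is functorial, so $\gamma\circ\tau=\tau\circ\gamma$ for every continuous ring automorphism $\gamma$ of $F$, and this lifts to $\widetilde{\mathbf{A}}_{L}$ $\mathcal{O}$-linearly. If $\gamma\in\mathrm{Gal}(L^{ab}/L^{nr})$ then $\gamma$ fixes $\widehat{L^{nr}}$, hence fixes $W_{L}$ pointwise, so $\gamma(a_{m})=a_{m}$; moreover $\gamma(X)=\tau(\gamma(\omega))$, which remains to be matched with the limit formula. For the subgroup $\mathrm{Gal}(L^{ab}/L_{\infty})$: the equality $L^{ab}=L^{nr}L_{\infty}$ together with the standard Lubin-Tate fact $L^{nr}\cap L_{\infty}=L$ yields the isomorphism with $\mathrm{Gal}(L^{nr}/L)$; this subgroup fixes every $\omega_{n}$, hence fixes $\omega\in F$, hence fixes $X=\tau(\omega)$, while acting naturally on the $W_{L}$-part.

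The main obstacle is the limit formula $\gamma(X)=\lim_{n\to\infty}[\chi_{L}(\gamma)](X^{1/q^{n}})^{q^{n}}$. The mechanism is the interplay between the two relevant Frobenii: the Lubin-Tate endomorphism $\phi=[\pi]$ satisfies $\phi(T)\equiv T^{q}\pmod{\pi}$ and commutes with $[\chi_{L}(\gamma)]$, so modulo $\pi$ one has $[\chi_{L}(\gamma)](X^{1/q^{n}})^{q^{n}}\equiv\gamma(\omega)\pmod{\pi}$ for every $n$. Lifting to $\widetilde{\mathbf{A}}_{L}$ requires careful $\pi$-adic bookkeeping: each step of passing from $T^{q}$ to $\phi(T)$ introduces an error divisible by $\pi$, but these errors are pushed deeper at successive $n$ by the same Frobenius-contraction that produces $\omega$ itself as a tilt-limit, and in the limit the corrections vanish $\pi$-adically. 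This is the only step that goes beyond formal Witt-vector manipulation, and it mirrors Colmez-style computations in the construction of Lubin-Tate periods.
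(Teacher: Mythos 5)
Your proposal is correct and follows essentially the same route as the paper: the paper packages your Teichm\"uller-lift-and-successive-approximation argument into the single assertion that the power series ring is a strict $p$-ring with residue ring $F$ (whence a unique isomorphism with $W(F)_{L}$ compatible with reduction mod $\pi$), and part (ii) in both cases rests on $X^{m}=\tau(\omega^{m})$ and functoriality of $\tau$. For the limit formula your mechanism is the right one, and the "errors pushed deeper" step is just the elementary congruence that $a\equiv b\pmod{\pi}$ implies $a^{q^{n}}\equiv b^{q^{n}}\pmod{\pi^{n+1}}$, applied with $a=[\chi_{L}(\gamma)](X^{1/q^{n}})$ and $b=\tau(\gamma(\omega)^{1/q^{n}})$.
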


\begin{proof}
(i) Temporarily, let $\mathscr{A}_{L}$ denote the power series ring
in (i). It is readily checked that $\mathscr{A}_{L}$ is a strict
$p$-ring (in the category of $\mathcal{O}$-algebras) with $\mathscr{A}_{L}/\pi\mathscr{A}_{L}\simeq F$,
the isomorphism sending $a\in W_{L}$ to its reduction modulo $\pi$
and $X^{m}$ to $\omega^{m}.$ This suffices to establish the existence
of a unique isomorphism
\[
\iota:\mathscr{A}_{L}\simeq\widetilde{\mathbf{A}}_{L}
\]
compatible with the given isomorphism after reduction modulo $\pi$.

(ii) If $x\in F$ we let $\tau(x)$ be its Teichmüller representative
in $\widetilde{\mathbf{A}}_{L}.$ For $m\in\mathbb{Z}[p^{-1}]$ we
have $X^{m}=\tau(\omega^{m}).$ The formulae in (ii) are then clear. 
\end{proof}
We remark that in \cite{=00005BSchn=00005D} the Teichmüller representative
$\tau(\omega)$ is modified to obtain $\omega_{\phi}\in\mathbf{A}_{L}$
satisfying $\gamma(\omega_{\phi})=[\chi_{L}(\gamma)](\omega_{\phi})$
and $\varphi(\omega_{\phi})=[\pi](\omega_{\phi}).$ The Cohen ring
$\mathbf{A}_{L}$ is then the $p$-adic completion of $W_{L}((\omega_{\phi})).$
This $\omega_{\phi}$ however can not be raised to a power $m\in\mathbb{Z}[p^{-1}].$

\bigskip{}

The discrete valuation ring $\widetilde{\mathbf{A}}_{L}$ has two
topologies. The \emph{strong} topology is the one given by the valuation.
The \emph{weak} topology is induced on $W(F)$ via the natural bijection
with $F^{\mathbb{N}_{0}}$ from the product topology on the latter.
It is then extended naturally to $W(F)_{L}$, which, as an additive
group, is isomorphic to $W(F)^{[L:L^{0}]}.$ A basis of open neighborhoods
at 0 in the weak topology is given by 
\[
U_{n,m}=\pi^{n}W(F)_{L}+W(\mathfrak{m}_{F}^{(m)})_{L}
\]
for $n,m\ge0.$ The weak topology is a complete Hausdorff topology,
but unlike the situation in \cite{=00005BSchn=00005D}, the subring
$W(\mathcal{O}_{F})_{L}$ is not compact. The automorphisms $\varphi,\varphi^{-1}$
and the action $\Gamma\times\widetilde{\mathbf{A}}_{L}\to\widetilde{\mathbf{A}}_{L}$
are continuous in the weak topology of $\widetilde{\mathbf{A}}_{L}$,
but the orbit map $\Gamma\to\widetilde{\mathbf{A}}_{L},$ $\gamma\mapsto\gamma x$
is not continuous, for a general $x\in\widetilde{\mathbf{A}}_{L},$
in the strong topology.

\subsection{$(\varphi,\Gamma)$-modules over $L$}

As in \cite{=00005BSchn=00005D}, Exercise 2.2.3, any finitely generated
$\mathbf{\widetilde{A}}_{L}$-module carries a canonical topology
called the \emph{weak topology}, which may be defined as the quotient
topology of any surjective homomorphism $\mathbf{\widetilde{A}}_{L}^{n}\twoheadrightarrow M$.
We make the following definition.
\begin{defn}
(i) A $(\varphi,\Gamma)$-module over $L$ is a finitely generated
$\mathbf{\widetilde{A}}_{L}$-module $M$ equipped with a $\varphi$-semilinear
endomorphism
\[
\varphi_{M}:M\to M
\]
and a $\Gamma$-semilinear action
\[
\Gamma\times M\to M,
\]
which is bi-continuous (when $\Gamma$ is given its Krull topology
and $M$ its canonical weak topology), and which commutes with $\varphi_{M}$.

(ii) A homomorphism between $(\varphi,\Gamma)$-modules over $L$
is a homomorphism of $\mathbf{\widetilde{A}}_{L}$-modules $\alpha:M\to N$
which commutes with the $\Gamma$-action and satisfies $\alpha\circ\varphi_{M}=\varphi_{N}\circ\alpha$.

(iii) A $(\varphi,\Gamma)$-module $M$ over $L$ is called \emph{étale}
if $\varphi_{M}$ is bijective.
\end{defn}

\emph{Remark about topologies. }(i) A homomorphism $\alpha$ as above,
as well as the semilinear $\varphi_{M}$, are automatically continuous.
The arguments from \cite{=00005BSchn=00005D}, Remark 2.2.5, remain
valid.

(ii) In \cite{=00005BSchn=00005D}, Theorem 2.2.8, it is shown that
if $M$ is an étale $\varphi$-module, every semilinear $\Gamma$-action
which commutes with $\varphi$ is automatically bi-continuous. The
proof of this useful fact relies on the local compactness of the field
of norms. As our $F$ is not locally compact, we do not know if we
can give up the continuity assumption in the definition, even if $M$
is étale.

\medskip{}

We denote by 
\[
\mathrm{Mod}_{\varphi,\Gamma}^{\acute{e}t}(L)\subset\mathrm{Mod}_{\varphi,\Gamma}(L)
\]
the category of $(\varphi,\Gamma)$-modules over $L$, and its full
subcategory of étale $(\varphi,\Gamma)$-modules. These are abelian
categories and the forgetful functors from them to the category of
$\mathbf{\widetilde{A}}_{L}$-modules are exact. 

\section{Equivalence of categories}

\subsection{The functors}

Let
\[
\mathbf{\widetilde{A}}=\mathcal{O}\otimes_{W(\kappa)}W(F^{sep}),
\]
and let $\varphi$ continue to denote the $q$-power Frobenius of
$W(F^{sep}),$ extended linearly to $\mathbf{\widetilde{A}}$. The
Galois group $G$ acts on $F^{sep}$, hence on $\mathbf{\widetilde{A}}$.
The weak topology of $\mathbf{\widetilde{A}}$ is defined as it was
defined for $\mathbf{\widetilde{A}}_{L}.$ As before, $\varphi$ and
the $G$-action on $\mathbf{\widetilde{A}}$ are continuous for the
weak topology, and 
\begin{equation}
\mathbf{\widetilde{A}}^{H}=\mathbf{\widetilde{A}}_{L},\,\,\,\mathbf{\widetilde{A}}^{\varphi}=\mathcal{O}.\label{eq:invariants}
\end{equation}

Given $V\in\mathrm{Rep}{}_{\mathcal{O}}(L)$, let
\[
\mathcal{D}(V)=(\mathbf{\widetilde{A}}\otimes_{\mathcal{O}}V)^{H},
\]
where the fixed points of $H$ are taken with respect to the diagonal
action. By $(\ref{eq:invariants}),$ $\mathcal{D}(V)$ is an $\mathbf{\widetilde{A}}_{L}$-module.
The diagonal action of $G$ yields a residual semilinear $\Gamma$-action
on $\mathcal{D}(V),$ and since $\varphi\otimes1$ and $H$ commute
in their action on $\mathbf{\widetilde{A}}\otimes_{\mathcal{O}}V$,
$\varphi\otimes1$ induces a semilinear endomorphism $\varphi_{\mathcal{D}(V)}$
on $\mathcal{D}(V).$
\begin{lem}
With the above definitions, $\mathcal{D}(V)\in\mathrm{Mod}_{\varphi,\Gamma}^{\acute{e}t}(L)$.
\end{lem}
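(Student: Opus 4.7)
The plan is to verify, in order, that (i) $\mathcal{D}(V)$ is finitely generated over $\widetilde{\mathbf{A}}_L$; (ii) $\varphi_{\mathcal{D}(V)}$ is bijective; and (iii) the residual $\Gamma$-action commutes with $\varphi_{\mathcal{D}(V)}$ and is bi-continuous for the weak topology. The technical heart is (i); the other two follow more or less formally once the structural picture is in place.

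The key intermediate claim I would aim to establish is that the natural multiplication map
\[
\mu_V : \widetilde{\mathbf{A}} \otimes_{\widetilde{\mathbf{A}}_L} \mathcal{D}(V) \longrightarrow \widetilde{\mathbf{A}} \otimes_{\mathcal{O}} V
\]
is an isomorphism of $\widetilde{\mathbf{A}}$-modules intertwining the actions of $\varphi$ and $G$. Once this is known, finite generation of $\mathcal{D}(V)$ follows by faithfully flat descent along $\widetilde{\mathbf{A}}_L \to \widetilde{\mathbf{A}}$ from the (obvious) finite generation of $\widetilde{\mathbf{A}} \otimes_{\mathcal{O}} V$.

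I would prove that $\mu_V$ is an isomorphism by d\'evissage, using $\mathcal{O}$-flatness of $\widetilde{\mathbf{A}}$ and the sequence $0 \to V_{\mathrm{tors}} \to V \to V/V_{\mathrm{tors}} \to 0$ to split into the torsion and free cases. The torsion case reduces further via $0 \to \pi V \to V \to V/\pi V \to 0$ to the base case $\pi V = 0$, where $\widetilde{\mathbf{A}} \otimes_{\mathcal{O}} V = F^{\mathrm{sep}} \otimes_{\kappa} V$. Here the $H$-action on the finite-dimensional $\kappa$-vector space $V$ factors (by continuity and finiteness) through $\mathrm{Gal}(F'/F)$ for a finite Galois $F' \subset F^{\mathrm{sep}}$, and the claim becomes classical Galois descent / Hilbert 90 for $F'/F$. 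Propagating back through the d\'evissage requires the vanishing of $H^1(H,\widetilde{\mathbf{A}}/\pi^n \otimes_{\mathcal{O}} V)$ at each torsion step, which is itself a formal consequence of Hilbert 90; in the torsion-free case one passes to the limit, writing $\mathcal{D}(V) = \varprojlim_n \mathcal{D}(V/\pi^n V)$ and invoking Mittag--Leffler together with the $\pi$-adic completeness of $\widetilde{\mathbf{A}}$.

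\'Etaleness is then immediate: because $F^{\mathrm{sep}}$ is perfect, $\varphi$ is bijective on $\widetilde{\mathbf{A}}$ and hence on $\widetilde{\mathbf{A}} \otimes_{\mathcal{O}} V$; since $\varphi$ commutes with $H$, the induced map $\varphi_{\mathcal{D}(V)}$ on $H$-invariants is bijective as well. Commutativity of $\varphi_{\mathcal{D}(V)}$ with $\Gamma$ is inherited from the commutativity of $\varphi \otimes 1$ with the full $G$-action, and bi-continuity of $\Gamma$ in the weak topology reduces to continuity of $G$ acting on $\widetilde{\mathbf{A}} \otimes_{\mathcal{O}} V$, which is available from continuity of $G$ on each of $\widetilde{\mathbf{A}}$ and $V$ separately. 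The principal obstacle throughout is the base case $\pi V = 0$: it is there that one must invoke the tilting equivalence of Proposition~\ref{prop:Perfectoid_basics}, so that finite Galois theory of $F^{\mathrm{sep}}/F$ matches that of $\overline{L}/\widehat{L^{ab}}$ and the standard Hilbert 90 argument can be brought to bear on a genuinely continuous $H$-representation.
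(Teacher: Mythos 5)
Your argument follows essentially the same route as the paper's (which defers the details to \cite{=00005BSchn=00005D}): establish that $ad_V$ is an isomorphism and that $\mathcal{D}(V)$ is finitely generated, first for $V$ killed by $\pi$ via Hilbert~90 for a finite Galois quotient of $H$, then by d\'evissage for torsion $V$, then by inverse limits in general, with \'etaleness immediate from the bijectivity of $\varphi$ on $\widetilde{\mathbf{A}}_L$. The only point worth flagging is that bi-continuity of the $\Gamma$-action on $\mathcal{D}(V)$ in its \emph{canonical} weak topology is not quite formal from continuity of $G$ on the two tensor factors separately: one needs finite generation and the isomorphism $ad_V$ to identify that canonical topology with the subspace topology inside $\widetilde{\mathbf{A}}\otimes_{\mathcal{O}}V$, which is exactly how the paper (following Schneider, Lemma~3.1.10) concludes.
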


\begin{proof}
The proof that $\mathcal{D}(V)$ is an \emph{étale} $\varphi$-module
is straightforward, since $\varphi$ is bijective on $\mathbf{\widetilde{A}}_{L}$.
The key step is the proof that $\mathcal{D}(V)$ is a finitely generated
module over the discrete valuation ring $\mathbf{\widetilde{A}}_{L},$
and that the homomorphism
\[
ad_{V}:\mathbf{\widetilde{A}}\otimes_{\mathbf{\widetilde{A}}_{L}}\mathcal{D}(V)\to\mathbf{\widetilde{A}}\otimes_{\mathcal{O}}V,\,\,\,\,a\otimes m\mapsto am
\]
is bijective. This is done first under the assumption that $V$ is
killed by $\pi$, with the help of Hilbert's theorem 90, then by dévissage
for torsion $V$'s, and finally, taking inverse limits, for general
$V$. For the details see \cite{=00005BSchn=00005D}. These two facts
also imply (see \cite{=00005BSchn=00005D}, Lemma 3.1.10) that the
$G$-action on $\mathbf{\widetilde{A}}\otimes_{\mathcal{O}}V$ is
continuous, hence the $\Gamma$-action on $\mathcal{D}(V)$ is continuous.
\end{proof}
We next define a functor in the opposite direction. Let $M\in\mathrm{Mod}_{\varphi,\Gamma}^{\acute{e}t}(L),$
and define
\[
\mathcal{V}(M)=(\mathbf{\widetilde{A}}\otimes_{\mathbf{\widetilde{A}}_{L}}M)^{\varphi\otimes\varphi_{M}}.
\]
By (\ref{eq:invariants}), this is an $\mathcal{O}$-module. Since
the diagonal Galois action of $G$ on $\mathbf{\widetilde{A}}\otimes_{\mathbf{\widetilde{A}}_{L}}M$
commutes with $\varphi\otimes\varphi_{M}$, $\mathcal{V}(M)$ carries
a $G$-action.
\begin{lem}
With the above definition, $\mathcal{V}(M)\in\mathrm{Rep}{}_{\mathcal{O}}(L)$.
\end{lem}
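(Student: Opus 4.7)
The argument will parallel that of the preceding lemma, with the roles of $\mathcal{O}$ and $\widetilde{\mathbf{A}}_L$ interchanged. The plan is to establish simultaneously that $\mathcal{V}(M)$ is a finitely generated $\mathcal{O}$-module of the expected length, and that the natural comparison map
\[
ad_M : \mathbf{\widetilde{A}} \otimes_{\mathcal{O}} \mathcal{V}(M) \longrightarrow \mathbf{\widetilde{A}} \otimes_{\widetilde{\mathbf{A}}_L} M, \qquad a \otimes v \mapsto av,
\]
is bijective. Once this is in hand, faithful flatness of $\mathbf{\widetilde{A}}$ over $\mathcal{O}$ transports the finite generation of the right-hand side (as a $\mathbf{\widetilde{A}}$-module) back to $\mathcal{V}(M)$.

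The crux is the base case where $\pi M = 0$, so that $M$ is a finite-dimensional $F$-vector space equipped with a bijective $\varphi$-semilinear endomorphism. Here the key input is a Hilbert 90-type statement for \'etale $\varphi$-modules over $F$: any such module trivializes after base change to $F^{sep}$, so that the $\mathbb{F}_q$-vector space $\mathcal{V}(M) = (F^{sep} \otimes_F M)^{\varphi \otimes \varphi_M}$ has dimension equal to $\dim_F M$ and freely generates $F^{sep} \otimes_F M$ as an $F^{sep}$-module. This is the descent argument invoked in Schneider's treatment, and it goes through unchanged in our setting because it depends only on $F^{sep}$ being a separable closure of $F$.

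One then bootstraps by d\'evissage to modules killed by $\pi^n$, using the short exact sequences $0 \to \pi M \to M \to M/\pi M \to 0$ together with left-exactness of $\varphi$-invariants and the vanishing supplied by the base case. For a general finitely generated $M$, writing $M = \varprojlim_n M/\pi^n M$ and applying the Mittag--Leffler condition to the surjective tower of finitely generated $\mathcal{O}/\pi^n$-modules $\mathcal{V}(M/\pi^n M)$ shows that $\mathcal{V}(M) = \varprojlim_n \mathcal{V}(M/\pi^n M)$ is finitely generated over $\mathcal{O}$, and the bijection $ad_M$ passes to the limit. The $G$-action, cut out as the $(\varphi \otimes \varphi_M)$-fixed points of the continuous $G$-action on the weakly topologized tensor product $\mathbf{\widetilde{A}} \otimes_{\widetilde{\mathbf{A}}_L} M$, is automatically continuous; since $\mathcal{V}(M)$ is finitely generated over $\mathcal{O}$, its subspace topology coincides with the $\pi$-adic one. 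The main obstacle is the base case, which rests ultimately on $F^{sep}$ being separably closed; all subsequent steps are formal and follow Schneider verbatim.
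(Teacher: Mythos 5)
Your proposal follows the same route as the paper, which itself only sketches the argument and defers to Schneider: reduce to the case $\pi M=0$, trivialize the \'etale $\varphi$-module after base change to $F^{sep}$ (Schneider's Proposition 3.2.4), then d\'evissage for $\pi^{n}$-torsion and passage to inverse limits, with continuity of the $G$-action handled at the end. One terminological slip: the base-case input is an Artin--Schreier/Lang-type trivialization (solving $\varphi(x)=x$ over the separably closed field $F^{sep}$), not Hilbert 90, which is what enters in the opposite direction for $\mathcal{D}(V)$; the statement you actually invoke is nevertheless the correct one.
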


\begin{proof}
Once again, the key is the proof that $\mathcal{V}(M)$ is finitely
generated over $\mathcal{O}$, and that the homomorphism
\[
ad_{M}:\mathbf{\widetilde{A}}\otimes_{\mathcal{O}}\mathcal{V}(M)\to\mathbf{\widetilde{A}}\otimes_{\mathbf{\widetilde{A}}_{L}}M,\,\,\,\,a\otimes v\mapsto av
\]
is bijective. This is done first under the assumption that $M$ is
killed by $\pi$ (i.e. is an $F$-vector space), using \cite{=00005BSchn=00005D}
proposition 3.2.4, then when $M$ is killed by some $\pi^{n}$ by
dévissage, and finally, taking inverse limits, for general $M$. Compare
with \cite{=00005BSchn=00005D}, Proposition 3.3.9.
\end{proof}

\subsection{The equivalence of categories}

The main theorem is the following.
\begin{thm}
\label{thm:Equivalence}The functors $\mathcal{D}$ and $\mathcal{V}$
are equivalences of categories between $\mathrm{Mod}_{\varphi,\Gamma}^{\acute{e}t}(L)$
and $\mathrm{Rep}{}_{\mathcal{O}}(L),$ and are quasi-inverse to each
other.
\end{thm}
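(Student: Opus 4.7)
The plan is to exhibit natural transformations $\eta_V : V \to \mathcal{V}(\mathcal{D}(V))$ and $\varepsilon_M : M \to \mathcal{D}(\mathcal{V}(M))$ and show that both are isomorphisms; combined with the preceding two lemmas this yields the claimed equivalence of abelian categories. For $V\in\mathrm{Rep}{}_{\mathcal{O}}(L)$ the tautological $G$-equivariant map $v\mapsto 1\otimes v\in\widetilde{\mathbf{A}}\otimes_\mathcal{O} V$ lands in the $\varphi\otimes 1$-fixed points; composing with the inverse of $ad_V$ yields $\eta_V$. For $M\in\mathrm{Mod}_{\varphi,\Gamma}^{\acute{e}t}(L)$ the analogous map $m\mapsto 1\otimes m\in \widetilde{\mathbf{A}}\otimes_{\widetilde{\mathbf{A}}_L} M$ is $H$-invariant; composing with the inverse of $ad_M$ (which identifies $(\widetilde{\mathbf{A}}\otimes_\mathcal{O}\mathcal{V}(M))^H$ with this target) produces $\varepsilon_M$. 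One checks, using that $ad_V$ intertwines $\varphi\otimes\varphi_{\mathcal{D}(V)}$ with $\varphi\otimes 1$, and that $ad_M$ intertwines the diagonal $H$-action on $\widetilde{\mathbf{A}}\otimes_\mathcal{O}\mathcal{V}(M)$ with the $H$-action on $\widetilde{\mathbf{A}}\otimes_{\widetilde{\mathbf{A}}_L}M$ through the first factor, that these are morphisms in the respective categories.

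Proving that $\eta_V$ is an isomorphism reduces, via $ad_V$, to the assertion that
\[
V \longrightarrow (\widetilde{\mathbf{A}}\otimes_\mathcal{O} V)^{\varphi\otimes 1}
\]
is a bijection for every $V$. For finite free $V$ this is immediate from $\widetilde{\mathbf{A}}^\varphi=\mathcal{O}$ of $(\ref{eq:invariants})$. For $V=\mathcal{O}/\pi^k$ one argues from the short exact sequence $0\to\widetilde{\mathbf{A}}\xrightarrow{\pi^k}\widetilde{\mathbf{A}}\to\widetilde{\mathbf{A}}/\pi^k\to 0$, using that $\varphi-1$ is surjective on $\widetilde{\mathbf{A}}$ (which follows from Artin--Schreier solvability in $F^{sep}$ together with $\pi$-adic lifting, $\widetilde{\mathbf{A}}$ being $\pi$-adically complete). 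A standard five-lemma d\'evissage along $0\to \pi^{n-1}V\to V\to V/\pi^{n-1}V\to 0$ then covers all $\pi^n$-torsion $V$, and a Mittag-Leffler inverse-limit argument using $\pi$-adic completeness and finite generation of $V$ gives the general case.

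Proving that $\varepsilon_M$ is an isomorphism reduces, via $ad_M$, to the assertion that
\[
M \longrightarrow (\widetilde{\mathbf{A}}\otimes_{\widetilde{\mathbf{A}}_L} M)^{H}
\]
is a bijection. Here the base case is $\widetilde{\mathbf{A}}^H=\widetilde{\mathbf{A}}_L$ of $(\ref{eq:invariants})$. For $M$ killed by $\pi$ one reduces modulo $\pi$ to a Galois descent statement for finite-dimensional $F$-vector space representations of $H$, which is provided by Proposition~\ref{prop:Perfectoid_basics}(ii) together with the vanishing of $H^1(H,F^{sep}\otimes_F\overline{M})$ (the tilted Hilbert 90). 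D\'evissage in powers of $\pi$ and passage to the inverse limit over $M/\pi^n M$ then finish the general case.

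The main obstacle is verifying that the inverse-limit step commutes with taking $\varphi$-invariants and with taking $H$-invariants; this is where the topological set-up matters, and it relies on the continuity of $\varphi$ and of the $G$-action on $\widetilde{\mathbf{A}}\otimes_\mathcal{O} V$ in the weak topology, together with the topological finite-generation of $\mathcal{D}(V)$ and $\mathcal{V}(M)$ over $\widetilde{\mathbf{A}}_L$ and $\mathcal{O}$ respectively -- both already secured by the preceding lemmas. All other ingredients -- the identities $\widetilde{\mathbf{A}}^H=\widetilde{\mathbf{A}}_L$ and $\widetilde{\mathbf{A}}^\varphi=\mathcal{O}$, the bijectivity of $ad_V$ and $ad_M$, and the requisite cohomology vanishing -- have been established, so the argument proceeds along the scheme of \cite{=00005BSchn=00005D} with only cosmetic modifications reflecting the replacement of the field of norms by $F$ and of $\mathbf{A}_L$ by $\widetilde{\mathbf{A}}_L$.
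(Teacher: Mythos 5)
Your proposal is correct and follows essentially the same route as the paper's (deliberately terse) proof: both reduce to the case of modules killed by $\pi$ via the bijectivity of $ad_V$ and $ad_M$, then proceed by d\'evissage and passage to inverse limits, following the scheme of Schneider's Theorem 3.3.10. The extra details you supply (surjectivity of $\varphi-1$ on $\widetilde{\mathbf{A}}$ via Artin--Schreier, the additive $H^1$ vanishing for the d\'evissage) are exactly the ingredients the paper leaves implicit by citation.
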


\begin{proof}
One first proves that $\mathcal{D}(\mathcal{V}(M))=M$ and $\mathcal{V}(\mathcal{D}(V))=V$
under the assumption that $M$ and $V$ are killed by $\pi.$ Here
the key is that $ad_{V}$ and $ad_{M}$ are both bijective. Next,
one checks that the two functors are exact and commute with inverse
limits, and then one concludes as in \cite{=00005BSchn=00005D}, Theorem
3.3.10.
\end{proof}

\subsection{Elementary divisors}

If $(R,\mathfrak{m})$ is a discrete valuation ring and $X$ is a
finitely generated $R$-module we write
\[
[X:R]=(r;e_{1},\dots,e_{n})
\]
if $r\ge0,$ $e_{1}\ge e_{2}\ge\cdots\ge e_{n}\ge1$ and $X\simeq R^{r}\oplus(R/\mathfrak{m}^{e_{1}})\oplus\cdots\oplus(R/\mathfrak{m}^{e_{n}})$.
The rank $r$ and the elementary divisors $e_{i}$ are uniquely determined
and characterize $X$ up to isomorphism. The following is well-known
and easy.
\begin{lem}
\label{lem:surjectivity lemma}If $[X:R]=[Y:R]$ and $\alpha:X\to Y$
is a surjective homomorphism, then $\alpha$ is an isomorphism.
\end{lem}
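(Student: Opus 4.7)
The plan is to reduce the statement to the well-known fact that every surjective endomorphism of a finitely generated module over a commutative ring is an isomorphism. First I would use the hypothesis $[X:R]=[Y:R]$, combined with the classification of finitely generated modules over the DVR $R$ stated just above, to fix some isomorphism $\beta\colon Y\xrightarrow{\sim} X$. The composite $f=\beta\circ\alpha\colon X\to X$ is then a surjective $R$-linear endomorphism of a finitely generated $R$-module, and since $\beta$ is already an isomorphism, showing that $f$ is one immediately gives the same for $\alpha=\beta^{-1}\circ f$.

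The only real content is the step that a surjective endomorphism $f\colon X\to X$ of a finitely generated module is injective, which I would prove by the standard Cayley--Hamilton / Nakayama trick: regard $X$ as a finitely generated $R[T]$-module with $T$ acting by $f$; surjectivity of $f$ gives $TX=X$, and the determinant trick applied to a matrix expressing a chosen set of generators of $X$ as $T$ times generators furnishes $g(T)\in R[T]$ with $(1-Tg(T))X=0$. In other words $x=f(g(f)(x))$ for every $x\in X$, which forces $\ker f=0$.

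I do not anticipate any real obstacle, since everything here is routine commutative algebra; the only decision is whether to cite this Nakayama-style fact or to reprove it inline in two lines as above. A more hands-on, fully self-contained alternative, in case the authors prefer to avoid $R[T]$-module talk, is to split $X\cong R^{r}\oplus T_{X}$ and $Y\cong R^{r}\oplus T_{Y}$ according to the given invariants, observe that $\alpha$ induces a surjection $X/T_{X}\twoheadrightarrow Y/T_{Y}$ of free $R$-modules of equal rank (hence an isomorphism by the usual determinant argument), deduce that $\ker\alpha\subseteq T_{X}$ and that $\alpha$ maps $T_{X}$ onto $T_{Y}$, and then note that a surjection between two finite-length torsion modules of equal length is automatically bijective; in particular $\ker\alpha=0$.
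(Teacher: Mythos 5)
Your proposal is correct. The paper offers no proof of this lemma at all---it is introduced with the phrase ``well-known and easy'' and left to the reader---so there is no argument of the authors' to compare against. Either of your two routes suffices: the reduction to the fact that a surjective endomorphism of a finitely generated module is injective (via the determinant/Nakayama trick over $R[T]$), or the elementary splitting $X\cong R^{r}\oplus T_{X}$, $Y\cong R^{r}\oplus T_{Y}$ followed by a rank argument on the free quotients and a length count on the torsion parts. The second version is perhaps closer in spirit to the ``easy'' the authors had in mind, since it uses only the elementary-divisor data $[X:R]=[Y:R]$ that the surrounding text has just introduced, but both are complete.
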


\begin{prop}
\label{prop:elementary divisors}Let $M\in\mathrm{Mod}{}_{\varphi,\Gamma}^{\acute{e}t}(L)$
and $V\in\mathrm{Rep}{}_{\mathcal{O}}(L)$ correspond to each other
under $\mathcal{V}$ and $\mathcal{D}$. Then
\[
[M:\mathbf{\widetilde{A}}_{L}]=[V:\mathcal{O}].
\]
\end{prop}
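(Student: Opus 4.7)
The plan is to transfer both sides to the common ring $\widetilde{\mathbf{A}}$, where the bijectivity of $ad_V$ (from the proof of the earlier lemma, together with $M=\mathcal{D}(V)$) furnishes an isomorphism $\widetilde{\mathbf{A}}\otimes_{\widetilde{\mathbf{A}}_L}M\xrightarrow{\sim}\widetilde{\mathbf{A}}\otimes_{\mathcal{O}}V$, and to exploit the fact that extension of scalars from $\mathcal{O}$ (resp.\ from $\widetilde{\mathbf{A}}_L$) to $\widetilde{\mathbf{A}}$ preserves elementary divisor types, because all three rings are discrete valuation rings with a common uniformizer $\pi$.

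First I would record the ring-theoretic fact underlying the argument. Since $F$ and $F^{sep}$ are perfect fields of characteristic $p$ (the separable closure of the perfect field $F$ is algebraic, hence perfect), the Witt rings $W(F)$ and $W(F^{sep})$ are complete DVRs with uniformizer $p$ and residue fields $F$ and $F^{sep}$ respectively. Extending scalars from $W(\kappa)$ to $\mathcal{O}$ merely rescales the uniformizer from $p$ to $\pi$, so $\widetilde{\mathbf{A}}_L$ and $\widetilde{\mathbf{A}}$ are likewise DVRs with uniformizer $\pi$. In particular $\widetilde{\mathbf{A}}/\pi^e\widetilde{\mathbf{A}}\ne 0$ for every $e\ge 1$, and the elementary divisor decomposition of a finitely generated $\widetilde{\mathbf{A}}$-module is unique.

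Next, choosing a decomposition $V\cong\mathcal{O}^{r}\oplus\bigoplus_{i=1}^{n}\mathcal{O}/\pi^{e_{i}}\mathcal{O}$ witnessing $[V:\mathcal{O}]=(r;e_{1},\dots,e_{n})$, the tensor product $\widetilde{\mathbf{A}}\otimes_{\mathcal{O}}(-)$ commutes with direct sums and cyclic quotients, so
\[
\widetilde{\mathbf{A}}\otimes_{\mathcal{O}}V\;\cong\;\widetilde{\mathbf{A}}^{r}\oplus\bigoplus_{i=1}^{n}\widetilde{\mathbf{A}}/\pi^{e_{i}}\widetilde{\mathbf{A}},
\]
which reads $[\widetilde{\mathbf{A}}\otimes_{\mathcal{O}}V:\widetilde{\mathbf{A}}]=(r;e_{1},\dots,e_{n})=[V:\mathcal{O}]$. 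The identical calculation applied to a decomposition of $M$ over $\widetilde{\mathbf{A}}_L$ gives $[\widetilde{\mathbf{A}}\otimes_{\widetilde{\mathbf{A}}_L}M:\widetilde{\mathbf{A}}]=[M:\widetilde{\mathbf{A}}_L]$. The isomorphism $ad_V$ identifies these two $\widetilde{\mathbf{A}}$-modules, and uniqueness of elementary divisors over the DVR $\widetilde{\mathbf{A}}$ then forces $[V:\mathcal{O}]=[M:\widetilde{\mathbf{A}}_L]$. No real obstacle arises; the only substantive input is the bijectivity of $ad_V$, which has already been established, and the DVR structure of $\widetilde{\mathbf{A}}$, which is automatic from the perfectness of $F^{sep}$.
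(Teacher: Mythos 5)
Your proposal is correct and is essentially the paper's own proof: the paper likewise passes through the string of equalities $[V:\mathcal{O}]=[\mathbf{\widetilde{A}}\otimes_{\mathcal{O}}V:\mathbf{\widetilde{A}}]=[\mathbf{\widetilde{A}}\otimes_{\mathbf{\widetilde{A}}_{L}}M:\mathbf{\widetilde{A}}]=[M:\mathbf{\widetilde{A}}_{L}]$, with the middle equality coming from the bijectivity of $ad_M$ and $ad_V$. You have merely spelled out the routine verification that base change along the DVR extensions $\mathcal{O}\subset\mathbf{\widetilde{A}}$ and $\mathbf{\widetilde{A}}_{L}\subset\mathbf{\widetilde{A}}$ (all sharing the uniformizer $\pi$) preserves elementary divisor types, which the paper leaves implicit.
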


\begin{proof}
We have an obvious string of equalities
\[
[V:\mathcal{O}]=[\mathbf{\widetilde{A}}\otimes_{\mathcal{O}}V:\mathbf{\widetilde{A}}]=[\mathbf{\widetilde{A}}\otimes_{\mathbf{\widetilde{A}}_{L}}M:\mathbf{\widetilde{A}}]=[M:\mathbf{\widetilde{A}}_{L}].
\]
The middle equality stems of course from the fact that $ad_{M}$ and
its inverse $ad_{V}$ are isomorphisms.
\end{proof}

\subsection{\label{subsec:A-remark-on-the-use}A remark on the use of $\mathbf{\widetilde{A}}$}

It is possible to define the functors $\mathcal{D}$ and $\mathcal{V}$
using, instead of $\mathbf{\widetilde{A}}=\mathcal{O}\otimes_{W(\kappa)}W(F^{sep})$,
the larger ring
\[
\mathbf{\widehat{A}}=\mathcal{O}\otimes_{W(\kappa)}W(\mathbb{C}_{p}^{\flat}).
\]
The reason is that for $V\in\mathrm{Rep}{}_{\mathcal{O}}(L)$ and
$M=\mathcal{D}(V)$
\[
(\mathbf{\widehat{A}}\otimes_{\mathcal{O}}V)^{H}=(\mathbf{\widehat{A}}\otimes_{\mathbf{\widetilde{A}}}(\mathbf{\widetilde{A}}\otimes_{\mathcal{O}}V))^{H}=(\mathbf{\widehat{A}}\otimes_{\mathbf{\widetilde{A}}}(\mathbf{\widetilde{A}}\otimes_{\mathbf{\widetilde{A}}_{L}}M))^{H}
\]

\[
=(\mathbf{\widehat{A}}\otimes_{\mathbf{\widetilde{A}}_{L}}M)^{H}=\widehat{\mathbf{A}}^{H}\otimes_{\mathbf{\widetilde{A}}_{L}}M=M
\]
since $(\mathbb{C}_{p}^{\flat})^{H}=F$. A similar argument works
for the functor $\mathcal{V}$.

\section{Restriction and Induction}

\subsection{Definitions of the two functors}

Let $L_{1}\subset L_{2}$ be two finite extensions of $\mathbb{Q}_{p}$
contained in $\mathbb{\overline{Q}}_{p}.$ Let $\mathcal{O}_{1}\subset\mathcal{O}_{2}$
be their rings of integers, $\kappa_{1}\subset\kappa_{2}$ their residue
fields, and let $G_{i},H_{i}$ and $\Gamma_{i}$ be the groups defined
before, with $L_{i}$ as $L$. Write
\[
d=[\kappa_{2}:\kappa_{1}]
\]
for the inertial degree of $L_{2}/L_{1}.$ Letting $\varphi_{i}$
denote the Frobenius automorphism of $L_{i}^{nr}/L_{i}$, we have
that
\[
\varphi_{2}|_{L_{1}^{nr}}=\varphi_{1}^{d}.
\]
Let
\[
\mathrm{Res}{}_{L_{1}}^{L_{2}}:\mathrm{Rep}{}_{\mathcal{O}_{1}}(L_{1})\to\mathrm{Rep}{}_{\mathcal{O}_{2}}(L_{2})
\]
be the functor $\mathrm{Res}{}_{L_{1}}^{L_{2}}(V)=\mathcal{O}_{2}\otimes_{\mathcal{O}_{1}}\mathrm{Res}{}_{G_{1}}^{G_{2}}(V).$
Thus, we restrict the group action to a smaller subgroup and extend
scalars. Similarly let
\[
\mathrm{Ind}{}_{L_{1}}^{L_{2}}:\mathrm{Rep}{}_{\mathcal{O}_{2}}(L_{2})\to\mathrm{Rep}{}_{\mathcal{O}_{1}}(L_{1})
\]
be the functor $\mathrm{Ind}{}_{L_{1}}^{L_{2}}(W)=\mathrm{Res}{}_{\mathcal{O}_{1}}^{\mathcal{O}_{2}}(\mathrm{Ind}{}_{G_{1}}^{G_{2}}(W)).$
In this case, we take the induced module, which is an $\mathcal{O}_{2}$-module
with a $G_{1}$-action, but view it solely as an $\mathcal{O}_{1}$-module.

The two functors are adjoints of each other: there is a functorial
isomorphism
\[
\mathrm{Hom}{}_{\mathrm{Rep}{}_{\mathcal{O}_{1}}(L_{1})}(V,\mathrm{Ind}{}_{L_{1}}^{L_{2}}(W))\simeq\mathrm{Hom}{}_{\mathrm{Rep}{}_{\mathcal{O}_{2}}(L_{2})}(\mathrm{Res}{}_{L_{1}}^{L_{2}}(V),W).
\]
Our goal is to describe the corresponding functors between the categories
$\mathrm{Mod}_{\varphi,\Gamma}^{\acute{e}t}(L_{i})$. We shall construct
functors
\[
\mathcal{R}_{L_{1}}^{L_{2}}:\mathrm{Mod}_{\varphi,\Gamma}(L_{1})\to\mathrm{Mod}_{\varphi,\Gamma}(L_{2}),\,\,\,\,\mathcal{I}_{L_{1}}^{L_{2}}:\mathrm{Mod}_{\varphi,\Gamma}(L_{2})\to\mathrm{Mod}_{\varphi,\Gamma}(L_{1}),
\]
show that they respect the full subcategories of étale $(\varphi,\Gamma)$-modules,
and that the following diagram is commutative

\begin{equation}
\xymatrix{\mathrm{Rep}_{\mathcal{O}_{1}}(L_{1})\ar@<1ex>[d]^{\mathcal{D}_{1}}\ar[r]^{\mathrm{Res}_{L_{1}}^{L_{2}}} & \mathrm{Rep}_{\mathcal{O}_{2}}(L_{2})\ar@<1ex>[d]^{\mathcal{D}_{2}}\ar[r]^{\mathrm{Ind}_{L_{1}}^{L_{2}}} & \mathrm{Rep}_{\mathcal{O}_{1}}(L_{1})\ar@<1ex>[d]^{\mathcal{D}_{1}}\\
\mathrm{Mod}_{\varphi,\Gamma}^{\textrm{ét}}(L_{1})\ar@<1ex>[u]^{\mathcal{V}_{1}}\ar[r]^{\mathcal{R}_{L_{1}}^{L_{2}}} & \mathrm{Mod}_{\varphi,\Gamma}^{\textrm{ét}}(L_{2})\ar@<1ex>[u]^{\mathcal{V}_{2}}\ar[r]^{\mathcal{I}_{L_{1}}^{L_{2}}} & \mathrm{Mod}_{\varphi,\Gamma}^{\textrm{ét}}(L_{1})\ar@<1ex>[u]^{\mathcal{V}_{1}}
}
\label{eq:commutativity}
\end{equation}

Chasing a diagram of functors as above means that we have to check
commutativity both on objects and on morphisms. We shall do it on
objects, leaving out verifications, e.g. that $\mathcal{D}\circ\mathrm{Ind}{}_{L_{1}}^{L_{2}}\circ\mathcal{V}$
agrees with $\mathcal{I}_{L_{1}}^{L_{2}}$ on morphisms, to the reader.

Note first that since $L_{1}\subset L_{2},$ also $L_{1}^{ab}\subset L_{2}^{ab},$
hence $K_{1}\subset K_{2}\subset\mathbb{C}_{p}$ and $F_{1}\subset F_{2}\subset\mathbb{C}_{p}^{\flat}$.
It follows that
\[
\mathbf{\widetilde{A}}_{L_{1}}\subset\mathbf{\widetilde{A}}_{L_{2}},\,\,\,\,\,\mathbf{\widetilde{A}}_{1}\subset\mathbf{\widetilde{A}}_{2}.
\]
Second, there is a natural group homomorphism
\[
r:\Gamma_{2}\to\Gamma_{1}
\]
given by $r(\gamma_{2})=\gamma_{2}|_{L_{1}^{ab}}$. Its image is $Gal(L_{1}^{ab}/L_{2}\cap L_{1}^{ab}),$
which is of finite index in $\Gamma_{1}$. Its kernel
\[
\Gamma_{12}=\ker(r)
\]
is the subgroup $Gal(L_{2}^{ab}/L_{1}^{ab}L_{2}).$ If we let $H_{12}=Gal(\overline{L}_{1}/L_{1}^{ab}L_{2})$
then this kernel is identified with $H_{12}/H_{2}.$ Via the local
Artin maps, $r$ is identified with the norm map $N_{L_{2}/L_{1}}:\widehat{L_{2}^{\times}}\to\widehat{L_{1}^{\times}}.$
The inclusion $\mathbf{\widetilde{A}}_{L_{1}}\subset\mathbf{\widetilde{A}}_{L_{2}}$
is compatible with the homomorphism $r$.

To define $\mathcal{R}_{L_{1}}^{L_{2}}$ let $M_{1}\in\mathrm{Mod}{}_{\varphi,\Gamma}(L_{1}).$
Then simply put
\[
M_{2}=\mathcal{R}_{L_{1}}^{L_{2}}(M_{1})=\mathbf{\widetilde{A}}_{L_{2}}\otimes_{\mathbf{\widetilde{A}}_{L_{1}}}M_{1}
\]
with the $\Gamma_{2}$-action
\[
\gamma_{2}(\lambda\otimes m)=\gamma_{2}(\lambda)\otimes r(\gamma_{2})(m)
\]
and $\varphi_{M_{2}}$ given by
\[
\varphi_{M_{2}}(\lambda\otimes m)=\varphi_{2}(\lambda)\otimes\varphi_{M_{1}}^{d}(m).
\]
One easily checks that $M_{2}\in\mathrm{Mod}_{\varphi,\Gamma}(L_{2}).$

\bigskip{}

The definition of $\mathcal{I}_{L_{1}}^{L_{2}}$ is a little more
subtle, as this is a case of ``semilinear induction'' not too common
in the literature. Let $\Phi$ be a variable and let $\Phi^{d}$ act
on $M_{2}\in\mathrm{Mod}_{\varphi,\Gamma}(L_{2})$ as $\varphi_{M_{2}}.$
We set
\[
M_{1}=\mathcal{I}_{L_{1}}^{L_{2}}(M_{2})=\left\{ f:\Gamma_{1}\to\mathcal{O}_{1}[\Phi]\otimes_{\mathcal{O}_{1}[\Phi^{d}]}M_{2}|\,f(r(\gamma_{2})\gamma)=(1\otimes\gamma_{2})f(\gamma)\right\} 
\]
(for all $\gamma\in\Gamma_{1},$ $\gamma_{2}\in\Gamma_{2}).$ Note
that for any $f\in M_{1}$, its image lies in $\mathcal{O}_{1}[\Phi]\otimes_{\mathcal{O}_{1}[\Phi^{d}]}M_{2}^{\Gamma_{12}}.$
The structure of $M_{1}$ as an $\mathbf{\widetilde{A}}_{L_{1}}$-module
is given as follows. For $\lambda\in\mathbf{\widetilde{A}}_{L_{1}}$
and $f(\gamma)=\sum_{i=0}^{d-1}\Phi^{i}\otimes m_{i}(\gamma)$ ($m_{i}(\gamma)\in M_{2})$
we put
\[
(\lambda f)(\gamma)=\sum_{i=0}^{d-1}\Phi^{i}\otimes(\varphi_{1}^{-i}\circ\gamma)(\lambda)\cdot m_{i}(\gamma).
\]
The $\Gamma_{1}$-action is given as usual by right translation
\[
(\gamma_{1}f)(\gamma)=f(\gamma\gamma_{1}).
\]
Finally, $\varphi_{M_{1}}$ is given by
\[
(\varphi_{M_{1}}f)(\gamma)=(\Phi\otimes1)f(\gamma).
\]
\begin{prop}
$M_{1}\in\mathrm{Mod}_{\varphi,\Gamma}(L_{1}).$
\end{prop}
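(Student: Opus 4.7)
The proof consists in verifying the four clauses of the definition of a $(\varphi,\Gamma)$-module over $L_{1}$: (a) the formula $(\lambda f)(\gamma)=\sum_{i=0}^{d-1}\Phi^{i}\otimes(\varphi_{1}^{-i}\gamma)(\lambda)\cdot m_{i}(\gamma)$ makes $M_{1}$ a well-defined $\widetilde{\mathbf{A}}_{L_{1}}$-module; (b) $\varphi_{M_{1}}$ is $\varphi_{1}$-semilinear and commutes with the $\Gamma_{1}$-action; (c) the $\Gamma_{1}$-action is $\widetilde{\mathbf{A}}_{L_{1}}$-semilinear and bi-continuous; (d) $M_{1}$ is finitely generated over $\widetilde{\mathbf{A}}_{L_{1}}$. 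The plan is to dispose of (a)--(c) by direct manipulation of the explicit formulas and to concentrate on (d), which is the main substantive point.

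For the well-definedness of $\lambda f$ in (a), unfolding the identity $(\lambda f)(r(\gamma_{2})\gamma)=(1\otimes\gamma_{2})(\lambda f)(\gamma)$ and using that the $\Phi^{i}$-coefficient of $f(r(\gamma_{2})\gamma)$ is $\gamma_{2}m_{i}(\gamma)$ reduces matters to the commutation $r(\gamma_{2})\circ\varphi_{1}^{-i}=\varphi_{1}^{-i}\circ r(\gamma_{2})$ on $\widetilde{\mathbf{A}}_{L_{1}}$, which holds because $r(\gamma_{2})\in\Gamma_{1}$ commutes with $\varphi_{1}$. The remaining module axioms are then automatic. For the $\varphi_{1}$-semilinearity in (b), the $\Phi^{i}$-coefficient of $\varphi_{M_{1}}(\lambda f)(\gamma)=(\Phi\otimes1)(\lambda f)(\gamma)$ is $(\varphi_{1}^{-(i-1)}\gamma)(\lambda)\cdot m_{i-1}(\gamma)$ for $i\geq 1$ by direct shift, while for $i=0$ it equals $\varphi_{M_{2}}((\varphi_{1}^{-(d-1)}\gamma)(\lambda) m_{d-1}(\gamma))=(\varphi_{1}\gamma)(\lambda)\cdot\varphi_{M_{2}}(m_{d-1}(\gamma))$, using $\varphi_{2}$-semilinearity of $\varphi_{M_{2}}$ together with $\varphi_{2}=\varphi_{1}^{d}$ on $\widetilde{\mathbf{A}}_{L_{1}}\subset\widetilde{\mathbf{A}}_{L_{2}}$ (since $q_{2}=q_{1}^{d}$); both agree with the $\Phi^{i}$-coefficient of $\varphi_{1}(\lambda)\cdot\varphi_{M_{1}}(f)(\gamma)$. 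Commutation of $\varphi_{M_{1}}$ with the $\Gamma_{1}$-action is immediate since right translation and left multiplication by $\Phi\otimes 1$ act on different coordinates, and $\Gamma_{1}$-semilinearity is another direct unwinding.

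For (d), pick coset representatives $\gamma_{(1)},\dots,\gamma_{(s)}$ of the finite-index inclusion $r(\Gamma_{2})\subset\Gamma_{1}$. Applying the transformation rule to $\gamma_{2}\in\Gamma_{12}$ forces every $f(\gamma)$ to lie in $\mathcal{O}_{1}[\Phi]\otimes_{\mathcal{O}_{1}[\Phi^{d}]}M_{2}^{\Gamma_{12}}$, and $f$ is determined by the tuple $(f(\gamma_{(j)}))_{j=1}^{s}$; hence there is an isomorphism of abelian groups $M_{1}\cong (M_{2}^{\Gamma_{12}})^{sd}$. By Ax--Sen--Tate (Prop.~1.1.2(iv)) applied to $\Gamma_{12}\subset \Gamma_{2}$, $K_{2}^{\Gamma_{12}}=\widehat{L_{1}^{ab}L_{2}}$ is a finite extension of $K_{1}$ of degree $[L_{2}:L_{2}\cap L_{1}^{ab}]$, so $F_{2}^{\Gamma_{12}}/F_{1}$ is finite separable by Prop.~1.1.2(iii), and $\widetilde{\mathbf{A}}_{L_{2}}^{\Gamma_{12}}=\mathcal{O}_{2}\otimes_{W(\kappa_{2})}W(F_{2}^{\Gamma_{12}})$ is a finitely generated $\widetilde{\mathbf{A}}_{L_{1}}$-algebra. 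A continuous Galois-descent argument for the profinite group $\Gamma_{12}$ (Hilbert~90 in its perfectoid/almost-\'etale form, invoking the continuity of the $\Gamma_{2}$-action on $M_{2}$ that is part of the definition) then gives that $M_{2}^{\Gamma_{12}}$ is finitely generated over $\widetilde{\mathbf{A}}_{L_{2}}^{\Gamma_{12}}$, and chaining these conclusions $M_{1}$ is finitely generated over $\widetilde{\mathbf{A}}_{L_{1}}$ (the twist by $\varphi_{1}^{-i}\circ\gamma_{(j)}$ in the action formula is irrelevant, each factor being an automorphism of $\widetilde{\mathbf{A}}_{L_{1}}$).

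Finally, the bi-continuity of the $\Gamma_{1}$-action in (c) reduces, after using the product decomposition above, to continuity of the $\Gamma_{2}$-action on $M_{2}$ together with the continuity of $r:\Gamma_{2}\to\Gamma_{1}$ and of the quotient map $\Gamma_{2}\twoheadrightarrow r(\Gamma_{2})$, which has closed kernel $\Gamma_{12}$ and admits continuous local sections onto the values of $f$ (which lie in $\Gamma_{12}$-invariants, so the choice of lift is immaterial). The main obstacle is the descent step in (d): since $\Gamma_{12}$ is typically an infinite profinite group, classical finite Galois descent does not apply directly, and one must rely on its continuous analogue in the perfectoid Witt-vector setting.
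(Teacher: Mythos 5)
Your reduction of the problem is the same as the paper's: everything except finite generation over $\widetilde{\mathbf{A}}_{L_{1}}$ is a routine unwinding of the formulas, and since $r(\Gamma_{2})$ has finite index in $\Gamma_{1}$ and every value $f(\gamma)$ is $\Gamma_{12}$-invariant, the whole question comes down to showing that $M_{2}^{\Gamma_{12}}$ is finitely generated over $\widetilde{\mathbf{A}}_{L_{1}}$. You also correctly identify the essential input (almost-\'etale descent for the perfectoid extension $F_{2}/F_{12}$, i.e.\ the vanishing of $H^{1}_{cont}(\Gamma_{12},GL_{n}(F_{2}))$ \`a la Sen). But the sentence ``a continuous Galois-descent argument \dots then gives that $M_{2}^{\Gamma_{12}}$ is finitely generated over $\widetilde{\mathbf{A}}_{L_{2}}^{\Gamma_{12}}$'' is precisely where the substance of the proof lies, and as stated it does not follow. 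The descent statement you invoke lives over the residue field $F_{2}$, i.e.\ it applies directly only when $M_{2}$ is killed by $\pi_{2}$; $M_{2}$ is in general a finitely generated module over the discrete valuation ring $\widetilde{\mathbf{A}}_{L_{2}}$, possibly with torsion, and taking $\Gamma_{12}$-invariants does not obviously commute with reduction modulo $\pi_{1}$, nor does finite generation modulo $\pi_{1}$ obviously lift.

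Concretely, the missing steps are: (1) from $H^{1}_{cont}(\Gamma_{12},GL_{n}(F_{2}))=0$ deduce $H^{1}_{cont}(\Gamma_{12},N)=0$ for finite-dimensional $F_{2}$-vector spaces $N$ with semilinear action; (2) use this, via the long exact sequence for $0\to M_{2}[\pi_{1}]\to M_{2}\to\pi_{1}M_{2}\to0$ and d\'evissage on the torsion module $M_{2}[\pi_{1}]=M_{2}[\pi_{2}^{e}]$, to prove $\pi_{1}M_{2}^{\Gamma_{12}}=(\pi_{1}M_{2})^{\Gamma_{12}}$, so that $M_{2}^{\Gamma_{12}}/\pi_{1}M_{2}^{\Gamma_{12}}$ injects into $(M_{2}/\pi_{1}M_{2})^{\Gamma_{12}}$, whose finite-dimensionality over $F_{1}$ follows by d\'evissage from the mod-$\pi_{2}$ case; (3) conclude by a Nakayama-type argument, which requires observing that $M_{2}^{\Gamma_{12}}$ is $\pi_{1}$-adically separated (no nonzero element is infinitely divisible, since this already holds in $M_{2}$). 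Without (1)--(3) the claim about $M_{2}^{\Gamma_{12}}$ is an assertion, not a proof; with them, your argument becomes the paper's.
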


Everything is easy to check, except that $M_{1}$ is finitely generated
over $\mathbf{\widetilde{A}}_{L_{1}}$. Since $r(\Gamma_{2})$ is
of finite index in $\Gamma_{1}$, it is enough to show that $M_{2}^{\Gamma_{12}}$
is a finitely generated $\mathbf{\widetilde{A}}_{L_{1}}$-module.
We prove this fact in a sequence of lemmas.

By Proposition \ref{prop:Perfectoid_basics} the field $K_{1}L_{2}$
is a perfectoid,
\[
F_{12}=(K_{1}L_{2})^{\flat}
\]
is a finite extension of $F_{1}$, the inclusion $F_{1}^{sep}\cap F_{2}\subset F_{2}$
is dense,
\[
\Gamma_{12}\simeq Gal(F_{1}^{sep}\cap F_{2}/F_{12})=Aut_{cont}(F_{2}/F_{12}),
\]
and $F_{2}^{\Gamma_{12}}=F_{12}$. For simplicity write $F=F_{12},$
$E=F_{2}$ and $\Gamma=\Gamma_{12}.$
\begin{lem}
\label{lem:Finite dimension}Let $N$ be a finite dimensional vector
space over $E$ equipped with a semilinear action of $\Gamma.$ Put
$N_{0}=N^{\Gamma}.$ Then
\[
E\otimes_{F}N_{0}\simeq N.
\]
In particular, $\dim_{F}N_{0}<\infty.$
\end{lem}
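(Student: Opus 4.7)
The plan is a Galois descent argument: I would establish $E \otimes_F N_0 \xrightarrow{\sim} N$ by independently checking injectivity and surjectivity of the natural multiplication map.

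For injectivity, the standard semilinear linear-independence-of-characters trick applies. If $v_1,\dots,v_r \in N_0$ were $F$-linearly independent but $E$-linearly dependent, take a nontrivial relation $\sum a_i v_i = 0$ of minimal length $r$, normalized with $a_r = 1$. Applying $\gamma - \mathrm{id}$ for each $\gamma \in \Gamma$ produces a strictly shorter relation $\sum_{i<r}(\gamma(a_i) - a_i)v_i = 0$, which must therefore be trivial; so $\gamma(a_i) = a_i$ for every $i$ and $\gamma$, forcing $a_i \in E^\Gamma = F$ (using $F_2^{\Gamma_{12}} = F_{12}$, recalled just before the lemma), a contradiction. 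This also yields the bound $\dim_F N_0 \le n := \dim_E N$.

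For surjectivity, it suffices to exhibit a $\Gamma$-invariant $E$-basis of $N$. Any chosen $E$-basis encodes the semilinear $\Gamma$-action as a continuous $1$-cocycle $c: \Gamma \to GL_n(E)$, and the task becomes showing $c$ is a coboundary. I would proceed in two stages. First (the ``decompletion'' step), use continuity of $c$ to select an open normal subgroup $\Gamma' \subset \Gamma$ on which $c$ takes values in an arbitrarily small neighborhood of $1 \in GL_n(E)$; a successive-approximation argument powered by completeness of $E$ then produces $P_0 \in GL_n(E)$ close to $1$ which conjugates $c|_{\Gamma'}$ to the trivial cocycle. After this change of basis, the new basis vectors lie in $N^{\Gamma'}$, so $N^{\Gamma'}$ has $E^{\Gamma'}$-dimension $n$ and $E \otimes_{E^{\Gamma'}} N^{\Gamma'} \simeq N$. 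Second, $\Gamma/\Gamma'$ is finite and, by the tilting correspondence between intermediate perfectoid extensions of $K_1 L_2$ inside $K_2$ and their tilts inside $F_1^{sep} \cap F_2$, the invariant field $E^{\Gamma'}$ is a \emph{finite} Galois extension of $F$. Classical Hilbert~90 applied to $E^{\Gamma'}/F$ then supplies an $F$-basis of $(N^{\Gamma'})^{\Gamma/\Gamma'} = N_0$ of size $n$ with $E^{\Gamma'} \otimes_F N_0 \simeq N^{\Gamma'}$. Combining the two stages yields $E \otimes_F N_0 \simeq N$ and the finiteness of $\dim_F N_0$.

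The main obstacle is the first, decompletion stage. Here I would fix matrix norms on $M_n(E)$ compatible with the weak topology of \S1.2, use continuity of the $\Gamma$-action to bound $|c(\gamma) - 1|$ uniformly on some $\Gamma'$, and construct the untwister $P_0$ as the limit of a rapidly convergent Cauchy sequence of approximate solutions to the semilinear equation $\gamma(P_0) = P_0\, c(\gamma)^{-1}$; convergence is guaranteed by completeness of $E$, and the required smallness comes from the continuity of $c$. The second stage, by contrast, is a direct application of the classical finite Galois Hilbert~90 and presents no real difficulty.
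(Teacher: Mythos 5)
Your injectivity argument and your reduction of surjectivity to trivializing the continuous cocycle $c:\Gamma\to GL_n(E)$ coincide with the paper's, which proves surjectivity by showing $H^{1}_{cont}(\Gamma,GL_n(E))$ is trivial along the lines of Proposition 4 of Sen's paper. The gap is in your first (``decompletion'') stage. Continuity of $c$ together with completeness of $E$ is \emph{not} enough to trivialize $c|_{\Gamma'}$, even after shrinking $\Gamma'$ so that $c$ lands arbitrarily close to $1$. At each step of the successive approximation you must produce an approximate solution $p$, with controlled norm, of the additive equation $\gamma(p)-p=b(\gamma)$ for a small approximate cocycle $b$ on the \emph{infinite} profinite group $\Gamma'$; this is an averaging problem, and the only available tool is the existence of almost-integral trace-one elements in finite subextensions of $E/F$, i.e.\ the almost-\'etaleness of $\mathcal{O}_{F'}/\mathcal{O}_{F}$ for $[F':F]<\infty$ (Proposition 5.23 of Scholze's paper). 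This is exactly what replaces Proposition 1 of Sen in the paper's proof, and, as the paper stresses, it is the only place where the perfectoid hypothesis on $F$ is used. Your argument never invokes it, and indeed the same reasoning applied verbatim to $\Gamma=G_{\mathbb{Q}_p}$ acting on $E=\mathbb{C}_p$ with $F=\mathbb{Q}_p$ would show that $H^{1}_{cont}(G_{\mathbb{Q}_p},GL_n(\mathbb{C}_p))$ is trivial. That is false: for small nonzero $\varepsilon\in\mathbb{Z}_p$ the map $\gamma\mapsto\exp(\varepsilon\log\chi_{cyc}(\gamma))$ is an arbitrarily small continuous cocycle with values in $GL_1(\mathcal{O}_{\mathbb{C}_p})$ which is not a coboundary (Tate--Sen), and it remains non-trivial on every open subgroup.

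Your second stage is sound: by the tilting correspondence and Ax--Sen--Tate, $E^{\Gamma'}$ is the finite Galois extension of $F$ with group $\Gamma/\Gamma'$, and classical Hilbert 90 finishes the descent. The two-stage architecture is a legitimate reorganization of the Sen-style proof. But the entire content of the lemma sits precisely in the step you describe as ``powered by completeness of $E$'': you must insert the almost-\'etale trace argument there, otherwise the proof does not go through.
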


\begin{proof}
Showing that
\[
E\otimes_{F}N_{0}\to N,\,\,\,\,\,a\otimes m\mapsto am
\]
is injective is standard: By way of contradiction, assume $\sum_{i=1}^{t}a_{i}m_{i}=0$
where $m_{i}\in N_{0},$ $a_{i}\in E$ and $t$ is minimal. We may
assume that $a_{1}=1.$ Applying $\gamma\in\Gamma$ to the relation
and subtracting we get a shorter relation, contradicting the minimality
of $t,$ unless all $a_{i}\in E^{\Gamma}=F.$ But this means that
$t=1$ so $m_{1}=0.$

The surjectivity is equivalent to the statement that $H_{cont}^{1}(\Gamma,GL_{n}(E))=0$
(where $n=\dim_{E}N$). The proof of this is similar to the proof
of Proposition 4 in \cite{Sen}. The role of \emph{loc. cit.} Proposition
1 is played by the almost-étaleness of the extension $\mathcal{O}_{F'}/\mathcal{O}_{F}$
($[F':F]<\infty$), \emph{cf. }Proposition 5.23 in \cite{=00005BScho=00005D}.
This is the only place where the assumption that $F$ is a perfectoid
is used.
\end{proof}
\begin{lem}
Let $N$ be as above. Then $H_{cont}^{1}(\Gamma,N)=0.$
\end{lem}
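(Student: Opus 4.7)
The plan is to reduce to the case $N = E$ and then invoke the same perfectoid almost-purity input that powered Lemma~\ref{lem:Finite dimension}. That lemma gives a $\Gamma$-equivariant isomorphism $E \otimes_F N_0 \simeq N$, where $N_0 = N^\Gamma$ is a finite-dimensional $F$-vector space on which $\Gamma$ acts trivially. Choosing an $F$-basis of $N_0$ identifies $N$ with $E^m$ as a continuous $\Gamma$-module ($m = \dim_F N_0$), so $H^1_{cont}(\Gamma, N) \simeq H^1_{cont}(\Gamma, E)^m$, and it therefore suffices to prove $H^1_{cont}(\Gamma, E) = 0$.

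For the reduced statement I would fix a pseudo-uniformizer $\varpi \in \mathfrak{m}_F \setminus \{0\}$ and quote Scholze's perfectoid almost purity (Proposition~5.23 in \cite{=00005BScho=00005D}), exactly as in the proof of Lemma~\ref{lem:Finite dimension}: for every finite Galois subextension $E'/F$ of $E/F$, the extension $\mathcal{O}_{E'}/\mathcal{O}_F$ is almost \'etale, so $H^i(\mathrm{Gal}(E'/F), \mathcal{O}_{E'}/\varpi^n)$ is annihilated by $\mathfrak{m}_F$ for all $i \geq 1$ and $n \geq 1$. Taking the colimit over such $E'$, the inverse limit over $n$, and then inverting $\varpi$ --- legal because $\Gamma$ is profinite, so any continuous cochain into $E$ has image bounded in some $\varpi^{-N}\mathcal{O}_E$ --- yields that $H^1_{cont}(\Gamma, E)$ is annihilated by $\mathfrak{m}_F$. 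But $H^1_{cont}(\Gamma, E)$ is an $F$-vector space, and any nonzero element of $\mathfrak{m}_F$ is a unit in $F$, so the group must vanish.

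The hard part will be the almost-vanishing statement for $\mathcal{O}_{E'}$ modulo $\varpi^n$; this is the single perfectoid input, and it is precisely what the almost purity theorem provides --- just as for Lemma~\ref{lem:Finite dimension}, it is the only place where the perfectoid hypothesis on $F$ is really invoked. Everything else, namely the reduction to $N = E$, the passage from ``almost zero'' to honest zero via the $F$-module structure on $H^1_{cont}$, and the commutation of continuous cohomology with the colimit over finite subextensions and the inverse limit over powers of $\varpi$, is formal for the profinite group $\Gamma$ and should present no serious trouble.
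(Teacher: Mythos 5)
Your proof follows the paper's own route: the reduction to $H^{1}_{cont}(\Gamma,E)=0$ via Lemma~\ref{lem:Finite dimension} is exactly what the paper does, and the vanishing of $H^{1}_{cont}(\Gamma,E)$ is then obtained from the same single perfectoid input, almost purity, the paper merely citing the analogy with classical Sen--Tate theory instead of spelling it out. The one step you treat as ``formal'' that deserves a word of care is the commutation of continuous $H^{1}$ with the inverse limit over powers of $\varpi$ (a $\varprojlim^{1}$ issue), which the classical Tate--Sen argument avoids by running a direct successive-approximation on cocycles; this is a matter of presentation rather than a gap.
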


\begin{proof}
The previous Lemma reduces this to the statement that $H_{cont}^{1}(\Gamma,E)=0,$
whose proof is again similar to that of the analogous statement in
classical Sen-Tate theory.
\end{proof}
\begin{lem}
$\pi_{1}M_{2}^{\Gamma_{12}}=(\pi_{1}M_{2})^{\Gamma_{12}}.$
\end{lem}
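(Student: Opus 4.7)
The plan is a standard dévissage that reduces the claim to the $H^1_{\mathrm{cont}}$-vanishing established in the preceding lemma. The inclusion $\pi_1 M_2^{\Gamma_{12}} \subseteq (\pi_1 M_2)^{\Gamma_{12}}$ is immediate since $\pi_1 \in \mathcal{O}_1$ is fixed by $\Gamma_{12}$. For the reverse inclusion, I would apply the long exact sequence of continuous $\Gamma_{12}$-cohomology to the short exact sequence of continuous semilinear $\Gamma_{12}$-modules
$$0 \longrightarrow M_2[\pi_1] \longrightarrow M_2 \xrightarrow{\;\pi_1\;} \pi_1 M_2 \longrightarrow 0,$$
obtaining that the cokernel of $M_2^{\Gamma_{12}} \xrightarrow{\pi_1} (\pi_1 M_2)^{\Gamma_{12}}$ injects into $H^1_{\mathrm{cont}}(\Gamma_{12}, M_2[\pi_1])$. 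It therefore suffices to show that this $H^1$ vanishes.

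\textbf{Filtration step.} Write $\pi_1 = u\,\pi_2^{\,e}$, where $e=e(L_2/L_1)$ is the ramification index and $u \in \mathcal{O}_2^\times \subset \widetilde{\mathbf{A}}_{L_2}^\times$. Then $M_2[\pi_1]=M_2[\pi_2^{\,e}]$, and the finite filtration
$$0 \subset M_2[\pi_2] \subset M_2[\pi_2^{\,2}] \subset \cdots \subset M_2[\pi_2^{\,e}] = M_2[\pi_1]$$
consists of $\Gamma_{12}$-stable submodules (since $\pi_2 \in \widetilde{\mathbf{A}}_{L_2}$ is $\Gamma_{12}$-fixed); each successive quotient is killed by $\pi_2$ and is thus a module over $\widetilde{\mathbf{A}}_{L_2}/\pi_2 = F_2 = E$. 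Because $M_2$ is finitely generated over the DVR $\widetilde{\mathbf{A}}_{L_2}$, each graded piece is a finite-dimensional $E$-vector space with continuous semilinear $\Gamma_{12}$-action. The preceding lemma gives $H^1_{\mathrm{cont}}(\Gamma_{12}, \cdot)=0$ on each graded piece, and an obvious induction on $i$ using the long exact cohomology sequence attached to
$$0 \longrightarrow M_2[\pi_2^{\,i}] \longrightarrow M_2[\pi_2^{\,i+1}] \longrightarrow M_2[\pi_2^{\,i+1}]/M_2[\pi_2^{\,i}] \longrightarrow 0$$
yields $H^1_{\mathrm{cont}}(\Gamma_{12}, M_2[\pi_1]) = 0$, as required.

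\textbf{Potential obstacle.} There is no serious difficulty; the only thing to check carefully is that the subspace and quotient topologies inherited from the weak topology on $M_2$ are the appropriate ones for continuous cohomology, and that they agree on the graded pieces with the canonical topology of a finite-dimensional vector space over the complete field $F_2$. This is routine, since multiplication by powers of $\pi_2$ is $\widetilde{\mathbf{A}}_{L_2}$-linear and continuous, and any Hausdorff topological vector space of finite dimension over a complete valued field carries a unique such topology.
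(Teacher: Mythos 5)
Your proof is correct and follows exactly the paper's argument: the long exact sequence attached to $0\to M_{2}[\pi_{1}]\to M_{2}\overset{\pi_{1}}{\to}\pi_{1}M_{2}\to0$ reduces the claim to $H_{cont}^{1}(\Gamma_{12},M_{2}[\pi_{1}])=0$, which follows by dévissage (via $M_{2}[\pi_{1}]=M_{2}[\pi_{2}^{e}]$) from the vanishing lemma for finite-dimensional $E$-vector spaces. You merely spell out the filtration step more explicitly than the paper does.
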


\begin{proof}
Consider the long exact sequence in cohomology associated to the short
exact sequence
\[
0\to M_{2}[\pi_{1}]\to M_{2}\overset{\pi_{1}}{\to}\pi_{1}M_{2}\to0.
\]
It is enough to show that $H_{cont}^{1}(\Gamma_{12},M_{2}[\pi_{1}])=0.$
Noting that $M_{2}[\pi_{1}]=M_{2}[\pi_{2}^{e}]$ where $e$ is the
index of ramification of $L_{2}/L_{1},$ this last fact follows by
dévissage from the previous Lemma.
\end{proof}
We can now conclude the proof of the Proposition. No non-zero element
of $M_{2}^{\Gamma_{12}}$ is divisible by $\pi_{1}^{n}$ for all $n$,
because the same is true in $M_{2}$, which is finitely generated
over the DVR $\mathbf{\widetilde{A}}_{L_{2}}$. By a well-known version
of Nakayama's Lemma it is enough to prove that $M_{2}^{\Gamma_{12}}/\pi_{1}M_{2}^{\Gamma_{12}}$
is finite dimensional over $\mathbf{\widetilde{A}}_{L_{1}}/\pi_{1}\mathbf{\widetilde{A}}_{L_{1}}=F_{1}.$
By the last Lemma,
\[
M_{2}^{\Gamma_{12}}/\pi_{1}M_{2}^{\Gamma_{12}}=M_{2}^{\Gamma_{12}}/(\pi_{1}M_{2})^{\Gamma_{12}}\hookrightarrow(M_{2}/\pi_{1}M_{2})^{\Gamma_{12}},
\]
so it is enough to prove that $(M_{2}/\pi_{1}M_{2})^{\Gamma_{12}}$
is finite dimensional over $F_{1}.$ By dévissage, it is enough to
prove that $(M_{2}/\pi_{2}M_{2})^{\Gamma_{12}}$ is finite dimensional
over $F_{12}.$ This was established in Lemma \ref{lem:Finite dimension}.

\bigskip{}

With the Proposition being settled, we have checked that the two functors
$\mathcal{R}_{L_{1}}^{L_{2}}$ and $\mathcal{I}_{L_{1}}^{L_{2}}$
are well-defined on objects. Their definition on morphisms is self-evident
and is left to the reader.

\subsection{The main theorem}

It is the following.
\begin{thm}
\label{thm:Ind-Res}The two functors $\mathcal{R}_{L_{1}}^{L_{2}}$
and $\mathcal{I}_{L_{1}}^{L_{2}}$ respect the full subcategories
of étale $(\varphi,\Gamma)$-modules, and the diagram (\ref{eq:commutativity})
commutes.
\end{thm}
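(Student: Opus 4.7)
I would split the proof into (a) étale-preservation for both functors and (b) commutativity of each of the two squares in \eqref{eq:commutativity}. For étale-preservation both checks reduce to formal linearization. For $\mathcal{R}_{L_{1}}^{L_{2}}$, note that $\varphi_{2}|_{\mathbf{\widetilde{A}}_{L_{1}}}=\varphi_{1}^{d}$, so the linearization of $\varphi_{M_{2}}$ on $M_{2}=\mathbf{\widetilde{A}}_{L_{2}}\otimes_{\mathbf{\widetilde{A}}_{L_{1}}}M_{1}$ is the base change along $\mathbf{\widetilde{A}}_{L_{1}}\hookrightarrow\mathbf{\widetilde{A}}_{L_{2}}$ of the linearization of $\varphi_{M_{1}}^{d}$, which is bijective when $\varphi_{M_{1}}$ is. For $\mathcal{I}_{L_{1}}^{L_{2}}$, the decomposition $\mathcal{O}_{1}[\Phi]\otimes_{\mathcal{O}_{1}[\Phi^{d}]}M_{2}=\bigoplus_{i=0}^{d-1}\Phi^{i}\otimes M_{2}$ exhibits multiplication by $\Phi$ as the cyclic shift $\Phi^{i}\otimes m\mapsto\Phi^{i+1}\otimes m$ ($i<d-1$), $\Phi^{d-1}\otimes m\mapsto1\otimes\varphi_{M_{2}}(m)$, which is bijective iff $\varphi_{M_{2}}$ is.

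For the restriction square, I would construct the natural $\mathbf{\widetilde{A}}_{L_{2}}$-linear map
\[
\Theta\colon\mathbf{\widetilde{A}}_{L_{2}}\otimes_{\mathbf{\widetilde{A}}_{L_{1}}}(\mathbf{\widetilde{A}}_{1}\otimes_{\mathcal{O}_{1}}V)^{H_{1}}\longrightarrow(\mathbf{\widetilde{A}}_{2}\otimes_{\mathcal{O}_{1}}V)^{H_{2}},\quad\lambda\otimes x\mapsto\lambda\cdot x,
\]
which makes sense thanks to the inclusions $H_{2}\subset H_{1}$ and $\mathbf{\widetilde{A}}_{1}\subset\mathbf{\widetilde{A}}_{2}$; the target is canonically $\mathcal{D}_{2}(\mathrm{Res}_{L_{1}}^{L_{2}}V)$ since $\mathbf{\widetilde{A}}_{2}\otimes_{\mathcal{O}_{2}}(\mathcal{O}_{2}\otimes_{\mathcal{O}_{1}}V)=\mathbf{\widetilde{A}}_{2}\otimes_{\mathcal{O}_{1}}V$. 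Equivariance under $\varphi$ and $\Gamma_{2}$ is a direct check, using $\varphi_{2}|_{\mathbf{\widetilde{A}}_{1}}=\varphi_{1}^{d}$ and the restriction map $r\colon\Gamma_{2}\to\Gamma_{1}$. To prove $\Theta$ is an isomorphism, I tensor with $\mathbf{\widetilde{A}}_{2}$ over $\mathbf{\widetilde{A}}_{L_{2}}$: both sides become canonically $\mathbf{\widetilde{A}}_{2}\otimes_{\mathcal{O}_{1}}V$ via the bijective maps $ad_{V}$ associated to $\mathcal{D}_{1}$ and $\mathcal{D}_{2}$ respectively, and in that common identification $\mathrm{id}\otimes\Theta$ becomes the identity. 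Galois descent along $\mathbf{\widetilde{A}}_{L_{2}}=\mathbf{\widetilde{A}}_{2}^{H_{2}}\subset\mathbf{\widetilde{A}}_{2}$ then promotes this to $\Theta$ itself being an isomorphism.

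For the induction square, my preferred path is via adjoints. Frobenius reciprocity gives the adjunction $(\mathrm{Res}_{L_{1}}^{L_{2}},\mathrm{Ind}_{L_{1}}^{L_{2}})$ on the representation side, so combined with the already-established restriction square and the equivalences $\mathcal{D}_{i}\dashv\mathcal{V}_{i}$, the composite $\mathcal{D}_{1}\circ\mathrm{Ind}_{L_{1}}^{L_{2}}\circ\mathcal{V}_{2}$ is automatically right adjoint to $\mathcal{R}_{L_{1}}^{L_{2}}$. It therefore suffices to check that $(\mathcal{R}_{L_{1}}^{L_{2}},\mathcal{I}_{L_{1}}^{L_{2}})$ is itself an adjoint pair, whereupon uniqueness of right adjoints yields the desired natural isomorphism $\mathcal{I}_{L_{1}}^{L_{2}}\circ\mathcal{D}_{2}\simeq\mathcal{D}_{1}\circ\mathrm{Ind}_{L_{1}}^{L_{2}}$. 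The candidate unit sends $\phi\in\mathrm{Hom}(M_{1},\mathcal{I}_{L_{1}}^{L_{2}}(M_{2}))$ to $\widetilde{\phi}\in\mathrm{Hom}(\mathcal{R}_{L_{1}}^{L_{2}}(M_{1}),M_{2})$ with $\widetilde{\phi}(\lambda\otimes m)=\lambda\cdot m_{0}$, where $\phi(m)(1)=\sum_{i=0}^{d-1}\Phi^{i}\otimes m_{i}$; checking $\widetilde{\phi}$ is $\mathbf{\widetilde{A}}_{L_{2}}$-linear and $(\varphi,\Gamma_{2})$-equivariant uses precisely the twisted $\mathbf{\widetilde{A}}_{L_{1}}$-action formula and the relation $\phi(m)(r(\gamma_{2}))=(1\otimes\gamma_{2})\phi(m)(1)$ built into the definition of $\mathcal{I}$.

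The main obstacle is to invert this correspondence: given $\widetilde{\phi}$, one must reconstruct the higher $\Phi^{i}$-coefficients of $\phi(m)(1)$ and the values $\phi(m)(\gamma)$ for $\gamma\notin r(\Gamma_{2})$, none of which are visible a priori in $\widetilde{\phi}$. The reconstruction must exploit the $\Gamma_{12}$-invariance automatically enjoyed by the $m_{i}$ (so the values at coset representatives of $r(\Gamma_{2})\backslash\Gamma_{1}$ are forced by the semilinear constraint) together with a rank count matching $[L_{2}:L_{1}]$ with the product $[\Gamma_{1}:r(\Gamma_{2})]\cdot d$. Should this adjoint route prove too indirect, a more explicit alternative is to realize $\mathbf{\widetilde{A}}_{1}\otimes_{\mathcal{O}_{1}}\mathrm{Ind}_{L_{1}}^{L_{2}}(W)$ as a direct sum indexed by $G_{2}\backslash G_{1}$, take $H_{1}$-invariants coset-by-coset using Lemma \ref{lem:Finite dimension} and the vanishing of $H^{1}_{cont}(\Gamma_{12},-)$, and match termwise with the decomposition of $\mathcal{I}_{L_{1}}^{L_{2}}(\mathcal{D}_{2}(W))$ into $\Phi$-layers times coset representatives of $r(\Gamma_{2})\backslash\Gamma_{1}$.
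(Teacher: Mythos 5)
Your étale-preservation checks and your treatment of the restriction square are correct. For the restriction square you take a genuinely different route from the paper: the paper computes elementary divisors of source and target of $\alpha=\Theta$ via Proposition \ref{prop:elementary divisors}, reduces to surjectivity by Lemma \ref{lem:surjectivity lemma}, and checks surjectivity modulo $\pi_{2}$ by exhibiting an $H_{1}$-fixed basis; you instead base-change along $\mathbf{\widetilde{A}}_{L_{2}}\subset\mathbf{\widetilde{A}}_{2}$, identify both sides with $\mathbf{\widetilde{A}}_{2}\otimes_{\mathcal{O}_{1}}V$ via the two bijections $ad_{V}$, and descend by faithful flatness of $\mathbf{\widetilde{A}}_{2}$ over the discrete valuation ring $\mathbf{\widetilde{A}}_{L_{2}}$. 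This is a clean alternative (you should say a word about why $1\otimes\Theta$ agrees with the composite of the two $ad$-identifications, but that is a routine diagram chase), and it avoids the elementary-divisor bookkeeping entirely.

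The induction square, however, is where the real content of the theorem lies, and your proposal leaves it open. The adjunction route is precisely the alternative the paper mentions and declines to pursue; to make it work you must actually \emph{prove} that $(\mathcal{R}_{L_{1}}^{L_{2}},\mathcal{I}_{L_{1}}^{L_{2}})$ is an adjoint pair on the étale subcategories, and you concede that you cannot yet invert $\phi\mapsto\widetilde{\phi}$. That inversion is not a formality: given $\widetilde{\phi}$ one must produce all $d$ coefficients $m_{i}(\gamma)$ at all cosets of $r(\Gamma_{2})$ in $\Gamma_{1}$ and verify $\mathbf{\widetilde{A}}_{L_{1}}$-linearity against the twisted action $(\lambda f)(\gamma)=\sum\Phi^{i}\otimes(\varphi_{1}^{-i}\circ\gamma)(\lambda)m_{i}(\gamma)$, and nothing in the proposal accomplishes this. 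The paper's actual argument is a direct computation with no adjunction: one unwinds $\mathcal{D}_{1}(\mathrm{Ind}_{L_{1}}^{L_{2}}V_{2})$ as $H_{1}$-invariant functions on $G_{1}$ valued in a triple tensor product (over the big ring $\widehat{\mathbf{A}}$ of §\ref{subsec:A-remark-on-the-use}, which is used to make the invariants computable), applies the twist $f^{\sharp}(\gamma)=(\gamma\otimes1\otimes1)(f(\gamma))$ to convert $H_{1}$-equivariance into genuine right-$H_{1}$-invariance (so $f^{\sharp}$ descends to $\Gamma_{1}$) and to turn the values into elements of $(\widehat{\mathbf{A}}_{1}\otimes_{\mathcal{O}_{1}}V_{2})^{H_{2}}$, and finally identifies that module with $\mathcal{O}_{1}[\Phi]\otimes_{\mathcal{O}_{1}[\Phi^{d}]}M_{2}$ via the splitting $W(\kappa_{2})\otimes_{W(\kappa_{1})}W(\kappa_{2})\simeq W(\kappa_{2})^{d}$, $a\otimes b\mapsto(a\varphi_{1}^{i}(b))_{i}$. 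The twist $f\mapsto f^{\sharp}$ is exactly what generates the non-obvious semilinear $\mathbf{\widetilde{A}}_{L_{1}}$-action in the definition of $\mathcal{I}_{L_{1}}^{L_{2}}$; this mechanism, or an equivalent substitute, is the missing idea in your sketch, and your "more explicit alternative" (coset-by-coset $H_{1}$-invariants plus a rank count) is also only named, not carried out.
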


\begin{proof}
We shall show that for $M_{1}\in\mathrm{Mod}_{\varphi,\Gamma}^{\acute{e}t}(L_{1})$
\begin{equation}
\mathcal{R}_{L_{1}}^{L_{2}}(M_{1})=\mathcal{D}_{2}\circ Res_{L_{1}}^{L_{2}}\circ\mathcal{V}_{1}(M_{1}),\label{eq:Restriction}
\end{equation}
and that for $M_{2}\in\mathrm{Mod}_{\varphi,\Gamma}^{\acute{e}t}(L_{2})$
\begin{equation}
\mathcal{I}_{L_{1}}^{L_{2}}(M_{2})=\mathcal{D}_{1}\circ Ind_{L_{1}}^{L_{2}}\circ\mathcal{V}_{2}(M_{2}).\label{eq:Induction}
\end{equation}
This will imply both that the functors respect étale $(\varphi,\Gamma)$-modules,
and that the diagram commutes on objects. As mentioned above, we leave
to the reader to check that it commutes on morphisms as well.

\bigskip{}

We start with $\mathcal{R}_{L_{1}}^{L_{2}}$ and note that (\ref{eq:Restriction})
is equivalent to the statement that for $V_{1}=\mathcal{V}_{1}(M_{1})\in\mathrm{Rep}_{\mathcal{O}_{1}}(L_{1})$
\[
\left(\mathbf{\widetilde{A}}_{2}\otimes_{\mathcal{O}_{2}}(\mathcal{O}_{2}\otimes_{\mathcal{O}_{1}}V_{1})\right)^{H_{2}}=\mathbf{\widetilde{A}}_{L_{2}}\otimes_{\mathbf{\widetilde{A}}_{L_{1}}}(\mathbf{\widetilde{A}}_{1}\otimes_{\mathcal{O}_{1}}V_{1})^{H_{1}},
\]
as submodules of $\mathbf{\widetilde{A}}_{2}\otimes_{\mathcal{O}_{1}}V_{1}.$
Thus we have to show that the natural map
\[
\alpha:\mathbf{\widetilde{A}}_{L_{2}}\otimes_{\mathbf{\widetilde{A}}_{L_{1}}}(\mathbf{\widetilde{A}}_{1}\otimes_{\mathcal{O}_{1}}V_{1})^{H_{1}}\to\left(\mathbf{\widetilde{A}}_{2}\otimes_{\mathcal{O}_{1}}V_{1}\right)^{H_{2}}
\]
is bijective. Let $[V_{1}:\mathcal{O}_{1}]=(r;e_{1},\dots,e_{n})$,
and let $X$ be either the source or the target of $\alpha$. Then
Proposition \ref{prop:elementary divisors} implies that
\[
[X:\mathbf{\widetilde{A}}_{L_{2}}]=(r;ee_{1},\dots,ee_{n}),
\]
where $e$ is the ramification index of $L_{2}/L_{1}.$ By Lemma \ref{lem:surjectivity lemma}
it is enough to prove that $\alpha$ is surjective, and for that it
is enough to prove, by Nakayama's Lemma, that it is surjective modulo
$\pi_{2}$. We may therefore assume that $V_{1}$ is a $\kappa_{1}$-vector
space representation of $G_{1}$ and show that
\[
\alpha:F_{2}\otimes_{F_{1}}(F_{1}^{sep}\otimes_{\kappa_{1}}V_{1})^{H_{1}}\to(F_{2}^{sep}\otimes_{\kappa_{1}}V_{1})^{H_{2}}
\]
is an isomorphism of $F_{2}$-vector spaces. Pick a basis $w_{1},\dots,w_{t}$
of $F_{1}^{sep}\otimes_{\kappa_{1}}V_{1}$ over $F_{1}^{sep}$ which
is fixed by $H_{1}$. The vectors $1\otimes w_{i}$ then form a basis
of the left hand side over $F_{2}$ and are mapped by $\alpha$ to
a basis of the right hand side over $F_{2}$. This concludes the proof
of (\ref{eq:Restriction}).

\bigskip{}

We next show (\ref{eq:Induction}) by a direct computation. An alternative
approach, which works equally well, and which we do not pursue, is
to prove the adjointness of $\mathcal{I}_{L_{1}}^{L_{2}}$ and $\mathcal{R}_{L_{1}}^{L_{2}}.$
Below, it will be convenient to make use of the remark from §\ref{subsec:A-remark-on-the-use},
replacing $\widetilde{\mathbf{A}}_{i}$ with $\widehat{\mathbf{A}}_{i}.$
Start with $M_{2}\in\mathrm{Mod}_{\varphi,\Gamma}^{\acute{e}t}(L_{2}),$
$V_{2}=\mathcal{V}_{2}(M_{2})$ as usual. Then $\mathcal{I}_{L_{1}}^{L_{2}}(M_{2})=\mathcal{D}_{1}\circ Ind_{L_{1}}^{L_{2}}\circ\mathcal{V}_{2}(M_{2})$
is
\[
\left\{ \widehat{\mathbf{A}}_{1}\otimes_{\mathcal{O}_{1}}\mathrm{Fun}_{G_{2}}(G_{1},(\widehat{\mathbf{A}}_{2}\otimes_{\mathbf{\widetilde{A}}_{L_{2}}}M_{2})^{\varphi^{d}\otimes\varphi_{2}})\right\} ^{H_{1}}
\]
where $\mathrm{Ind}_{L_{1}}^{L_{2}}V_{2}=\mathrm{Fun}_{G_{2}}(G_{1},V_{2})$
is the space of functions $f:G_{1}\to V_{2}$ satisfying $f(\gamma_{2}\gamma)=\gamma_{2}f(\gamma)$
for all $\gamma_{2}\in G_{2}.$ We regard this space merely as an
$\mathcal{O}_{1}$-module and let $G_{1}$ act on it by right translation:
$(\gamma_{1}f)(\gamma)=f(\gamma\gamma_{1}).$ Since $[G_{1}:G_{2}]<\infty$
the above module is the same as
\[
\mathrm{Fun}_{G_{2}}(G_{1},\widehat{\mathbf{A}}_{1}\otimes_{\mathcal{O}_{1}}\widehat{\mathbf{A}}_{2}\otimes_{\mathbf{\widetilde{A}}_{L_{2}}}M_{2})^{1\otimes\varphi^{d}\otimes\varphi_{2},H_{1}}.
\]
Here the action of $G_{2}$ on the triple tensor product is via the
last two factors only. More precisely, $\gamma_{2}\in G_{2}$ acts
on it via $1\otimes\gamma_{2}\otimes\overline{\gamma}_{2}$ where
$\overline{\gamma}_{2}$ is its image in $\Gamma_{2}=G_{2}/H_{2}.$
The action of $\gamma_{1}\in H_{1}$ is via right translation on $G_{1}$
and via $\gamma_{1}\otimes1\otimes1$ on the triple tensor product,
namely $(\gamma_{1}f)(\gamma)=(\gamma_{1}\otimes1\otimes1)(f(\gamma\gamma_{1})).$

For $f$ in the last space of functions define $f^{\sharp}$ by
\[
f^{\sharp}(\gamma)=(\gamma\otimes1\otimes1)(f(\gamma))
\]
($\gamma\in G_{1})$. Note that $f\mapsto f^{\sharp}$ is \emph{not}
$\mathbf{\widetilde{A}}_{L_{1}}$ -linear. However, the $H_{1}$-invariance
of $f$ is translated to the invariance of $f^{\sharp}$ under right
translation by $H_{1}$, so we can regard $f^{\sharp}$ as a function
on $G_{1}/H_{1}=\Gamma_{1}.$ In addition, the condition $f(\gamma_{2}\gamma)=(1\otimes\gamma_{2}\otimes\overline{\gamma}_{2})(f(\gamma))$
($\gamma_{2}\in G_{2})$ gets translated to the condition
\[
f^{\sharp}(\gamma_{2}\gamma)=(\gamma_{2}\gamma\otimes1\otimes1)(f(\gamma_{2}\gamma))=(\gamma_{2}\gamma\otimes\gamma_{2}\otimes\overline{\gamma}_{2})(f(\gamma))=(\gamma_{2}\otimes\gamma_{2}\otimes\overline{\gamma}_{2})(f^{\sharp}(\gamma)).
\]
Now if $\gamma_{2}\in H_{2}$, it also lies in $H_{1}$, which is
normalized by $G_{1},$ so $f^{\sharp}(\gamma_{2}\gamma)=f^{\sharp}(\gamma\gamma^{-1}\gamma_{2}\gamma)=f^{\sharp}(\gamma)$
by the right-invariance of $f^{\sharp}$ under $H_{1}$. We conclude
that
\[
f^{\sharp}(\gamma)\in(\widehat{\mathbf{A}}_{1}\otimes_{\mathcal{O}_{1}}\widehat{\mathbf{A}}_{2}\otimes_{\mathbf{\widetilde{A}}_{L_{2}}}M_{2})^{1\otimes\varphi^{d}\otimes\varphi_{2},H_{2}}
\]
where the action of $H_{2}$ this time is diagonal, on all three factors.
The group on the right is nothing but $(\widehat{\mathbf{A}}_{1}\otimes_{\mathcal{O}_{1}}V_{2})^{H_{2}}$.

We have reached the following description. With $V_{2}=\mathcal{V}_{2}(M_{2}),$
$\mathcal{I}_{L_{1}}^{L_{2}}(M_{2})$ may be identified with the space
of functions $f^{\sharp}:\Gamma_{1}\to(\widehat{\mathbf{A}}_{1}\otimes_{\mathcal{O}_{1}}V_{2})^{H_{2}}$
satisfying $f^{\sharp}(r(\gamma_{2})\gamma)=\gamma_{2}f^{\sharp}(\gamma)$
for every $\gamma_{2}\in\Gamma_{2}$ and $\gamma\in\Gamma_{1}.$ It
remains to identify the group $(\widehat{\mathbf{A}}_{1}\otimes_{\mathcal{O}_{1}}V_{2})^{H_{2}}$
with $\mathcal{O}_{1}[\Phi]\otimes_{\mathcal{O}_{1}[\Phi^{d}]}M_{2}$
and to calculate the resulting actions of $\mathbf{\widetilde{A}}_{L_{1}}$,
$\varphi_{1}$ and $\Gamma_{1}$.

Write $\widehat{\mathbf{A}}=W(\mathbb{C}_{p}^{\flat}).$ This ring
does not depend on $L$ and $\widehat{\mathbf{A}}_{i}=\mathcal{O}_{i}\otimes_{W(\kappa_{i})}\widehat{\mathbf{A}}$
~($i=1,2).$ We get
\[
(\widehat{\mathbf{A}}_{1}\otimes_{\mathcal{O}_{1}}V_{2})^{H_{2}}=(\widehat{\mathbf{A}}\otimes_{W(\kappa_{1})}V_{2})^{H_{2}}=(\widehat{\mathbf{A}}\otimes_{W(\kappa_{2})}[W(\kappa_{2})\otimes_{W(\kappa_{1})}W(\kappa_{2})]\otimes_{W(\kappa_{2})}V_{2})^{H_{2}}.
\]
Now use the isomorphism
\[
W(\kappa_{2})\otimes_{W(\kappa_{1})}W(\kappa_{2})\simeq W(\kappa_{2})^{d},\,\,\,\,\,a\otimes b\mapsto(a\varphi_{1}^{i}(b))_{i=0}^{d-1}
\]
to identify the module $(\widehat{\mathbf{A}}_{1}\otimes_{\mathcal{O}_{1}}V_{2})^{H_{2}}$
with the direct sum of $d$ copies of $M_{2}=(\widehat{\mathbf{A}}_{2}\otimes_{\mathcal{O}_{2}}V_{2})^{H_{2}}=(\widehat{\mathbf{A}}\otimes_{W(\kappa_{2})}V_{2})^{H_{2}}$
as desired.

We leave as an exercise tracing the various identifications and verifying
that the resulting actions of $\mathbf{\widetilde{A}}_{L_{1}},$ $\Gamma_{1}$
and $\varphi_{1}$ are as indicated in the definition of $\mathcal{I}_{L_{1}}^{L_{2}}(M_{2}).$
The non-obvious action of $\mathbf{\widetilde{A}}_{L_{1}}$ results
from the replacement of $f$ with $f^{\sharp}.$
\end{proof}

\section{The ring $\mathbf{\widetilde{A}}_{L}^{\dagger}$ and overconvergence}

\subsection{The rings of overconvergent periods}

We recall the definition of the subring $\mathbf{\widetilde{A}}_{L}^{\dagger}$
(resp. $\mathbf{\widetilde{A}}^{\dagger}$) of $\mathbf{\widetilde{A}}_{L}$
(resp. $\mathbf{\widetilde{A}}$) consisting of overconvergent periods.
This will still be a discrete valuation ring with uniformizer $\pi$
and residue field $F$ (resp. $F^{sep}$), dense in $\mathbf{\widetilde{A}}_{L}$
(resp. $\mathbf{\widetilde{A}}^{\dagger}$) in the weak topology.
The automorphism $\varphi$ and the action of $\Gamma$ (resp. $G$)
will be induced by the corresponding actions on $\mathbf{\widetilde{A}}_{L}$
(resp. $\mathbf{\widetilde{A}}$).

To define this ring introduce, for $0\le r\le\infty$ and $x\in\mathbf{\widetilde{A}}_{L}$,
a ``norm'' $|x|_{r}\ge0,$ which nevertheless may be infinite. Write
$x=\sum_{n=0}^{\infty}\pi^{n}\tau(\xi_{n})$ with $\xi_{n}\in F$
and let, if $r<\infty$
\[
|x|_{r}=\sup_{n}\{q^{-n}|\xi_{n}|_{\flat}^{r}\}.
\]
If $r=0$, $|\xi|_{\flat}^{r}=1$ if $\xi\ne0$ and is $0$ if $\xi=0$.
Thus $|x|_{0}$ is always finite and is nothing but the norm associated
with the discrete valuation. When $0\le s<r$ we have
\[
|x|_{s}\le|x|_{r}^{s/r}
\]
so if $|x|_{r}<\infty$ also $|x|_{s}<\infty$. Finally if $r=\infty$
we put
\[
|x|_{\infty}=\sup_{n}\{|\xi_{n}|_{\flat}\},
\]
so that $|x|_{r}\le|x|_{\infty}^{r}.$ Note that if $x\in W(\mathcal{O}_{F})_{L}$
then $|x|_{r}<\infty$ for any $0\le r\le\infty.$

It is not hard to see (\cite{=00005BKe=00005D}, Lemma 1.7.2) that
each $|\cdot|_{r}$ is a multiplicative norm on the subring $\mathbf{\widetilde{A}}_{L}^{(0,r)}$
of $x\in\mathbf{\widetilde{A}}_{L}$ for which $|x|_{r}<\infty$.
This subring grows when $r$ decreases and we let $\mathbf{\widetilde{A}}_{L}^{\dagger}$
be the ring of all $x\in\mathbf{\widetilde{A}}_{L}$ for which there
exists an $r>0$ with $|x|_{r}<\infty,$ i.e.
\[
\mathbf{\widetilde{A}}_{L}^{\dagger}=\bigcup_{r>0}\mathbf{\widetilde{A}}_{L}^{(0,r)}.
\]

The ring $\mathbf{\widetilde{A}}^{\dagger}$ is defined in precisely
the same manner, using the extension of the norm $|\xi|_{\flat}$
to $\xi\in F^{sep}.$ It is clear that
\[
\pi^{n}\mathbf{\widetilde{A}}_{L}^{\dagger}=\mathbf{\widetilde{A}}_{L}^{\dagger}\cap\pi^{n}\mathbf{\widetilde{A}}_{L}.
\]
We claim that $\mathbf{\widetilde{A}}_{L}^{\dagger}$ is a discrete
valuation ring with $\pi$ as a prime. To see this it is enough to
check that if $x\in\mathbf{\widetilde{A}}_{L}^{\dagger}$ is invertible
in $\mathbf{\widetilde{A}}_{L}$ then its inverse $x^{-1}\in\mathbf{\widetilde{A}}_{L}^{\dagger}$.
This follows at once from the multiplicativity of the norm $|\cdot|_{r}.$
The residue field of $\mathbf{\widetilde{A}}_{L}^{\dagger}$ is contained
in the residue field of $\mathbf{\widetilde{A}}_{L}$, i.e. in $F,$
and contains $W(\mathcal{O}_{F})_{L}/\pi W(\mathcal{O}_{F})_{L}=\mathcal{O}_{F}$,
hence must be equal to $F.$ Similar arguments apply to $\widetilde{\mathbf{A}}^{\dagger}$. 

The automorphism $\varphi$ intertwines $|\cdot|_{r}$ and $|\cdot|_{qr}$,
while $\Gamma$ or $G$ preserve $|\cdot|_{r}$. This implies that
$\mathbf{\widetilde{A}}_{L}^{\dagger}$ and $\widetilde{\mathbf{A}}^{\dagger}$
are invariant under $\varphi$ and the $\Gamma$ or $G$ action, which
are continuous in the weak topology.

\bigskip{}

We conclude by giving an alternative description of $\mathbf{\widetilde{A}}_{L}^{\dagger}$
which explains the name \emph{overconvergent} periods. For $0\le r<\infty$
let
\[
R_{r}=q^{-rq/(q-1)}.
\]
As $r\to0$ from above, $R_{r}\to1$ from below.
\begin{prop}
Let $x\in\mathbf{\widetilde{A}}_{L}$ correspond to the power series
$f=\sum_{m\in\mathbb{Z}[p^{-1}]}a_{m}X^{m}$ ($a_{m}\in W_{L}$) under
the isomorphism of Proposition \ref{prop:Another look at A_L}. Then
for $0\le r<\infty$
\[
|x|_{r}=||f||_{r}:=\sup_{m}\{|a_{m}|R_{r}^{m}\}.
\]
 
\end{prop}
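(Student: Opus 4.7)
The plan is to reduce to Laurent ``polynomials'' (finite sums $\sum_{m \in S} a_m X^m$), verify the formula there by a genericity-in-$r$ argument, and pass to general $f$ by continuity.

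\emph{Monomials and upper bound.} Since $X = \tau(\omega)$ and the Teichm\"uller lift is multiplicative, $X^m = \tau(\omega^m)$ for every $m \in \mathbb{Z}[p^{-1}]$. From the definition of $|\cdot|_r$ on a Teichm\"uller element together with $|\omega|_\flat = |\pi|^{q/(q-1)}$, one gets $|X^m|_r = |\omega|_\flat^{mr} = R_r^m$. For $a \in W_L$, the Teichm\"uller expansion $a = \sum_n \pi^n \tau(\overline{a}_n)$ has $\overline{a}_n \in \overline{\kappa}$ with $|\overline{a}_n|_\flat \in \{0,1\}$, so $|a|_r = |a|$, independent of $r$. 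Multiplicativity of $|\cdot|_r$ on $\mathbf{\widetilde{A}}_L^{(0,r)}$ (\cite{=00005BKe=00005D}, Lemma 1.7.2) then gives $|a_m X^m|_r = |a_m| R_r^m$, and the strong triangle inequality immediately yields the upper bound $|x|_r \le \sup_m |a_m X^m|_r = \|f\|_r$.

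\emph{Laurent polynomial case.} Suppose next that $f = \sum_{m \in S} a_m X^m$ with $S \subset \mathbb{Z}[p^{-1}]$ finite. The quantities $-\log_q |a_m X^m|_r = -\log_q |a_m| + mrq/(q-1)$ agree for distinct $m \in S$ only when $rq/(q-1) \in \mathbb{Q}$. Hence for any $r > 0$ with $rq/(q-1) \notin \mathbb{Q}$, the maximum is attained at a unique index $m^*$, and the strict form of the ultrametric inequality --- valid because $|\cdot|_r$ is multiplicative --- forces $|f|_r = |a_{m^*} X^{m^*}|_r = \|f\|_r$. Both $r \mapsto |f|_r$ and $r \mapsto \|f\|_r$ are continuous in $r$ (being maxima of finitely many continuous functions), so this equality extends to all $r > 0$ by density of the irrationality locus.

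\emph{General series and main obstacle.} When $\|f\|_r < \infty$, the finiteness condition of Proposition \ref{prop:Another look at A_L} together with this assumption forces $|a_m X^m|_r \to 0$ as $|m| \to \infty$; Laurent polynomial truncations $f_N$ then satisfy $\|f - f_N\|_r \to 0$, whence $|f - f_N|_r \to 0$ by the upper bound, and passing to the limit in the Laurent case yields $|f|_r = \|f\|_r$. If $\|f\|_r = \infty$, partial sums already have unbounded $\|\cdot\|_r$-- and hence $|\cdot|_r$--norms, forcing $|f|_r = \infty$ as well. The main obstacle is the Laurent polynomial step: promoting the ultrametric bound to an equality relies essentially on the multiplicativity of $|\cdot|_r$, which is the substantive ingredient and the reason the distinguished subring $\mathbf{\widetilde{A}}_L^{(0,r)}$ enters the argument.
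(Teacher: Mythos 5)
Your argument takes a genuinely different route from the paper's (which expands $x=\sum_{n}\pi^{n}\tau(\xi_{n})$ and matches it against $f$ digit by digit, peeling off the leading Teichm\"uller term and recursing), and for the case $||f||_{r}<\infty$ with $r>0$ it works, modulo two repairs. First, the strict ultrametric equality at a unique maximizing index needs only the ultrametric inequality, not multiplicativity; multiplicativity is what you need for $|a_{m}X^{m}|_{r}=|a_{m}|R_{r}^{m}$. Second, $r\mapsto|f|_{r}$ is a supremum of \emph{infinitely} many functions $q^{-n}|\xi_{n}|_{\flat}^{r}$ even when $f$ is a Laurent polynomial in $X$ (such an $f$ has infinitely many Teichm\"uller digits in general), so "maxima of finitely many continuous functions" does not apply to the left-hand side; continuity does hold, but you should get it from convexity of $\log|f|_{r}$ as a sup of affine functions which is finite on $[0,\infty)$ by your upper bound. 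Note also that $r=0$ escapes both the genericity step and the truncation step (for $f=\sum_{k\ge0}X^{k}$ one has $||f-f_{N}||_{0}=1$ for all $N$, so $|a_{m}X^{m}|_{0}\not\to0$); that case needs the separate, trivial remark that $|x|_{0}$ is the $\pi$-adic norm and reduction mod $\pi$ is injective on power series.

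The genuine gap is the case $||f||_{r}=\infty$. You claim that unbounded partial sums force $|f|_{r}=\infty$, but the ultrametric inequality only gives $|g_{N}|_{r}\le\max\{|f|_{r},|f-g_{N}|_{r}\}$ for a partial sum $g_{N}$, and the tail $f-g_{N}$ also has infinite $||\cdot||_{r}$-norm, so no contradiction with $|f|_{r}<\infty$ is reached. The missing ingredient --- that truncating $f$ in the exponent $m$ does not increase $|\cdot|_{r}$ --- is essentially the coefficient-extraction bound the proposition is asserting. Nor can one always reach $r$ as a limit from below: there are admissible $f$ with $||f||_{r_{0}}<\infty$ but $||f||_{r'}=\infty$ for every $r'>r_{0}$ (take $|a_{m_{k}}|=R_{r_{0}}^{-m_{k}}$ along a sequence $m_{k}\to-\infty$), so lower semicontinuity of $s\mapsto||f||_{s}$ together with $|x|_{s}\le|x|_{r}^{s/r}$ does not close this case either. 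This direction is not decorative: it is exactly what identifies $\mathbf{\widetilde{A}}_{L}^{(0,r)}$ with the bounded functions on $\widetilde{V}(R_{r},1)$ in the Corollary. The paper's recursion on Teichm\"uller digits avoids the issue entirely because it never separates the series according to $m$; you would need to supply an argument of that kind (or prove the contrapositive $|x|_{r}<\infty\Rightarrow||f||_{r}<\infty$ directly) to complete the proof.
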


\begin{proof}
Let $x=\sum_{n=0}^{\infty}\pi^{n}\tau(\xi_{n})$ and $f=\sum_{n=0}^{\infty}\sum_{m\in\mathbb{Z}[p^{-1}]}\pi^{n}\tau(\alpha_{n,m})X^{m}$
where $\alpha_{n,m}\in\overline{\kappa}$ are the ``digits'' of
$a_{m}.$ Clearly $||f||_{r}$ is the supremum of $q^{-n}R_{r}^{m}$
over the pairs $(n,m)$ such that $\alpha_{n,m}\ne0.$ Reduction modulo
$\pi$ shows that $\xi_{0}=\sum_{m}\alpha_{0,m}\omega^{m}$ hence
\[
|\tau(\xi_{0})|_{r}=|\xi_{0}|_{\flat}^{r}=\sup\{R_{r}^{m}\}
\]
where the sup is over the set of $m$ such that $\alpha_{0,m}\ne0.$
But this is also
\[
|\tau(\xi_{0})|_{r}=||f_{0}||_{r}
\]
where $f_{0}=\sum_{m\in\mathbb{Z}[p^{-1}]}\tau(\alpha_{0,m})X^{m}.$
Consider
\[
x'=x-\tau(\xi_{0}),\,\,f''=f-f_{0}.
\]
Then
\[
|x|_{r}=\sup\{|\tau(\xi_{0})|_{r},|x'|_{r}\},\,\,\,||f||_{r}=\sup\{||f_{0}||_{r},||f''||_{r}\}.
\]
The power series corresponding to $x'$ is not $f''$ but rather $f'=f-\tau(\xi_{0}).$
However,
\[
||f_{0}-\tau(\xi_{0})||_{r}\le||f_{0}||_{r}
\]
so $||f||_{r}=\sup\{||f_{0}||_{r},||f'||_{r}\}$ as well. We may now
divide both $x'$ and $f'$ by $\pi$ and continue recursively to
get the desired equality $|x|_{r}=||f||_{r}.$
\end{proof}
Let $V(R,1)=\{X|\,R<|X|<1\}$, regarded as a rigid analytic annulus
over (the fraction field of) $W_{L}$. Taking inverse limit with respect
to $X\mapsto X^{p}$ gives
\[
\widetilde{V}(R,1)=\lim_{\leftarrow}\left(V(R,1)\overset{p}{\leftarrow}V(R^{1/p},1)\overset{p}{\leftarrow}\cdots\right),
\]
which we regard as a \emph{preperfectoid space }in the sense of \cite{=00005BScho-We=00005D}.
A point of $\widetilde{V}(R,1)$ in some analytic field containing
$L$ amounts to giving compatible values to $X^{m}$, for every $m\in\mathbb{Z}[p^{-1}].$
\begin{cor}
(i) $\mathbf{\widetilde{A}}_{L}^{\dagger}$ consists of those $f\in\mathbf{\widetilde{A}}_{L}$
which converge on $\widetilde{V}(R,1)$ for some $R<1$.

(ii) For $0<r<\infty$ the ring $\mathbf{\widetilde{A}}_{L}^{(0,r)}$
consists of those $f\in\mathbf{\widetilde{A}}_{L}$ which converge
and are bounded on $\widetilde{V}(R_{r},1)$.

(iii) The ring $\mathbf{\widetilde{A}}_{L}^{(0,\infty)}$ consists
of those $f\in\mathbf{\widetilde{A}}_{L}$ which converge on $\widetilde{V}(0,1)$
and have a pole at $0.$
\end{cor}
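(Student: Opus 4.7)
My plan is to deduce all three parts from the identity $|x|_r = ||f||_r = \sup_m |a_m| R_r^m$ established in the previous proposition, by reinterpreting $||\cdot||_r$ as a sup-norm on the preperfectoid circle $|X| = R_r$ and then taking appropriate unions or limits in $r$.

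For (ii), evaluating $f$ at a point $y$ of $\widetilde{V}(R_r, 1)$ valued in an analytic extension $E/L$ gives compatible elements $X^m(y) \in E$ with $|X(y)| = t \in (R_r, 1)$. Together with the Newton-polygon-type condition from Proposition \ref{prop:Another look at A_L} (``for each $M$ only finitely many $m < M$ with $v_p(a_m) < M$''), the finiteness of $||f||_r$ forces the series $\sum a_m X^m(y)$ to converge in $E$ to an element of absolute value at most $||f||_r$, uniformly on $\widetilde{V}(R_r, 1)$. Conversely, if $f$ is analytic and bounded by $C$ on this annulus, specializing to rank-one points with $|X| = R_r$ in a sufficiently large analytic extension of $L$ yields $|a_m| R_r^m \le C$ for every $m$, hence $||f||_r \le C$. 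Part (i) then follows tautologically, since $\mathbf{\widetilde{A}}_L^\dagger = \bigcup_{r > 0} \mathbf{\widetilde{A}}_L^{(0,r)}$ and $R_r = q^{-rq/(q-1)}$ ranges over $(0, 1)$ as $r$ ranges over $(0, \infty)$.

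Part (iii) is not directly covered by the previous proposition (stated only for $0 \le r < \infty$), so I would argue it separately by analyzing the Teichm\"uller expansion. The condition $|x|_\infty = \sup_n |\xi_n|_\flat < \infty$ means that the digits $\xi_n \in F$ of $x$ admit a uniform lower bound on the exponents appearing in their expansions $\xi_n = \sum_m \alpha_{n,m} \omega^m$, which translates to $a_m = 0$ for $m$ below a fixed threshold in $\mathbb{Z}[p^{-1}]$; equivalently, $f$ has a pole of finite order at $X = 0$. Since $|x|_\infty < \infty$ implies $|x|_r < \infty$ for every finite $r > 0$, part (ii) then gives convergence and boundedness on $\widetilde{V}(R, 1)$ for every $R > 0$, hence convergence on $\widetilde{V}(0, 1) = \bigcup_{R > 0} \widetilde{V}(R, 1)$; the converse is a direct matching of the two conditions. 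The main technical obstacle I anticipate is the rigorous handling of the preperfectoid annulus $\widetilde{V}(R, 1)$, which is not a rigid space but an inverse limit under $X \mapsto X^p$: one must work through its functor of points on analytic $L$-algebras, or appeal to the preperfectoid formalism of \cite{=00005BScho-We=00005D}, both to justify the evaluation of our power series at its points and to guarantee the existence of rank-one points of any prescribed radius $|X| = R_r$ used in the converse direction of (ii).
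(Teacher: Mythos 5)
Your argument is correct and is essentially the paper's: both rest on the identity $|x|_{r}=\sup_{m}|a_{m}|R_{r}^{m}$ and read off convergence and boundedness on the annulus term by term, the only organizational difference being that the paper splits $f=f^{+}+f^{-}$ (with $\|f^{+}\|_{r}\le1$ automatic from $a_{m}\in W_{L}$) instead of evaluating the full series at points, and phrases (iii) as ``$X^{n}f\in W(\mathcal{O}_{F})_{L}$ for some $n$,'' which is exactly your condition on the Teichm\"uller digits. Two harmless slips to fix: values on the annulus are bounded by $\max(1,\|f\|_{r})$ rather than $\|f\|_{r}$ (positive exponents contribute up to $|a_{m}|$ near $|X|=1$), and in the converse of (ii) the circle $|X|=R_{r}$ lies outside the open annulus, so you should specialize at radii $t>R_{r}$ and let $t\to R_{r}$.
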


\begin{proof}
Write $f=f^{+}+f^{-}$ where $f^{+}=\sum_{0\le m\in\mathbb{Z}[p^{-1}]}a_{m}X^{m}$
and $f^{-}=\sum_{0>m\in\mathbb{Z}[p^{-1}]}a_{m}X^{m}.$ As $a_{m}\in W_{L}$,
$f^{+}$ is convergent on the open unit disk and $||f^{+}||_{r}\le1,$
while if $r<\infty$ $f^{-}$ is convergent on $\widetilde{V}(R_{r},1)$
and bounded there if and only if $||f^{-}||_{r}<\infty.$ For (iii)
note that $f\in\mathbf{\widetilde{A}}_{L}^{(0,\infty)}$ if and only
if for some $n$, $X^{n}f\in W(\mathcal{O}_{F})_{L}.$
\end{proof}
The ring of all bounded rigid analytic functions on some $\widetilde{V}(R,1)$
is the \emph{bounded (preperfectoid) Robba ring} $\widetilde{\mathbf{B}}_{L}^{\dagger}=\mathbf{\widetilde{A}}_{L}^{\dagger}[p^{-1}]$.
It is actually a field, the fraction field of $\mathbf{\widetilde{A}}_{L}^{\dagger}$,
but we shall have no use for it.

\subsection{Overconvergent $(\varphi,\Gamma)$-modules}

The definition of an overconvergent $(\varphi,\Gamma)$-module is
the same as the one of a formal $(\varphi,\Gamma)$-module, substituting
the ring $\mathbf{\widetilde{A}}_{L}^{\dagger}$ for $\mathbf{\widetilde{A}}_{L}$.
It is sometimes customary to impose in the definition a further continuity
condition on the action of $\Gamma$, on which we comment now. In
addition to the weak topology inherited from $\mathbf{\widetilde{A}}_{L}$,
the ring $\mathbf{\widetilde{A}}_{L}^{\dagger}$ has a ``limit of
Fréchet'' (LF) topology, resulting from the Fréchet topologies of
uniform convergence on affinoid sub-annuli on each $\mathbf{\widetilde{A}}_{L}^{(0,r)}$.
Just as for the weak topology, every finitely generated $\mathbf{\widetilde{A}}_{L}^{\dagger}$-module
is endowed then with a canonical LF topology, and one requires the
action of $\Gamma$ to be continuous in it as well. For étale $(\varphi,\Gamma)$-modules
over $\mathbf{\widetilde{A}}_{L}^{\dagger}$, continuity of the action
of $\Gamma$ in the weak topology most probably implies its continuity
in the LF topology (see Lemma 2.4.3 of \cite{=00005BKe=00005D}, under
the assumption that the module is trivial modulo $p$, an assumption
that ought to be irrelevant). We therefore do not impose continuity
in the LF topology as part of our definition.

We denote by
\[
\mathrm{Mod}{}_{\varphi,\Gamma}^{\dagger\acute{e}t}(L)\subset\mathrm{Mod}_{\varphi,\Gamma}^{\dagger}(L)
\]
the category of overconvergent $(\varphi,\Gamma)$-modules, and its
full subcategory of overconvergent étale $(\varphi,\Gamma)$-modules.
The Cherbonnier-Colmez theorem, in our setting, is the following.
\begin{thm}
\label{thm:o/c}(i) Base change from $\mathbf{\widetilde{A}}_{L}^{\dagger}$
to $\mathbf{\widetilde{A}}_{L}$ induces an equivalence of categories
between $\mathrm{Mod}_{\varphi,\Gamma}^{\dagger\acute{e}t}(L)$ and
$\mathrm{Mod}_{\varphi,\Gamma}^{\acute{e}t}(L)$.

(ii) Let $V\in\mathrm{Rep}_{\mathcal{O}}(L)$ and put $\mathcal{D}^{\dagger}(V):=(\mathbf{\widetilde{A}}_{L}^{\dagger}\otimes_{\mathcal{O}}V)^{H}.$
Then $\mathcal{D}^{\dagger}(V)\in\mathrm{Mod}_{\varphi,\Gamma}^{\dagger\acute{e}t}(L)$
and $\mathcal{D}(V)$ is the base change from $\mathbf{\widetilde{A}}_{L}^{\dagger}$
to $\mathbf{\widetilde{A}}_{L}$ of $\mathcal{D}^{\dagger}(V).$
\end{thm}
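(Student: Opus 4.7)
The plan is to prove (i) by constructing, for each étale $(\varphi,\Gamma)$-module $M$ over $\mathbf{\widetilde{A}}_L$, a $(\varphi,\Gamma)$-stable overconvergent lattice $M^{\dagger}$ with $\mathbf{\widetilde{A}}_L \otimes_{\mathbf{\widetilde{A}}_L^{\dagger}} M^{\dagger} = M$; part (ii) then follows by identifying $M^{\dagger}$ with $\mathcal{D}^{\dagger}(V)$ in the case $M = \mathcal{D}(V)$. I would first reduce to $M$ free of finite rank over $\mathbf{\widetilde{A}}_L$. Since $\mathbf{\widetilde{A}}_L^{\dagger}/\pi = \mathbf{\widetilde{A}}_L/\pi = F$, every $\pi$-torsion $(\varphi,\Gamma)$-module is tautologically overconvergent; dévissage on powers of $\pi$ handles finite torsion modules, and the torsion/torsion-free filtration of a general finitely generated $M$ over the DVR $\mathbf{\widetilde{A}}_L$ reduces matters to $M$ free.

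The heart of the argument, following Kedlaya, is the free case. Fix a basis and let $A \in GL_d(\mathbf{\widetilde{A}}_L)$ be the matrix of $\varphi$. I would build a change-of-basis $P = \prod_{n \ge 0}(I + \pi^{n+1} U_n) \in GL_d(\mathbf{\widetilde{A}}_L)$, converging in the weak topology, so that $A' = P^{-1} A \varphi(P)$ lies in $GL_d(\mathbf{\widetilde{A}}_L^{\dagger})$. At stage $n$, conjugation by $I + \pi^{n+1} U_n$ modifies the $(n+1)$-st Teichmüller digit of the current matrix by $A_0 \varphi(U_n) - U_n A_0$, where $A_0 = A \bmod \pi \in GL_d(F)$. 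Applying Lemma~\ref{lem: estimate} to $A_0$ and the bad part $B_n$ of that digit produces $U_n$ and a residual $V_n$ with $|V_n|_{\flat} \le c$, the constant $c$ depending only on $A_0$ and hence uniform in $n$. The resulting digits of $A'$ have $\flat$-valuation bounded by $c$, which, by the definition of $|\cdot|_r$, forces $|A'|_r < \infty$ for every $r > 0$; multiplicativity of $|\cdot|_r$ yields the same for $(A')^{-1}$. Thus $M^{\dagger}$, the $\mathbf{\widetilde{A}}_L^{\dagger}$-span of the new basis, is $\varphi$-stable and overconvergent.

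For $\Gamma$-stability, the key feature of our perfectoid setting (as stressed in the introduction) is that $\Gamma$ plays no role in the construction of $M^{\dagger}$; its $\Gamma$-stability is extracted after the fact. Writing $G_\gamma \in GL_d(\mathbf{\widetilde{A}}_L)$ for the matrix of $\gamma$ in the new basis, the commutation $\gamma\varphi = \varphi\gamma$ yields the fixed-point equation $G_\gamma = A \cdot \varphi(G_\gamma) \cdot \gamma(A)^{-1}$. Iterating expresses $G_\gamma$ as a product of overconvergent terms $\varphi^k(A)$ and $\varphi^k(\gamma(A))^{-1}$ times $\varphi^n(G_\gamma)$, whose $r$-norm equals the $(r/q^n)$-norm of $G_\gamma$ and converges as $n \to \infty$ to $|G_\gamma|_0 \le 1$. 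The factors $|\varphi^k(A)|_r = |A|_{r/q^k}$ decrease to $|A|_0 \le 1$ as $k \to \infty$, so the infinite products converge in a small $r$-norm, forcing $G_\gamma \in GL_d(\mathbf{\widetilde{A}}_L^{\dagger})$. Full faithfulness of base change is a routine consequence of applying the same fixed-point iteration to morphisms.

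For (ii), $\mathcal{D}^{\dagger}(V) \hookrightarrow \mathcal{D}(V)$ is tautological, and $\mathcal{D}^{\dagger}(V)$ is visibly $(\varphi,\Gamma)$-stable and overconvergent. To identify it with the $M^{\dagger}$ constructed from $M = \mathcal{D}(V)$, I would trace the new basis through the isomorphism $ad_V$ of Section~2: its images in $\mathbf{\widetilde{A}} \otimes_{\mathcal{O}} V$ have, in any $\mathcal{O}$-basis of $V$, coordinates satisfying the same overconvergence estimates as $A$ and $(A')^{\pm 1}$, hence lie in $\mathbf{\widetilde{A}}^{\dagger} \otimes_{\mathcal{O}} V$. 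This gives $M^{\dagger} \subseteq \mathcal{D}^{\dagger}(V)$, and the reverse inclusion of base changes is automatic from dimension considerations as in Proposition~\ref{prop:elementary divisors}. The main obstacle in the plan is the analytic bookkeeping of the second paragraph: the inductive step must be arranged so that the correction at stage $n$ does not spoil the bounds already achieved at earlier stages, and the uniformity of the constant $c$ across digits must be carefully tracked. Once this estimate is pinned down, the $\Gamma$-stability and part (ii) are formal consequences.
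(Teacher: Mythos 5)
Your overall architecture is the same as the paper's: dévissage to the free case, digit-by-digit construction of an overconvergent basis using Lemma \ref{lem: estimate} (your second paragraph is essentially the paper's Lemma \ref{lem:M_dagger}, and the worry about spoiling earlier digits is harmless, since the stage-$n$ correction is $\equiv 0 \bmod \pi^{n}$ and leaves $C_0,\dots,C_{n-1}$ untouched), and then $\Gamma$-stability and full faithfulness extracted afterwards from a Frobenius fixed-point equation on matrices. However, the analytic core of your third paragraph fails as written, for two reasons. First, the norm identity is reversed: with the paper's conventions $|\varphi(x)|_{r}=|x|_{qr}$, so $|\varphi^{k}(A')|_{r}=|A'|_{q^{k}r}$ and $|\varphi^{n}(G_{\gamma})|_{r}=|G_{\gamma}|_{q^{n}r}$, not $|A'|_{r/q^{k}}$ and $|G_{\gamma}|_{r/q^{n}}$. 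Iterating the relation $G_{\gamma}=A'\varphi(G_{\gamma})\gamma(A')^{-1}$ \emph{forward} therefore produces factors $\varphi^{k}(A')$ whose $r$-norms are $|A'|_{q^{k}r}$, which leave every $\mathbf{\widetilde{A}}_{L}^{(0,r)}$ and blow up (the annulus of convergence shrinks under $\varphi$); the product does not converge. The contraction goes the other way: one must rewrite the relation as $G_{\gamma}=(\varphi^{-1}(A'))^{-1}\varphi^{-1}(G_{\gamma})\varphi^{-1}(\gamma(A'))$ and iterate $\varphi^{-1}$, so that the accumulated factors have $r$-norm bounded by $|A'|_{r}^{q^{-1}+q^{-2}+\cdots}$. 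Second, even in the correct direction you cannot assert that $|\varphi^{-N}(G_{\gamma})|_{r}$ tends to $|G_{\gamma}|_{0}$: a priori $G_{\gamma}$ lies only in $GL_{d}(\mathbf{\widetilde{A}}_{L})$, so $|G_{\gamma}|_{s}$ may be infinite for every $s>0$ --- that is precisely what is to be disproved. One must first truncate, i.e.\ bound $|G_{\gamma}^{(n)}|_{r}$ for $G_{\gamma}^{(n)}=G_{\gamma}\bmod\pi^{n}$, which has finitely many Teichmüller digits and hence finite $r$-norms with $|\varphi^{-N}(G_{\gamma}^{(n)})|_{r}\to 1$, and then let $n\to\infty$. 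Both repairs are exactly the content of the paper's ``contracting property of Frobenius'' lemma, which is stated for a general relation $A\varphi(U)=UB$ with $A,B$ overconvergent and which the paper applies uniformly to get uniqueness of $M^{\dagger}$, $\Gamma$-stability (via $\gamma(M^{\dagger})=M^{\dagger}$ by uniqueness), and full faithfulness.

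A smaller remark on part (ii): your claim that the coordinates of the new basis in an $\mathcal{O}$-basis of $V$ ``satisfy the same overconvergence estimates'' is again an instance of the same fixed-point equation, now over $\mathbf{\widetilde{A}}$ rather than $\mathbf{\widetilde{A}}_{L}$ (the transition matrix $T$ satisfies $\varphi(T)=TA'$ with $A'$ overconvergent). The cleaner route, which the paper takes, is to observe that the existence and uniqueness of the overconvergent lattice hold verbatim for étale $\varphi$-modules over $\mathbf{\widetilde{A}}$, so that $\mathbf{\widetilde{A}}^{\dagger}\otimes_{\mathbf{\widetilde{A}}_{L}^{\dagger}}M^{\dagger}$ and $\mathbf{\widetilde{A}}^{\dagger}\otimes_{\mathcal{O}}V$ coincide inside $\mathbf{\widetilde{A}}\otimes_{\mathcal{O}}V$ by uniqueness, and one then takes $H$-invariants. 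Once the contraction lemma is stated and proved in the correct direction, the rest of your plan goes through.
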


See {[}Ke{]}, Theorem 2.4.5. The structure of $\Gamma$ is irrelevant
in the proof of that theorem (unlike the proof of \emph{loc.cit. }Theorem
2.6.2) so although it is phrased in the cyclotomic setting, it works
for $\Gamma=Gal(L^{ab}/L)$ as well. Also, {[}Ke{]} treats only torsion-free
representations and torsion-free modules, but our claim follows from
this easily, since $\mathbf{\widetilde{A}}_{L}^{\dagger}/\pi^{n}\mathbf{\widetilde{A}}_{L}^{\dagger}=\mathbf{\widetilde{A}}_{L}/\pi^{n}\mathbf{\widetilde{A}}_{L},$
so torsion modules are the same for the two rings.

As explained in the introduction, the Cherbonnier-Colmez theorem \emph{fails}
with $\mathbf{A}_{L}$ and $\mathbf{A}_{L}^{\dagger}$ replacing $\mathbf{\widetilde{A}}_{L}$
and $\mathbf{\widetilde{A}}_{L}^{\dagger}$ whenever $L\ne\mathbb{Q}_{p}.$

For completeness we give a self-contained proof of the theorem, based
on Lemma \ref{lem: estimate}. Kedlaya uses a similar estimate, but
only for $(\varphi,\Gamma)$-modules which are trivial modulo $p,$
something that can be achieved (in view of Theorem \ref{thm:Equivalence})
after restriction to a finite Galois extension $L'$ of $L$. He then
ends up using Galois descent to go back from $L'$ to $L$. We believe
that our proof is a little more transparent.

The notion of a $\varphi$-module over the ring $\widetilde{\mathbf{A}}_{L}=\mathcal{O}\otimes_{W(\kappa)}W(F)$
is defined as before, without any reference to the action of $\Gamma$,
and in fact makes sense (over $W(F)$) for any perfectoid field $F$
in characteristic $p$, whether realized as $K^{\flat}$ for some
characteristic $0$ perfectoid field $K$ or not. A $\varphi$-module
$M$ is étale if $\varphi$ is bijective. Similarly, one defines the
notion of an overconvergent (general or étale) $\varphi$-module over
$\widetilde{\mathbf{A}}_{L}^{\dagger}.$
\begin{lem}
\label{lem:M_dagger}Let $M$ be an étale $\varphi$-module over $\widetilde{\mathbf{A}}_{L}$.
Then there exists an overconvergent étale $\varphi$-module $M^{\dagger}$over
$\widetilde{\mathbf{A}}_{L}^{\dagger}$, contained in $M$, such that
the canonical map
\[
\widetilde{\mathbf{A}}_{L}\otimes_{\widetilde{\mathbf{A}}_{L}^{\dagger}}M^{\dagger}\to M
\]
is an isomorphism.
\end{lem}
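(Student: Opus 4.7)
The plan is to carry out a Kedlaya-style successive-approximation argument, using Lemma~\ref{lem: estimate} as the essential tool, in order to produce a basis of $M$ in which the matrix of $\varphi$ has uniformly bounded Teichm\"uller digits and therefore lies in $M_{d}(\widetilde{\mathbf{A}}_{L}^{\dagger})$.

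First I would reduce to the case where $M$ is free. Since $\widetilde{\mathbf{A}}_{L}^{\dagger}/\pi^{n}\widetilde{\mathbf{A}}_{L}^{\dagger}=\widetilde{\mathbf{A}}_{L}/\pi^{n}\widetilde{\mathbf{A}}_{L}$, any $\pi$-power-torsion $\varphi$-module already has a canonical overconvergent structure; d\'evissage on $0\to M_{\mathrm{tors}}\to M\to M/M_{\mathrm{tors}}\to 0$ then reduces the problem to the case $M\cong\widetilde{\mathbf{A}}_{L}^{d}$. Fix a basis $e_{1},\dots,e_{d}$ and let $A\in GL_{d}(\widetilde{\mathbf{A}}_{L})$ be the matrix of $\varphi$ (so that a change of basis by $P$ replaces $A$ by $P^{-1}A\varphi(P)$); expand $A=\sum_{k\ge 0}\pi^{k}\tau(a_{k})$ with $a_{k}\in M_{d}(F)$, so that $a_{0}=\bar A\in GL_{d}(F)$ by \'etaleness. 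Let $c$ be the constant supplied by Lemma~\ref{lem: estimate} applied to the matrix $a_{0}^{-1}$.

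I would then construct inductively matrices $P_{n}=I+\pi^{n}\tau(U_{n})$ with $U_{n}\in M_{d}(F)$, and set $Q_{n}=P_{n}\cdots P_{1}$ and $A^{(n)}=Q_{n}^{-1}A\varphi(Q_{n})$. Because $P_{n}\equiv I\pmod{\pi^{n}}$, a direct computation gives
\[
A^{(n)}\equiv A^{(n-1)}+\pi^{n}\tau\bigl(a_{0}\varphi(U_{n})-U_{n}a_{0}\bigr)\pmod{\pi^{n+1}},
\]
so this step leaves the earlier digits $a_{k}^{(n-1)}$ for $k<n$ unchanged and modifies only the level-$n$ digit, turning it into $a_{n}^{(n-1)}+a_{0}\varphi(U_{n})-U_{n}a_{0}$. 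Applying Lemma~\ref{lem: estimate} to the matrix $a_{0}^{-1}$ with input $B=-a_{n}^{(n-1)}a_{0}^{-1}$ produces $U_{n},V_{n}\in M_{d}(F)$ with $|V_{n}|\le c$ satisfying $a_{0}\varphi(U_{n})a_{0}^{-1}-U_{n}=-a_{n}^{(n-1)}a_{0}^{-1}-V_{n}$; right-multiplication by $a_{0}$ shows that with this choice the new digit at level $n$ equals $-V_{n}a_{0}$, of norm at most $c|a_{0}|$. Crucially, the same matrix $a_{0}$ (and thus the same constant $c$) is used at every stage, which is what converts a merely $\pi$-adic approximation scheme into an overconvergence statement.

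The partial products $Q_{n}$ converge $\pi$-adically to a limit $Q\in I+\pi M_{d}(\widetilde{\mathbf{A}}_{L})\subset GL_{d}(\widetilde{\mathbf{A}}_{L})$, and accordingly $A^{(n)}\to A^{(\infty)}=Q^{-1}A\varphi(Q)$. By the uniform bound, the digits satisfy $a_{0}^{(\infty)}=a_{0}$ and $|a_{k}^{(\infty)}|\le c|a_{0}|$ for all $k\ge 1$, whence $|A^{(\infty)}|_{r}\le\max\bigl(|a_{0}|^{r},(c|a_{0}|)^{r}\bigr)<\infty$ for every $0<r<\infty$, placing $A^{(\infty)}$ in $M_{d}(\widetilde{\mathbf{A}}_{L}^{(0,\infty)})\subset M_{d}(\widetilde{\mathbf{A}}_{L}^{\dagger})$. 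Since $\det A^{(\infty)}\equiv\det a_{0}\ne 0\pmod\pi$ and $\widetilde{\mathbf{A}}_{L}^{\dagger}$ is a DVR with residue field $F$, in fact $A^{(\infty)}\in GL_{d}(\widetilde{\mathbf{A}}_{L}^{\dagger})$. Defining $M^{\dagger}$ as the $\widetilde{\mathbf{A}}_{L}^{\dagger}$-span of the new basis $e'=eQ$ then gives the desired \'etale overconvergent $\varphi$-module, with $\widetilde{\mathbf{A}}_{L}\otimes_{\widetilde{\mathbf{A}}_{L}^{\dagger}}M^{\dagger}=M$ by construction. The main obstacle, and the heart of the argument, is the uniformity of the correction at each stage: it is precisely because the constant $c$ in Lemma~\ref{lem: estimate} depends only on $a_{0}$ (and not on the possibly unbounded intermediate digits $a_{n}^{(n-1)}$) that the limit matrix lands inside the overconvergent subring.
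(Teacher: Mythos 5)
Your proposal is correct and follows essentially the same successive-approximation argument as the paper: reduce to the free case, expand in Teichm\"uller digits, and use Lemma \ref{lem: estimate} (with constant depending only on $a_{0}$) to correct the level-$n$ digit at each stage so that all digits of the conjugated matrix are uniformly bounded, landing it in $GL_{d}(\widetilde{\mathbf{A}}_{L}^{(0,\infty)})$. The only difference is organizational (you build the change of basis as a convergent product $\prod(I+\pi^{n}\tau(U_{n}))$ rather than as a sum $\sum\pi^{n}\tau(U_{n})$), which is immaterial.
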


\begin{rem*}
We actually prove a stronger statement, that there exists an étale
$\varphi$-module $M^{(0,\infty)}$ over $\widetilde{\mathbf{A}}_{L}^{(0,\infty)}$
for which $\widetilde{\mathbf{A}}_{L}\otimes_{\widetilde{\mathbf{A}}_{L}^{(0,\infty)}}M^{(0,\infty)}\to M$
is an isomorphism. Note that $\widetilde{\mathbf{A}}_{L}^{(0,\infty)}$
is stable under $\varphi,$ but $\widetilde{\mathbf{A}}_{L}^{(0,r)}$
for $0<r<\infty$ is not.
\end{rem*}
\begin{proof}
Since $\mathbf{\widetilde{A}}_{L}^{\dagger}/\pi^{n}\mathbf{\widetilde{A}}_{L}^{\dagger}=\mathbf{\widetilde{A}}_{L}/\pi^{n}\mathbf{\widetilde{A}}_{L},$
if $M$ is torsion we can take $M^{\dagger}=M.$ Suppose we prove
the Lemma when $M$ is torsion-free. We can then consider the exact
sequence
\[
0\to M_{tor}\to M\overset{pr}{\to}N\to0
\]
where $N$ is torsion-free and let $M^{\dagger}=pr^{-1}(N^{\dagger}).$
We may therefore assume that $M$ is freely generated by $e_{1},\dots,e_{d}$
over $\mathbf{\widetilde{A}}_{L}$ and that the matrix of $\varphi$
in this basis is given by $A=(a_{ij})\in GL_{d}(\mathbf{\widetilde{A}}_{L}),$
i.e.
\[
\varphi(e_{j})=\sum_{i=1}^{d}a_{ij}e_{i}.
\]
If $U=(u_{ij})\in GL_{d}(\mathbf{\widetilde{A}}_{L})$ and $e_{j}'=\sum_{i=1}^{d}u_{ij}e_{i}$
then the matrix of $\varphi$ in the basis $\{e_{j}'\}$ is $U^{-1}A\varphi(U).$
Our goal is to find $U$ such that $C=U^{-1}A\varphi(U)\in GL_{d}(\mathbf{\widetilde{A}}_{L}^{\dagger}),$
because then we can take $M^{\dagger}=\sum_{j=1}^{d}\mathbf{\widetilde{A}}_{L}^{\dagger}e_{j}'$.
We shall in fact find a $U$ such that $C\in GL_{d}(\mathbf{\widetilde{A}}_{L}^{(0,\infty)})$
(see the remark preceding the proof).

Let us write $A=\sum_{n=0}^{\infty}\pi^{n}\tau(A_{n})$ where $A_{n}\in M_{d}(F)$
and $\tau(A_{n})$ is the matrix obtained by taking the Teichmüller
representatives of the entries of $A_{n}$ one-by-one. Note that $A_{0}\in GL_{d}(F).$
Similarly write $U=\sum_{n=0}^{\infty}\pi^{n}\tau(U_{n})$ and
\[
C=U^{-1}A\varphi(U)=\sum_{n=0}^{\infty}\pi^{n}\tau(C_{n}).
\]
It is enough to construct $U$ so that $|C_{n}|_{\flat}$ is bounded,
as the entries of $C$ will then lie in $\mathbf{\widetilde{A}}_{L}^{(0,\infty)}.$
Recall that the norm of a matrix with entries from $F$ is defined
to be the maximum of the norms of its entries.

Let $U_{0}=I$ and suppose $U_{0},\dots,U_{n-1}$ $(n\ge1)$ have
been defined. Let $U'=\sum_{i=0}^{n-1}\pi^{i}\tau(U_{i})$, and $C_{0}=A_{0},C_{1},\dots,C_{n-1}\in M_{d}(F)$
the matrices such that
\[
U'^{-1}A\varphi(U')\equiv\sum_{i=0}^{n-1}\pi^{i}\tau(C_{i})\mod\pi^{n}.
\]
Write $U'^{-1}A\varphi(U')-\sum_{i=0}^{n-1}\pi^{i}\tau(C_{i})=\pi^{n}B$
and look for $U_{n}\in M_{d}(F)$ so that
\[
(U'+\pi^{n}\tau(U_{n}))^{-1}A\varphi(U'+\pi^{n}\tau(U_{n}))\equiv\sum_{i=0}^{n}\pi^{i}\tau(C_{i})\mod\pi^{n+1}
\]
with $|C_{n}|_{\flat}$ small. If we denote by $\overline{B}\in M_{d}(F)$
the reduction of $B$ modulo $\pi$, the above equation is equivalent
to
\[
U_{n}-A_{0}\varphi(U_{n})A_{0}^{-1}=\overline{B}A_{0}^{-1}-C_{n}A_{0}^{-1}.
\]
Lemma \ref{lem: estimate} guarantees that $U_{n}$ can be chosen
so that $|C_{n}A_{0}^{-1}|_{\flat},$ hence also $|C_{n}|_{\flat}$,
is bounded uniformly in $n.$ The bound depends only on $A_{0}$.
This concludes the induction step, and with it the proof of the Lemma.
\end{proof}
The next lemma is a manifestation of the ``contracting'' property
of Frobenius.
\begin{lem}
Let $A\in GL_{d}(\mathbf{\widetilde{A}}_{L}^{\dagger}),\,\,B\in GL_{e}(\mathbf{\widetilde{A}}_{L}^{\dagger})$
and $U\in M_{d\times e}(\mathbf{\widetilde{A}}_{L})$ satisfy
\[
A\cdot\varphi(U)=U\cdot B.
\]
Then $U\in M_{d\times e}(\mathbf{\widetilde{A}}_{L}^{\dagger}).$
\end{lem}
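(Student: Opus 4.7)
The plan is to expand $U = \sum_{n \ge 0}\pi^{n}\tau(u_{n})$ in Teichm\"uller coordinates (with $u_{n} \in M_{d\times e}(F)$) and prove by induction on $n$ a bound of the form $|u_{n}|_{\flat} \le K D^{n}$ for suitable finite constants $K,D$; since $|U|_{r} = \sup_{n} q^{-n}|u_{n}|_{\flat}^{r}$, this forces $|U|_{r}<\infty$ for every $r<(\log q)/(\log D)$, so that $U \in M(\widetilde{\mathbf{A}}_{L}^{(0,r)}) \subset M(\widetilde{\mathbf{A}}_{L}^{\dagger})$. First fix $r_{0}>0$ so small that $c:=\max(|A|_{r_{0}},|A^{-1}|_{r_{0}},|B|_{r_{0}},|B^{-1}|_{r_{0}})<\infty$. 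Reducing $A\varphi(U)=UB$ modulo $\pi$ gives the base case: in $M_{d\times e}(F)$ one has $A_{0}\varphi(u_{0})=u_{0}B_{0}$, whence $|u_{0}|_{\flat}^{q}\le c^{2}|u_{0}|_{\flat}$ and so $|u_{0}|_{\flat}\le c^{2/(q-1)}$.

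For the inductive step, put $U_{n}:=\sum_{k<n}\pi^{k}\tau(u_{k})$, which lies in $M_{d\times e}(\widetilde{\mathbf{A}}_{L}^{\dagger})$ as a finite Witt sum. Set $Y_{n}:=(U-U_{n})/\pi^{n}\in M_{d\times e}(\widetilde{\mathbf{A}}_{L})$ and $S_{n}:=\pi^{-n}(U_{n}B-A\varphi(U_{n}))$; using $U=U_{n}+\pi^{n}Y_{n}$ in the equation $A\varphi(U)=UB$ shows first that $U_{n}B-A\varphi(U_{n})\equiv0\pmod{\pi^{n}}$ (so $S_{n}$ is indeed in $M_{d\times e}(\widetilde{\mathbf{A}}_{L}^{\dagger})$), and second that $A\varphi(Y_{n})-Y_{n}B=S_{n}$. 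Reducing modulo $\pi$ produces the crucial Frobenius-linear equation
\[
A_{0}\varphi(u_{n})-u_{n}B_{0}=\bar{S}_{n}\quad\text{in }M_{d\times e}(F),
\]
where $\bar{S}_{n}:=S_{n}\bmod\pi$. A direct non-archimedean case analysis (of the same flavour as the one underlying Lemma \ref{lem: estimate}) gives the estimate
\[
|u_{n}|_{\flat}\le\max\bigl((c\,|\bar{S}_{n}|_{\flat})^{1/q},\;c^{2/(q-1)}\bigr);
\]
the $1/q$-exponent in the first alternative is precisely the ``contracting'' property of Frobenius advertised before the lemma.

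It remains to bound $|\bar{S}_{n}|_{\flat}$. One uses $|\bar{S}_{n}|_{\flat}\le|S_{n}|_{r_{0}}^{1/r_{0}}$, $|S_{n}|_{r_{0}}=q^{n}|U_{n}B-A\varphi(U_{n})|_{r_{0}}$, the ultrametric submultiplicativity $|U_{n}B-A\varphi(U_{n})|_{r_{0}}\le c\max(|U_{n}|_{r_{0}},|U_{n}|_{qr_{0}})$, and the inductive bounds $|u_{k}|_{\flat}\le KD^{k}$ for $k<n$ to estimate $|U_{n}|_{r_{0}}$ and $|U_{n}|_{qr_{0}}$ in terms of $K,D$. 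Plugging everything back in, the arithmetic yields an estimate of the form $(c|\bar{S}_{n}|_{\flat})^{1/q}\le K\cdot\bigl(c^{(r_{0}+1)/(qr_{0})}q^{1/(qr_{0})}\bigr)^{n}$, so taking $D:=c^{(r_{0}+1)/(qr_{0})}q^{1/(qr_{0})}$ closes the induction (and absorbing $c^{2/(q-1)}$ into $K$ handles the second alternative). The main obstacle is this quantitative balancing: the factor $q^{n}$ coming from $\pi^{-n}$ in $S_{n}$ must be offset by the $1/q$-saving produced by the Frobenius contraction, and without the latter one would be left with a double-exponentially growing bound on $|u_{n}|_{\flat}$; it is the presence of the $1/q$-exponent that reduces this to a single-exponential bound and finishes the proof.
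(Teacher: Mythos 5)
Your proof is correct, but it follows a genuinely different route from the paper's. You run a forward induction on the Teichm\"uller digits $u_{n}$ of $U$, solving at each level the Frobenius-semilinear equation $A_{0}\varphi(u_{n})-u_{n}B_{0}=\bar{S}_{n}$ and balancing the loss of $q^{n}$ coming from $\pi^{-n}$ against the gain of a $q$-th root coming from $|\varphi(u_n)|_\flat=|u_n|_\flat^q$; the arithmetic does close with your choice of $D$, the only blemish being a harmless constant slip (the reductions satisfy $|A_{0}^{-1}|_{\flat}\le c^{1/r_{0}}$ and $|B_{0}|_{\flat}\le c^{1/r_{0}}$ rather than $\le c$, which one fixes by enlarging $c$). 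The paper instead iterates the relation backwards, writing $U=A_{N}^{-1}\varphi^{-N}(U)B_{N}$ with $|A_{N}^{-1}|_{r}$ and $|B_{N}|_{r}$ bounded independently of $N$, applies this to the truncations $U^{(n)}$ modulo $\pi^{n}$, and lets $N\to\infty$ using $\lim_{N}|\varphi^{-N}(\xi)|_{\flat}=1$; this gives the uniform bound $|U^{(n)}|_{r}\le c$ with no digit-by-digit bookkeeping. The trade-off is quantitative: the paper's argument shows $U$ has entries in the \emph{same} ring $\widetilde{\mathbf{A}}_{L}^{(0,r)}$ as $A$ and $B$ (this sharper fact is recorded in the remark following the lemma and is what makes the case $r=\infty$ work in Lemma \ref{lem:M_dagger}), whereas your induction only yields $U\in M_{d\times e}(\widetilde{\mathbf{A}}_{L}^{(0,r')})$ for the smaller radius $r'=(\log q)/(\log D)$ depending on $c$ --- enough for the lemma as stated, but not for the remark.
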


\begin{rem*}
If $A$ and $B$ have entries in $\mathbf{\widetilde{A}}_{L}^{(0,r)}$
for some $0<r\le\infty,$ the proof will show that so does $U.$
\end{rem*}
\begin{proof}
Let $0<r\le\infty$ be such that $A$ and $B$ have entries in $\mathbf{\widetilde{A}}_{L}^{(0,r)}.$
We let $|B|_{r}$ be the maximum of $|b_{ij}|_{r}$ where $b_{ij}$
are the entries of $B$, and similarly for $A$. Suppose that $|B|_{r}\le1.$
Then for $i\ge1$ $|\varphi^{-i}(B)|_{r}\le1$, and for any integer
$N$
\[
|\varphi^{-N}(B)\cdots\varphi^{-2}(B)\varphi^{-1}(B)|_{r}\le|\varphi^{-N}(B)|_{r}\cdots|\varphi^{-2}(B)|_{r}|\varphi^{-1}(B)|_{r}\le1.
\]
In general, $B$ can be written as $\tau(\beta)B_{0}$ with $\beta\in F$
and $|B_{0}|_{r}\le1$, so (if $r<\infty$)
\[
|\varphi^{-N}(B)\cdots\varphi^{-2}(B)\varphi^{-1}(B)|_{r}\le|\beta|_{\flat}^{r(q^{-1}+\cdots+q^{-N})}
\]
is bounded independently of $N$, and similarly (with another bound)
if $r=\infty.$

From the equation $U=(\varphi^{-1}(A))^{-1}\cdot\varphi^{-1}(U)\cdot\varphi^{-1}(B)$
we get by iteration
\[
U=(\varphi^{-1}(A))^{-1}(\varphi^{-2}(A))^{-1}\cdots(\varphi^{-N}(A))^{-1}\cdot\varphi^{-N}(U)\cdot\varphi^{-N}(B)\cdots\varphi^{-2}(B)\varphi^{-1}(B),
\]
which we write as $U=A_{N}^{-1}\varphi^{-N}(U)B_{N}$ with $|A_{N}^{-1}|_{r}$
and $|B_{N}|_{r}$ bounded independently of $N$. Let $U^{(n)}$ be
the truncation of $U$ modulo $\pi^{n},$ i.e. if $U=\sum_{i=0}^{\infty}\pi^{i}\tau(U_{i})$,
then $U^{(n)}=\sum_{i=0}^{n-1}\pi^{i}\tau(U_{i})$. We then have
\[
U^{(n)}\equiv A_{N}^{-1}\cdot\varphi^{-N}(U^{(n)})\cdot B_{N}\mod\pi^{n}.
\]
Fixing $n$ and choosing $N$ large we can make $|A_{N}^{-1}\cdot\varphi^{-N}(U^{(n)})\cdot B_{N}|_{r}\le c$
where $c$ is a constant depending only of $A$ and $B$. But if $V=\sum_{i=0}^{\infty}\pi^{i}\tau(V{}_{i})$
is a matrix then
\[
|V|_{r}=\sup_{i}\{q^{-i}|V_{i}|_{\flat}^{r}\}
\]
if $r<\infty$ and $|V|_{\infty}=\sup_{i}\{|V_{i}|_{\flat}\}$, so
clearly $|V^{(n)}|_{r}\le|V|_{r}$. We conclude that $|U^{(n)}|_{r}\le c.$
As this is true for every $n,$ $|U|_{r}\le c<\infty,$ as was to
be shown.
\end{proof}
\begin{cor}
\label{cor:uniqueness}The submodule $M^{\dagger}$ whose existence
is guaranteed by Lemma \ref{lem:M_dagger} is unique.
\end{cor}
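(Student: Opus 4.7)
The plan is to deduce uniqueness directly from the preceding contracting-Frobenius Lemma. Let $M^{\dagger}_{1},M^{\dagger}_{2}\subset M$ be two overconvergent \'etale $\varphi$-submodules, each satisfying the base-change conclusion of Lemma \ref{lem:M_dagger}.

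First I would reduce to the case when $M$ is torsion-free (hence free over $\mathbf{\widetilde{A}}_{L}$). Since $\mathbf{\widetilde{A}}_{L}^{\dagger}/\pi^{n}\mathbf{\widetilde{A}}_{L}^{\dagger}=\mathbf{\widetilde{A}}_{L}/\pi^{n}\mathbf{\widetilde{A}}_{L}$ for every $n\ge1$, extension of scalars from $\mathbf{\widetilde{A}}_{L}^{\dagger}$ to $\mathbf{\widetilde{A}}_{L}$ is the identity on torsion modules. Decomposing each finitely generated $M^{\dagger}_{i}$ over the DVR $\mathbf{\widetilde{A}}_{L}^{\dagger}$ as the direct sum of its free part and its torsion part, the torsion summand maps isomorphically to $M_{tor}$ under the base change, so in fact coincides with $M_{tor}$ as a subset of $M$. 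Consequently $M^{\dagger}_{i}/M_{tor}$ is a free overconvergent \'etale $\varphi$-module whose base change is the free module $M/M_{tor}$, and it suffices to prove the uniqueness for this quotient.

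Now assume $M$ is free of rank $d$. I would pick bases $\{e^{(i)}_{j}\}_{j=1}^{d}$ of $M^{\dagger}_{i}$ for $i=1,2$; both are then $\mathbf{\widetilde{A}}_{L}$-bases of $M$. Let $A_{i}\in GL_{d}(\mathbf{\widetilde{A}}_{L}^{\dagger})$ be the matrix of $\varphi_{M}$ in the basis $\{e^{(i)}_{j}\}$ and let $U\in GL_{d}(\mathbf{\widetilde{A}}_{L})$ be the change-of-basis matrix from $\{e^{(2)}_{j}\}$ to $\{e^{(1)}_{j}\}$. Comparing the two expressions for $\varphi_{M}$ yields
\[
A_{1}\cdot\varphi(U)=U\cdot A_{2},
\]
which is exactly the hypothesis of the preceding Lemma on the contracting property of Frobenius. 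That Lemma forces $U\in M_{d}(\mathbf{\widetilde{A}}_{L}^{\dagger})$. Rewriting the same identity as $A_{2}\cdot\varphi(U^{-1})=U^{-1}\cdot A_{1}$ and invoking the Lemma once more gives $U^{-1}\in M_{d}(\mathbf{\widetilde{A}}_{L}^{\dagger})$. Hence $U\in GL_{d}(\mathbf{\widetilde{A}}_{L}^{\dagger})$, and $M^{\dagger}_{1}=M^{\dagger}_{2}$ as submodules of $M$.

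All the substantive work is done by the contracting Frobenius Lemma; the only thing to verify directly is the d\'evissage. The one slightly delicate point there is not just that the torsion of $M^{\dagger}_{i}$ is abstractly isomorphic to $M_{tor}$, but that the two agree as submodules of $M$, which is precisely what permits the reduction to the torsion-free case to go through.
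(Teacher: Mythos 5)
Your argument is correct and is essentially the paper's own proof: reduce to the torsion-free case, express the change of basis between the two overconvergent submodules by a matrix $U\in GL_{d}(\mathbf{\widetilde{A}}_{L})$ satisfying $A_{1}\varphi(U)=UA_{2}$, and invoke the contracting-Frobenius lemma. Your extra care with the torsion dévissage and with getting $U^{-1}$ overconvergent (which the paper leaves implicit; alternatively it follows from $U\in M_{d}(\mathbf{\widetilde{A}}_{L}^{\dagger})$ together with the fact that $\mathbf{\widetilde{A}}_{L}^{\dagger}$ is a DVR in which units of $\mathbf{\widetilde{A}}_{L}$ remain units) only makes the writeup more complete.
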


\begin{proof}
As usual we may assume that $M$ is torsion-free. Let $M^{\dagger}$
be freely generated over $\mathbf{\widetilde{A}}_{L}^{\dagger}$ by
$e_{1},\dots,e_{d}$ and suppose $e_{j}'=\sum_{i=1}^{d}u_{ij}e_{i}$
span another $\varphi$-stable $\mathbf{\widetilde{A}}_{L}^{\dagger}$-submodule
where $U=(u_{ij})\in GL_{d}(\mathbf{\widetilde{A}}_{L}).$ Our goal
is to show that $U\in GL_{d}(\mathbf{\widetilde{A}}_{L}^{\dagger}).$
Letting $A$ and $B$ be the matrices from $GL_{d}(\mathbf{\widetilde{A}}_{L}^{\dagger})$
expressing $\varphi$ in the bases $\{e_{i}\}$ and $\{e'_{i}\}$
respectively, we get the relation $A\cdot\varphi(U)=U\cdot B,$ hence
the corollary follows from the lemma.
\end{proof}
\begin{cor}
\label{cor:phi_invariants}Let $M$ and $M^{\dagger}$ be as in Lemma
\ref{lem:M_dagger}. Then
\[
M^{\varphi}=(M^{\dagger})^{\varphi}.
\]
\end{cor}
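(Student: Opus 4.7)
The containment $(M^\dagger)^\varphi \subseteq M^\varphi$ is tautological, so the content is the reverse inclusion $M^\varphi \subseteq M^\dagger$. The plan is to reduce to the torsion-free case and then apply the ``contracting Frobenius'' lemma established just before Corollary \ref{cor:uniqueness}.

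For the reduction, I use the same d\'evissage as in the proof of Lemma \ref{lem:M_dagger}. If $M$ is $\pi$-torsion, then $M^\dagger = M$ and there is nothing to prove. In general, consider the exact sequence
\[
0 \to M_{\text{tor}} \to M \xrightarrow{\mathrm{pr}} N \to 0
\]
with $N$ torsion-free, recalling that $M^\dagger = \mathrm{pr}^{-1}(N^\dagger)$. Since $\mathrm{pr}$ is $\varphi$-equivariant, it sends $M^\varphi$ into $N^\varphi$, and if the torsion-free case is known we get $\mathrm{pr}(M^\varphi) \subseteq N^\varphi \subseteq N^\dagger$, whence $M^\varphi \subseteq \mathrm{pr}^{-1}(N^\dagger) = M^\dagger$.

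Thus assume $M$ is torsion-free. Choose an $\widetilde{\mathbf{A}}_L^\dagger$-basis $e_1, \dots, e_d$ of $M^\dagger$, which by Lemma \ref{lem:M_dagger} is also an $\widetilde{\mathbf{A}}_L$-basis of $M$, and let $A = (a_{ij}) \in GL_d(\widetilde{\mathbf{A}}_L^\dagger)$ be the matrix of $\varphi$ in this basis (invertibility uses \'etaleness of $M^\dagger$). An element $m = \sum_i u_i e_i \in M$ with column vector $U = (u_i) \in M_{d \times 1}(\widetilde{\mathbf{A}}_L)$ lies in $M^\varphi$ precisely when $U = A \cdot \varphi(U)$, which we rewrite as
\[
A \cdot \varphi(U) = U \cdot B, \quad B = 1 \in GL_1(\widetilde{\mathbf{A}}_L^\dagger).
\]
This is exactly the hypothesis of the contracting Frobenius lemma (with $e = 1$), whose conclusion is $U \in M_{d \times 1}(\widetilde{\mathbf{A}}_L^\dagger)$, i.e.\ $m \in M^\dagger$.

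There is no serious obstacle: once one observes that invariance under $\varphi$ is literally the matrix equation $A \varphi(U) = U \cdot 1$ to which the preceding lemma applies, the corollary is immediate, and the torsion d\'evissage is exactly the one already used for Lemma \ref{lem:M_dagger}.
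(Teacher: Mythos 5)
Your proof is correct and follows essentially the same route as the paper: reduce to the torsion-free case and then apply the contracting-Frobenius lemma to the matrix equation $A\varphi(U)=U\cdot 1$ in the basis coming from $M^{\dagger}$. Your d\'evissage is in fact slightly slicker than the paper's (which runs a four-lemma diagram chase on the $\varphi$-invariant exact sequences): by noting that the whole content is the containment $M^{\varphi}\subseteq M^{\dagger}=\mathrm{pr}^{-1}(N^{\dagger})$, you bypass the diagram entirely.
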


\begin{proof}
We first reduce to the case where $M$ is torsion free. Let $N=M/M_{tor}$
and $N^{\dagger}=M^{\dagger}/M_{tor}^{\dagger}.$ Recall that $M_{tor}^{\dagger}=M_{tor}$.
Consider the commutative diagram

\[
\xymatrix{0\ar[r] & M_{tor}^{\dagger\varphi}\ar[r]\ar@{=}[d] & M^{\dagger\varphi}\ar[r]\ar[d] & N^{\dagger\varphi}\ar[r]\ar[d] & M_{tor}^{\dagger}/(\varphi-1)\ar@{=}[d]\\
0\ar[r] & M_{tor}^{\varphi}\ar[r] & M^{\varphi}\ar[r] & N^{\varphi}\ar[r] & M_{tor}/(\varphi-1)
}
\]
whose rows are exact. If $N^{\varphi}=(N^{\dagger})^{\varphi}$ then
the same would hold with $M.$

Assume therefore that $M$ is torsion-free, let $e_{1},\dots,e_{d}$
be a basis of $M^{\dagger}$ and $A\in GL_{d}(\mathbf{\widetilde{A}}_{L}^{\dagger})$
the matrix of $\varphi$ in this basis. If $m=\sum_{i=1}^{d}u_{i}e_{i}\in M^{\varphi}$
the coordinate vector $u$ satisfies
\[
A\cdot\varphi(u)=u
\]
so by the lemma $u_{i}\in\mathbf{\widetilde{A}}_{L}^{\dagger}$ and
$m\in(M^{\dagger})^{\varphi}.$
\end{proof}
We can summarize the discussion so far in the following proposition.
\begin{prop}
\label{prop:o/c for phi modules}(i) Let $\mathrm{Mod}_{\varphi}(L)$
be the category of $\varphi$-modules over $\mathbf{\widetilde{A}}_{L}$
and $\mathrm{Mod}_{\varphi}^{\dagger}(L)$ the category of overconvergent
$\varphi$-modules over $\mathbf{\widetilde{A}}_{L}^{\dagger}$. Then
the functors $M^{\dagger}\mapsto M:=\mathbf{\widetilde{A}}_{L}\otimes_{\mathbf{\widetilde{A}}_{L}^{\dagger}}M^{\dagger}$
and $M\mapsto M^{\dagger}$ (given by Lemma \ref{lem:M_dagger}) induce
an equivalence of categories between the full subcategories $\mathrm{Mod}_{\varphi}^{\acute{e}t}(L)$
and $\mathrm{Mod}_{\varphi}^{\dagger\acute{e}t}(L)$ of étale submodules.

(ii) Let $V\in\mathrm{Rep}_{\mathcal{O}}(F)$ be a continuous representation
of $H=Gal(F^{sep}/F)$ on a finitely generated $\mathcal{O}$-module
$V$. If $M=\mathcal{D}(V):=(\mathbf{\widetilde{A}}\otimes_{\mathcal{O}}V)^{H}$
is the corresponding étale $\varphi$-module, then $M^{\dagger}=\mathcal{D}^{\dagger}(V):=(\mathbf{\widetilde{A}^{\dagger}}\otimes_{\mathcal{O}}V)^{H}$.
\end{prop}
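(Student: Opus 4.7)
The plan is to deduce both parts from Lemma~\ref{lem:M_dagger}, Corollary~\ref{cor:uniqueness}, and the contracting lemma (the lemma preceding Corollary~\ref{cor:uniqueness}), which we shall apply in two different settings: over $\widetilde{\mathbf{A}}_L^\dagger$ for (i) and over $\widetilde{\mathbf{A}}^\dagger$ for (ii). Throughout I reduce to the torsion-free case by d\'evissage, using that $\widetilde{\mathbf{A}}_L^\dagger/\pi^n = \widetilde{\mathbf{A}}_L/\pi^n$ (so torsion modules match on both sides) and the analogous identity for $\widetilde{\mathbf{A}}^\dagger$.

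For part (i), base change $M^\dagger\mapsto \widetilde{\mathbf{A}}_L\otimes_{\widetilde{\mathbf{A}}_L^\dagger}M^\dagger$ is clearly well-defined on objects and preserves \'etaleness, while the opposite direction $M\mapsto M^\dagger$ is furnished by Lemma~\ref{lem:M_dagger} and is well-defined on objects by Corollary~\ref{cor:uniqueness}. The two compositions are naturally isomorphic to the identity: on the overconvergent side, $M^\dagger$ is itself a $\varphi$-stable $\widetilde{\mathbf{A}}_L^\dagger$-submodule of $\widetilde{\mathbf{A}}_L\otimes M^\dagger$ producing this module after extension of scalars, so Corollary~\ref{cor:uniqueness} returns it back; on the other side, Lemma~\ref{lem:M_dagger} gives $\widetilde{\mathbf{A}}_L\otimes_{\widetilde{\mathbf{A}}_L^\dagger}M^\dagger = M$ by construction. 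It remains to show that any morphism $\alpha\colon M_1\to M_2$ of \'etale $\varphi$-modules restricts to a morphism $M_1^\dagger\to M_2^\dagger$. Reducing to the torsion-free (hence free) case, pick $\widetilde{\mathbf{A}}_L^\dagger$-bases of $M_i^\dagger$, so that $\varphi$ is given by matrices $A_i\in GL_{d_i}(\widetilde{\mathbf{A}}_L^\dagger)$, and let $U\in M_{d_2\times d_1}(\widetilde{\mathbf{A}}_L)$ be the matrix of $\alpha$. The identity $\alpha\circ\varphi_{M_1}=\varphi_{M_2}\circ\alpha$ translates to $A_2\varphi(U)=UA_1$, and the contracting lemma forces $U$ to have entries in $\widetilde{\mathbf{A}}_L^\dagger$.

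For part (ii), set $M=\mathcal{D}(V)$ and let $M^\dagger\subset M$ be the unique overconvergent $\varphi$-submodule. The claim to prove is that, as submodules of $\widetilde{\mathbf{A}}\otimes_{\mathcal{O}}V$, one has $(\widetilde{\mathbf{A}}^\dagger\otimes_{\mathcal{O}}V)^H = M^\dagger$. After reducing to the case where $V$ is $\mathcal{O}$-free of rank $d$, pick a basis $v_1,\dots,v_d$ of $V$ and a basis $e_1,\dots,e_d$ of $M^\dagger$; via $ad_V$ the latter is also an $\widetilde{\mathbf{A}}$-basis of $\widetilde{\mathbf{A}}\otimes V$. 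Write $v_j=\sum_i u_{ij}e_i$ with $U=(u_{ij})\in GL_d(\widetilde{\mathbf{A}})$. The relation $\varphi(v_j)=v_j$ becomes the matrix identity $A\varphi(U)=U$, where $A\in GL_d(\widetilde{\mathbf{A}}_L^\dagger)$ is the matrix of $\varphi_{M^\dagger}$. The contracting lemma applied over $\widetilde{\mathbf{A}}^\dagger$ (its proof goes through verbatim, depending only on the norms $|\cdot|_r$ and the Frobenius contraction, which are defined on $W(F^{sep})_L$ exactly as on $W(F)_L$) gives $U\in M_d(\widetilde{\mathbf{A}}^\dagger)$; and since $\det U$ is a unit in $\widetilde{\mathbf{A}}$ while $\pi^n\widetilde{\mathbf{A}}^\dagger = \widetilde{\mathbf{A}}^\dagger\cap \pi^n\widetilde{\mathbf{A}}$, we get $U\in GL_d(\widetilde{\mathbf{A}}^\dagger)$. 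Hence $\widetilde{\mathbf{A}}^\dagger\otimes_{\mathcal{O}}V = \widetilde{\mathbf{A}}^\dagger\otimes_{\widetilde{\mathbf{A}}_L^\dagger}M^\dagger$ as submodules of $\widetilde{\mathbf{A}}\otimes V$. Taking $H$-invariants, using $(\widetilde{\mathbf{A}}^\dagger)^H=\widetilde{\mathbf{A}}_L^\dagger$ (from $\widetilde{\mathbf{A}}_L^\dagger = \widetilde{\mathbf{A}}_L\cap\widetilde{\mathbf{A}}^\dagger$ and $\widetilde{\mathbf{A}}^H=\widetilde{\mathbf{A}}_L$) and the freeness of $M^\dagger$, the right-hand side becomes $M^\dagger$, establishing the claim.

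The main obstacle is checking that the contracting lemma transfers from the setting $\widetilde{\mathbf{A}}_L^\dagger\subset\widetilde{\mathbf{A}}_L$ to $\widetilde{\mathbf{A}}^\dagger\subset\widetilde{\mathbf{A}}$; this is unproblematic because the lemma's proof uses only multiplicativity and Frobenius-contractibility of the $|\cdot|_r$, properties already shared by the extensions to $W(F^{sep})_L$. The remaining ingredients---the torsion reduction, the identification $(\widetilde{\mathbf{A}}^\dagger)^H=\widetilde{\mathbf{A}}_L^\dagger$, and the d\'evissage to the $\mathcal{O}$-free case---are routine once the free-rank-$d$ statement has been secured.
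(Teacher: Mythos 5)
Your proof is correct and follows essentially the same route as the paper's: both parts rest on Lemma \ref{lem:M_dagger}, Corollary \ref{cor:uniqueness}, and the contracting lemma $A\cdot\varphi(U)=U\cdot B$, extended verbatim to $\widetilde{\mathbf{A}}^{\dagger}\subset\widetilde{\mathbf{A}}$ for part (ii). Where the paper packages full faithfulness via internal Homs and Corollary \ref{cor:phi_invariants}, and part (ii) via the uniqueness of $X^{\dagger}$ inside $X=\widetilde{\mathbf{A}}\otimes_{\mathcal{O}}V$, you simply unwind both into explicit matrix computations with the contracting lemma --- the same argument in different clothing.
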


\begin{proof}
(i) Lemma \ref{lem:M_dagger} and Corollary \ref{cor:uniqueness}
imply that the two functors induce a bijection between the objects
of the two categories. The categories $\mathrm{Mod}_{\varphi}^{\acute{e}t}(L)$
and $\mathrm{Mod}_{\varphi}^{\dagger\acute{e}t}(L)$ have tensor products
and internal Hom's (for the latter we need $\varphi$ to be bijective!)
and the two functors commute with them. Since
\[
\mathrm{Hom_{\varphi}}(M,N)=\underline{Hom}(M,N)^{\varphi}
\]
(the first Hom is the group of morphisms in the category, the second
underlined Hom is the internal Hom), and since the same holds in the
overconvergent category, the equality $\mathrm{Hom_{\varphi}}(M,N)=\mathrm{Hom_{\varphi}}(M^{\dagger},N^{\dagger})$
follows from the obvious fact that $\underline{Hom}(M,N)^{\dagger}=\underline{Hom}(M^{\dagger},N^{\dagger})$
and from Corollary \ref{cor:phi_invariants}.

(ii) Lemma \ref{lem:M_dagger} and Corollary \ref{cor:uniqueness}
remain valid, with the same proof, for finitely generated étale $\varphi$-modules
$X$ over $\mathbf{\widetilde{A}}$ and their overconvergent variants
over $\mathbf{\widetilde{A}}^{\dagger}$. Consider
\[
X=\mathbf{\widetilde{A}}\otimes_{\mathbf{\widetilde{A}}_{L}}M=\mathbf{\widetilde{A}}\otimes_{\mathcal{O}}V.
\]
 Both $\mathbf{\widetilde{A}^{\dagger}}\otimes_{\mathbf{\widetilde{A}}_{L}^{\dagger}}M^{\dagger}$
and $\mathbf{\widetilde{A}}^{\dagger}\otimes_{\mathcal{O}}V$ are
finitely generated $\mathbf{\widetilde{A}}^{\dagger}$-submodules
$X^{\dagger}$ of $X$ satisfying
\[
\mathbf{\widetilde{A}}\otimes_{\mathbf{\widetilde{A}}^{\dagger}}X^{\dagger}=X.
\]
By the uniqueness of $X^{\dagger},$ they are equal: $\mathbf{\widetilde{A}^{\dagger}}\otimes_{\mathbf{\widetilde{A}}_{L}^{\dagger}}M^{\dagger}=\mathbf{\widetilde{A}}^{\dagger}\otimes_{\mathcal{O}}V$.
Taking $H$-invariants we get the desired formula
\[
M^{\dagger}=\mathcal{D}^{\dagger}(V).
\]
\end{proof}
We can now conclude the proof of Theorem \ref{thm:o/c}.
\begin{proof}
(of Theorem \ref{thm:o/c}) Assume now that $M^{\dagger}\in\mathrm{Mod}{}_{\varphi,\Gamma}^{\dagger\acute{e}t}(L).$
Then $M=\mathbf{\widetilde{A}}_{L}\otimes_{\mathbf{\widetilde{A}}_{L}^{\dagger}}M^{\dagger}$
carries a semi-linear continuous action of $\Gamma$ commuting with
$\varphi$, hence $M\in\mathrm{Mod}{}_{\varphi,\Gamma}^{\acute{e}t}(L).$
Conversely, if $M\in\mathrm{Mod}{}_{\varphi,\Gamma}^{\acute{e}t}(L)$
and $M^{\dagger}$ is the $\varphi$-submodule constructed in Lemma
\ref{lem:M_dagger}, then for any $\gamma\in\Gamma$ the $\varphi$-submodule
$\gamma(M^{\dagger})$ also satisfies the conditions of the Lemma,
so by uniqueness (Corollary \ref{cor:uniqueness}) $M^{\dagger}$
is stable under $\Gamma.$ The action of $\Gamma$ on $M^{\dagger}$
is clearly continuous in the weak topology inherited from $M$, so
$M^{\dagger}\in\mathrm{Mod}{}_{\varphi,\Gamma}^{\dagger\acute{e}t}(L).$
Finally, the morphisms between two $(\varphi,\Gamma)$-modules are
the morphisms in the category of $\varphi$-modules, which furthermore
commute with $\Gamma$. The same applies to overconvergent $(\varphi,\Gamma)$-modules.
If $M$ and $N$ are étale we have already shown that $\mathrm{Hom}_{\varphi}(M,N)=\mathrm{Hom}_{\varphi}(M^{\dagger},N^{\dagger})$,
hence we also have $\mathrm{Hom}_{\varphi,\Gamma}(M,N)=\mathrm{Hom}_{\varphi,\Gamma}(M^{\dagger},N^{\dagger})$.

Part (ii) of the theorem follows from part (ii) of Proposition \ref{prop:o/c for phi modules},
because the functors $\mathcal{D}$ and $\mathcal{D}^{\dagger}$ do
not involve the $\Gamma$-action.
\end{proof}

\end{document}